\newcommand{\br}{\mathbb{R}}
\newcommand{\varep}{\varepsilon}
\newcommand{\brd}{\mathbb{R}^d}
\newcommand{\wD}{\widetilde{D}}
\newcommand{\wN}{\widetilde{N}}
\newtheorem{thm}{Theorem}[section]
\newtheorem{lemma}[thm]{Lemma}
\newtheorem{prop}[thm]{Proposition}
\newtheorem{remark}[thm]{Remark}
\numberwithin{equation}{section}
\begin{document}

\bibliographystyle{amsplain}

\title{Periodic Homogenization \\ of Green and Neumann Functions}

\author{Carlos E. Kenig\thanks{Supported in part by NSF grant DMS-0968472}
 \and Fanghua Lin \thanks{Supported in part by NSF grant DMS-0700517}
\and Zhongwei Shen\thanks{Supported in part by NSF grant DMS-0855294}}

\date{ }

\maketitle

\begin{abstract}
For a family of second-order elliptic operators with rapidly oscillating 
periodic coefficients,  we study the asymptotic behavior of
the Green and Neumann functions,
using Dirichlet and Neumann correctors.
As a result we obtain asymptotic expansions of Poisson kernels and the Dirichlet-to-Neumann
maps as well as near optimal convergence rates in $W^{1,p}$
for solutions with Dirichlet or Neumann boundary conditions.
\end{abstract}

\section{Introduction}

The main purpose of this paper is to study the asymptotic behavior of
the Green and Neumann functions
for a family of elliptic operators
with rapidly oscillating coefficients.
More precisely, consider
\begin{equation}\label{operator}
\mathcal{L}_\varep =-\text{div} \big( A\left({x}/{\varep}\right)\nabla\big)
=-\frac{\partial }{\partial x_i}
\left[ a_{ij}^{\alpha\beta}\left(\frac{x}{\varep}\right)
\frac{\partial}{\partial x_j} \right], \quad \varep>0
\end{equation}
(the summation convention is used throughout the paper).
We will assume that $A(y)=(a_{ij}^{\alpha\beta} (y))$
with $1\le i,j\le d$ and $1\le \alpha,\beta\le m$ is real and satisfies the
ellipticity condition
\begin{equation}\label{ellipticity}
\mu |\xi|^2 \le a_{ij}^{\alpha\beta} (y) \xi_i^\alpha\xi_j^\beta
\le \frac{1}{\mu} |\xi|^2 \ \ 
\text{ for } y\in \br^d \text{ and } \xi=(\xi_i^\alpha)\in \br^{dm},
\end{equation}
where $\mu>0$, and the periodicity condition
\begin{equation}\label{periodicity}
A(y+z)=A(y) \quad \text{ for } y\in \br^d \text{ and } z\in \mathbb{Z}^d.
\end{equation}
We will also impose the smoothness condition
\begin{equation}\label{smoothness}
|A(x)-A(y)|\le \tau |x-y|^\lambda
\quad \text{ for some }\lambda\in (0,1] \text{ and } \tau\ge 0.
\end{equation}
Let $G_\varep (x,y)=\big(G_\varep^{\alpha\beta}(x,y)\big)$ and
$N_\varep (x,y)=\big(N_\varep^{\alpha\beta}(x,y) \big)$
denote the Green and Neumann functions respectively, for $\mathcal{L}_\varep$
in a bounded domain $\Omega$, with pole at $y$.
We are interested in the asymptotic behavior, as $\varep\to 0$, of
$G_\varep(x,y)$, $N_\varep(x,y)$, $\nabla_x G_\varep (x,y)$ and
$\nabla_x N_\varep (x,y)$ as well as $\nabla_x\nabla_y G_\varep (x,y)$
an $\nabla_x\nabla_y N_\varep (x,y)$.
We shall use $G_0(x,y)$ and $N_0(x,y)$ to denote the Green and
Neumann functions respectively, for the homogenized (effective)
operator $\mathcal{L}_0$ in $\Omega$.

Let $P_j^\beta =x_j (0, \dots, 1, \dots)$
with $1$ in the $\beta^{th}$ position for $1\le j\le d$ and $1\le \beta\le m$.
To state our main results, we need to introduce the matrix of
Dirichlet correctors $\Phi_{\varep, j}^\beta
=\big(\Phi_{\varep, j}^{1\beta}, \dots, \Phi_{\varep, j}^{m\beta}\big)$ in $\Omega$,
defined by
\begin{equation}\label{Dirichlet-corrector}
\mathcal{L}_\varep (\Phi_{\varep, j}^\beta) =0
\quad \text{ in } \Omega
\quad \text{ and } \quad \Phi_{\varep,j}^\beta=P_j^\beta \quad \text{ on } \partial\Omega,
\end{equation}
as well as the matrix of Neumann correctors 
$\Psi_{\varep, j}^\beta
=\big(\Psi_{\varep, j}^{1\beta}, \dots, \Psi_{\varep, j}^{m\beta}\big)$ in $\Omega$, defined by
\begin{equation}\label{Neumann-corrector}
\mathcal{L}_\varep (\Psi_{\varep, j}^\beta) =0
\quad \text{ in } \Omega
\quad \text{ and } \quad \frac{\partial}{\partial\nu_\varep}
\big(\Psi_{\varep,j}^\beta\big)=
\frac{\partial}{\partial\nu_0}
\big(P_j^\beta\big) \quad \text{ on } \partial\Omega.
\end{equation}
Here $\partial/\partial\nu_\varep$ denotes the conormal derivative associated with
$\mathcal{L}_\varep$ for $\varep\ge 0$.

The following are the main results of the paper.

\begin{thm}\label{theorem-A}
Let $\mathcal{L}_\varep =-\text{\rm div}\big(A(x/\varep)\nabla \big)$
with the matrix $A(y)$ satisfying conditions (\ref{ellipticity}),
(\ref{periodicity}) and (\ref{smoothness}).
Then for any $x,y\in \Omega$,
\begin{equation}\label{Green's-size}
|G_\varep (x,y)-G_0 (x,y)|\le \frac{C\varep}{|x-y|^{d-1}}
\end{equation}
if $\Omega$ is a bounded $C^{1,1}$ domain, and
\begin{equation}\label{Green's-derivative}
\big|\frac{\partial}{\partial x_i}
\big\{ G_\varep^{\alpha\beta} (x,y)\big\}
-\frac{\partial}{\partial x_i} \big\{ \Phi_{\varep, j}^{\alpha\gamma} (x)\big\}
\cdot \frac{\partial}{\partial x_j}
\big\{ G_0^{\gamma\beta}(x,y)\big\} \big|
\le \frac{ C\varep \ln \left[\varep^{-1} |x-y|+2\right]}{|x-y|^d}
\end{equation}
if $\Omega$ is a bounded $C^{2,\eta}$ domain
for some $\eta\in (0,1)$,
where $C$ depends only on $d$, $m$, $\mu$, $\tau$, $\lambda$ and $\Omega$.
\end{thm}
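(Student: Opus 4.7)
I would prove both estimates via a two-scale expansion combined with a duality argument against Dirichlet problems for $\mathcal{L}_\varep$, using the standard cell-problem corrector $\chi_j^\beta(y)$ for (\ref{Green's-size}) and the Dirichlet corrector $\Phi_{\varep,j}^\beta$ for (\ref{Green's-derivative}). Set $r=|x-y|$ throughout; the case $r\le C\varep$ of (\ref{Green's-size}) follows from the uniform De Giorgi--Nash bound $|G_\varep|+|G_0|\le C|x-y|^{2-d}$, so I assume $r\ge\varep$.

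\textbf{Size estimate (\ref{Green's-size}).} I would start from the $L^2$-convergence rate
\begin{equation*}
\|u_\varep-u_0\|_{L^2(\Omega)}\le C\varep\,\|f\|_{L^2(\Omega)}
\end{equation*}
for the Dirichlet problems $\mathcal{L}_\varep u_\varep=f=\mathcal{L}_0 u_0$ with zero boundary data; this is obtained by estimating the residual of the ansatz $u_\varep-u_0-\varep\chi_j(\cdot/\varep)\zeta_\varep\partial_j u_0$ (with a cutoff $\zeta_\varep$ supported away from $\partial\Omega$), absorbing the non-divergence term via the flux (skew) corrector, and invoking the $H^2$-regularity of $u_0$ available on $C^{1,1}$ domains. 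Fix $x_0,y_0$ with $r=|x_0-y_0|$, and for $\phi\in C_c^\infty(B(x_0,\eta))$ with $\eta\ll r$ let $v_\varep,v_0$ solve the adjoint problems $\mathcal{L}_\varep^* v_\varep=\phi=\mathcal{L}_0^* v_0$ with zero Dirichlet data; duality then yields
\begin{equation*}
\int_\Omega [G_\varep(x,y_0)-G_0(x,y_0)]^T\phi(x)\,dx = v_\varep(y_0)-v_0(y_0).
\end{equation*}
Since $\phi$ is supported outside $B(y_0,r/2)$, both $v_\varep$ and $v_0$ solve the homogeneous equation there; combining the interior two-scale expansion $v_\varep\approx v_0+\varep\chi_j^*(\cdot/\varep)\partial_j v_0$ (with $L^\infty$-remainder $O(\varep)$ coming from interior regularity of $v_0$) with the Green's-function representation bound $|\nabla v_0(y_0)|\le Cr^{1-d}\|\phi\|_{L^1}$ converts this into $|v_\varep(y_0)-v_0(y_0)|\le C\varep r^{1-d}\|\phi\|_{L^1}$. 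Letting $\phi\to\delta_{x_0}$ yields (\ref{Green's-size}).

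\textbf{Gradient estimate (\ref{Green's-derivative}) and main obstacle.} The Dirichlet corrector $\Phi_{\varep,j}^\beta$ is introduced because $\Phi_{\varep,j}^\beta=P_j^\beta$ on $\partial\Omega$, eliminating the boundary layer present in the standard ansatz $P_j^\beta+\varep\chi_j^\beta(\cdot/\varep)$. The plan is to establish, for the Dirichlet problem $\mathcal{L}_\varep u_\varep=0$ with homogenized solution $u_0$, an $L^\infty$-type two-scale gradient estimate showing that $\nabla u_\varep(x)-\nabla\Phi_{\varep,j}^\beta(x)\,\partial_j u_0^\beta(x)$ is of size $O(\varep)$ times a weighted Hessian norm of $u_0$; the key algebraic identity is that, by $\mathcal{L}_\varep\Phi_{\varep,j}^\beta=0$, the quantity $\nabla u_\varep-\nabla\Phi_{\varep,j}^\beta\partial_j u_0^\beta$ equals $[\Phi_{\varep,j}^\beta-P_j^\beta]\nabla\partial_j u_0^\beta$ modulo divergence-form errors of order $\varep\nabla^2 u_0$, and $|\Phi_{\varep,j}^\beta-P_j^\beta|\le C\varep$. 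An energy estimate on the associated two-scale remainder, combined with the uniform Avellaneda--Lin boundary Lipschitz estimate (valid under the $C^{2,\eta}$-regularity of $\partial\Omega$ and the H\"older regularity of $A$), converts this into the pointwise bound on $\nabla u_\varep-\nabla\Phi_{\varep,j}^\beta\partial_j u_0^\beta$. Specializing to $u_\varep=G_\varep(\cdot,y_0)$, $u_0=G_0(\cdot,y_0)$ and using the Green's-function Hessian bound $|\nabla^2 G_0(x,y_0)|\le C|x-y_0|^{-d}$ (valid up to $\partial\Omega$ on $C^{2,\eta}$ domains) gives (\ref{Green's-derivative}); the factor $\log[\varep^{-1}r+2]$ arises from summing the resulting residual across the dyadic annular decomposition $\{2^k\varep<|z-y_0|<r\}$, on each of which $\int|z-y_0|^{-d}\,dz\sim 1$. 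The hardest point is the boundary-layer analysis: with only $\chi_j^\beta(\cdot/\varep)$ the ansatz mismatches the zero data by $\varep\chi_j^\beta(\cdot/\varep)\partial_j u_0$ on $\partial\Omega$, and the classical boundary-layer correction costs $O(\sqrt\varep)$, which would destroy (\ref{Green's-derivative}); the Dirichlet corrector is designed precisely to absorb this mismatch, reducing the loss to the near-optimal $O(\varep\log)$, at the cost of the stronger $C^{2,\eta}$ hypothesis on $\partial\Omega$ needed for the uniform $W^{1,\infty}$-bound on $\Phi_{\varep,j}^\beta$.
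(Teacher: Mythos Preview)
Your overall strategy matches the paper's: both parts are proved by localizing to $D_r=B(x_0,r)\cap\Omega$ with $r\sim|x-y|$, writing down the two-scale remainder (with $\chi$ for (\ref{Green's-size}), with $\Phi_\varep$ for (\ref{Green's-derivative})), and then specializing $u_\varep=G_\varep(\cdot,y_0)$, $u_0=G_0(\cdot,y_0)$. Two places in your sketch need sharpening.

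First, what you call the ``interior two-scale expansion with $L^\infty$-remainder $O(\varep)$'' is the main technical lemma, and it must be a \emph{boundary} estimate: $x_0,y_0$ may sit arbitrarily close to $\partial\Omega$. The paper states it as follows: if $\mathcal{L}_\varep u_\varep=\mathcal{L}_0 u_0$ in $D_{4r}$ and $u_\varep=u_0$ on $\Delta_{4r}$, then
\[
\|u_\varep-u_0\|_{L^\infty(D_r)}\le \frac{C}{r^d}\int_{D_{4r}}|u_\varep-u_0|+C\varep\|\nabla u_0\|_{L^\infty(D_{4r})}+C_p\,\varep r^{1-d/p}\|\nabla^2 u_0\|_{L^p(D_{4r})}\quad(p>d).
\]
This is proved by representing $w_\varep=u_\varep-u_0-\varep\chi(\cdot/\varep)\nabla u_0$ via the Green function $\widetilde G_\varep$ for $\mathcal{L}_\varep$ on an intermediate domain $\widetilde D\subset D_{4r}$ and using the uniform bound $|\nabla_y\widetilde G_\varep(x,y)|\le C|x-y|^{1-d}$. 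Your $\phi\to\delta_{x_0}$ limit is a legitimate repackaging of the duality step, but you still need this local $L^\infty$ lemma to bound $v_\varep(y_0)-v_0(y_0)$; the global $L^2$ rate you invoke only feeds the averaged term on the right. The paper instead applies the lemma twice (once in $x$, once in $y$) with $L^p$--$L^{p'}$ duality in between.

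Second, for (\ref{Green's-derivative}) an ``energy estimate'' on the remainder only yields $L^2$ gradient control; the pointwise bound comes from representing $w_\varep=u_\varep-u_0-(\Phi_{\varep,j}^\beta-P_j^\beta)\partial_j u_0^\beta$ via $\widetilde G_\varep$ on a $C^{2,\eta}$ subdomain, differentiating in $x$, and using $|\nabla_x\nabla_y\widetilde G_\varep|\le C|x-y|^{-d}$ together with the Avellaneda--Lin boundary Lipschitz estimate for the $\mathcal{L}_\varep$-harmonic piece. Your account of the logarithm via $\int_{\varep<|z-x|<r}|z-x|^{-d}dz$ is correct, but there is a second source you omit: the identity in Proposition~\ref{prop-2.1} contains the non-divergence term $a_{ij}(\cdot/\varep)\partial_j\big[\Phi_{\varep,k}-P_k-\varep\chi_k(\cdot/\varep)\big]\partial_i\partial_k u_0$, and since $|\nabla(\Phi_{\varep,k}-P_k-\varep\chi_k(\cdot/\varep))|\le C\min(1,\varep/\delta(y))$, integrating this against $|\nabla_x\widetilde G_\varep(x,y)|\le C\delta(y)|x-y|^{-d}$ also produces a $\ln[\varep^{-1}r+2]$.
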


\begin{thm}\label{theorem-B}
Suppose that $A(y)$ satisfies the same conditions as in Theorem \ref{theorem-A}.
Also assume that $A^*=A$, i.e., $a_{ij}^{\alpha\beta}(y)
=a_{ji}^{\beta\alpha}(y)$ for $1\le i,j\le d$ and $1\le \alpha,\beta\le m$.
Then for any $x,y\in \Omega$,
\begin{equation}\label{Neumann's-size}
|N_\varep (x,y)-N_0 (x,y)|\le \frac{C\varep\ln \left[ \varep^{-1}
|x-y| +2\right]}{|x-y|^{d-1}}
\end{equation}
if $\Omega$ is a bounded $C^{1,1}$ domain,
where $C$ depends only on $d$, $m$, $\mu$, $\tau$, $\lambda$ and $\Omega$.
Moreover, if $\Omega$ is a bounded $C^{2, \eta}$ domain for some $\eta\in (0,1)$,
\begin{equation}\label{Neumann-derivative}
\big|\frac{\partial}{\partial x_i}
\big\{ N_\varep^{\alpha\beta} (x,y)\big\}
-\frac{\partial}{\partial x_i} \big\{ \Psi_{\varep, j}^{\alpha\gamma} (x)\big\}
\cdot \frac{\partial}{\partial x_j}
\big\{ N_0^{\gamma\beta}(x,y)\big\} \big|
\le \frac{ C_t\, \varep^t \ln \left[ \varep^{-1} M +2\right]}{|x-y|^{d-1+t}}
\end{equation}
for any $x,y\in \Omega$ and $t\in (0,1)$,
where $M=\text{\rm diam}(\Omega)$
and $C_t$ depends only on $d$, $m$, $\mu$, $\tau$, $\lambda$, $t$ and $\Omega$.
\end{thm}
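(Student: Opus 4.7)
My plan is to prove Theorem~\ref{theorem-B} by mimicking the strategy of Theorem~\ref{theorem-A}, with the Dirichlet correctors $\Phi_{\varep,j}^\beta$ replaced by the Neumann correctors $\Psi_{\varep,j}^\beta$. The hypothesis $A^*=A$ enters in two essential ways: it yields the symmetry $N_\varep(x,y)=N_\varep(y,x)^T$, so $y$-derivatives can be traded for $x$-derivatives; and it is precisely the setting in which the boundary Lipschitz and large-scale $W^{1,p}$ estimates for Neumann problems are available uniformly in $\varep$.

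\textbf{Size estimate (\ref{Neumann's-size}).} Fix $x_0,y_0\in\Omega$ with $r=|x_0-y_0|$. When $r\le c\varep$ the bound is immediate from the pointwise estimates $|N_\varep|,|N_0|\le C|x-y|^{2-d}$, so I assume $r\ge c\varep$. I would argue by duality: for each $f\in C_0^\infty(B(x_0,cr)\cap\Omega)$ with $\int_\Omega f=0$ and $\|f\|_\infty\le1$, let $u_\varep$ and $u_0$ be the mean-zero Neumann solutions of $\mathcal{L}_\varep u_\varep=f$ and $\mathcal{L}_0 u_0=f$. Then
\[
u_\varep(y_0)-u_0(y_0)=\int_\Omega\big[N_\varep(x,y_0)-N_0(x,y_0)\big] f(x)\,dx,
\]
so a uniform bound on the left against such $f$ yields (\ref{Neumann's-size}) by duality. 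For that bound I would combine the $O(\varep)$ $L^2$ convergence rate for the Neumann problem (which I expect to be established earlier in the paper, via a two-scale expansion based on $\Psi_{\varep,j}^\beta$) with the Avellaneda-Lin interior Lipschitz estimate applied on the ball $B(y_0,r/4)$, where $\mathcal{L}_\varep u_\varep=0$. A dyadic decomposition of $\Omega$ around $y_0$ produces the logarithmic factor.

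\textbf{Gradient estimate (\ref{Neumann-derivative}).} Fix $y\in\Omega$ and consider the two-scale remainder
\[
w_\varep^{\alpha\beta}(x):=N_\varep^{\alpha\beta}(x,y)-N_0^{\alpha\beta}(x,y)-\big[\Psi_{\varep,j}^{\alpha\gamma}(x)-P_j^\alpha(x)\delta^{\alpha\gamma}\big]\partial_j N_0^{\gamma\beta}(x,y),
\]
whose $x_i$-derivative, up to terms benign in the regime $|x-y|\gtrsim\varep$, equals the quantity in (\ref{Neumann-derivative}). Using (\ref{Neumann-corrector}) one checks that, away from $y$, $\mathcal{L}_\varep w_\varep$ is controlled by $\partial^2 N_0$ times periodic coefficients of mean zero, and that $\partial w_\varep/\partial\nu_\varep$ on $\partial\Omega$ is similarly under control. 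Boundary Lipschitz and H\"older estimates for $\mathcal{L}_\varep$ on $C^{2,\eta}$ domains (Avellaneda-Lin type, valid in the symmetric case) then convert an $O(\varep)$ smallness of $w_\varep$ into a gradient bound. Interpolation between the sharp $O(\varep)$ rate and the trivial $O(1)$ bound produces the $\varep^t$ factor for $t\in(0,1)$, and the logarithm appears as the resulting uniform constant at the borderline $t=1$.

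\textbf{Main obstacle.} The most delicate step is the boundary layer. The Neumann corrector only matches $\partial P_j^\beta/\partial\nu_0$ on $\partial\Omega$, not $P_j^\beta$ itself, so the boundary contributions to $w_\varep$ and the resulting Neumann data require careful bookkeeping. The symmetry hypothesis $A^*=A$ is used essentially here to access the boundary Lipschitz estimates, and the $C^{2,\eta}$ regularity of $\partial\Omega$ is what allows one to upgrade a size bound on $w_\varep$ to a bound on $\nabla w_\varep$.
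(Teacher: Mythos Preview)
Your overall plan---two-scale expansion with Neumann correctors, representation by Neumann functions on subdomains, and uniform boundary estimates under $A^*=A$---matches the paper's. The outline for (\ref{Neumann's-size}) is close enough in spirit, though the logarithm does not come from a dyadic decomposition around $y_0$: it arises in the local $L^\infty$ lemma (Lemma \ref{lemma-4.1.1}) from a boundary contribution. Writing $w_\varep=u_\varep-u_0-\varep\chi(x/\varep)\nabla u_0$, the Neumann data of $w_\varep$ contains a tangential-derivative term $\tfrac{\varep}{2}(n_i\partial_j-n_j\partial_i)\{F_{jik}(x/\varep)\partial_k u_0\}$; after integration by parts on $\partial\widetilde D$ against $\nabla_y\widetilde N_\varep(x,y)$, one is left with $\varep\int_{\partial\widetilde D\setminus B(\hat x,\varep)}|y-\hat x|^{1-d}\,d\sigma$, which is what produces $\ln[\varep^{-1}r+2]$.

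The genuine gap is in your derivation of (\ref{Neumann-derivative}). You claim the factor $\varep^t$ comes from interpolating a ``sharp $O(\varep)$ rate'' with the trivial $O(1)$ bound, but no $O(\varep)$ gradient bound is available in the Neumann setting---that is precisely what distinguishes (\ref{Neumann-derivative}) from (\ref{Green's-derivative}). The paper does not interpolate; it proves a local Lipschitz lemma (Lemma \ref{lemma-4.2.1}) directly. With $w_\varep=u_\varep-u_0-\{\Psi_{\varep,j}^\beta-P_j^\beta\}\partial_j u_0^\beta$ represented via the Neumann function on a $C^{2,\eta}$ domain $\widetilde D\subset D_{3r}$, the gradient of the volume piece is a singular integral with kernel $\nabla_x\nabla_y\widetilde N_\varep$ against an integrand $f$ satisfying $\|f\|_\infty\le C\varep\ln[\varep^{-1}M+2]\|\nabla^2 u_0\|_\infty$ and local H\"older modulus $H_{\varep,\rho}(f)\le C\varep^{1-\rho}\ln[\varep^{-1}M+2]\|\nabla^2 u_0\|_\infty+C\varep\ln[\varep^{-1}M+2]\|\nabla^2 u_0\|_{C^{0,\rho}}$; applying Lemma \ref{lemma-4.2.0} then yields the factor $\varep^{1-\rho}$ (i.e.\ $\varep^t$ with $t=1-\rho$) directly. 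The logarithm $\ln[\varep^{-1}M+2]$---with $M=\mathrm{diam}(\Omega)$, not $|x-y|$---does not ``appear at the borderline $t=1$'' but is present for every $t$, entering through $\|\Psi_{\varep,j}^\beta-P_j^\beta\|_\infty\le C\varep\ln[\varep^{-1}M+2]$ (Proposition \ref{prop-2.3}), which is weaker than its Dirichlet analogue because the Neumann corrector is only pinned at a single point.
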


A few remarks are in order.

\begin{remark}\label{remark-1.1}
{\rm
In the case of a scalar equation $(m=1)$, the estimate (\ref{Green's-size})
holds for bounded measurable coefficients satisfying (\ref{ellipticity})
and (\ref{periodicity}) (see Theorem \ref{theorem-3.1.1}).
}
\end{remark}

\begin{remark}\label{remark-1.2}
{\rm 
The matrix of Dirichlet correctors $\big(\Phi_{\varep, j}^\beta\big)$
was introduced in \cite{AL-1987} to establish the boundary
Lipschitz estimates for solutions with Dirichlet conditions,
while the matrix of Neumann correctors $\big(\Psi_{\varep, \beta}^\beta\big)$
was introduced in \cite{KLS1} to establish the same estimates
for solutions with Neumann boundary conditions.
It is known that $\|\Phi_{\varep, j}^\beta -P_j^\beta\|_{L^\infty(\Omega)}\le C \varep$
and $\|\nabla \Phi_{\varep, j}^\beta\|_{L^\infty(\Omega)}
+\|\nabla \Psi_{\varep, j}^\beta\|_{L^\infty(\Omega)}\le C$.
Under the condition $\Psi_{\varep, j}^\beta (x_0)=P_j^\beta (x_0)$
for some $x_0\in \Omega$, we also have
$\|\Psi_{\varep, j}^\beta -P_j^\beta\|_{L^\infty(\Omega)}
\le C \varep \ln [\varep^{-1}M+2]$ 
(see Propositions \ref{prop-2.2} and \ref{prop-2.3}).
}
\end{remark}

\begin{remark}\label{remark-1.3}
{\rm
Estimates (\ref{Green's-size}) and (\ref{Neumann's-size})
in Theorems \ref{theorem-A} and \ref{theorem-B}
allow us to establish $O(\varep)$ estimates for
$\| u_\varep-u_0\|_{L^p(\Omega)}$ ($1<p\le \infty$)
for solutions with Dirichlet or Neumann boundary conditions
(see Theorems \ref{theorem-3.1.2} and \ref{theorem-4.1.3}).
More importantly, estimates (\ref{Green's-derivative}) and
(\ref{Neumann-derivative}) yield near optimal convergence rates
in $W^{1,p}(\Omega)$ for any $1<p<\infty$. In fact,
let $\mathcal{L}_\varep (u_\varep)=F$ in $\Omega$
and $u_\varep=0$ on $\partial\Omega$.
Then
\begin{equation}\label{W-1-p-estimate-1}
\| u_\varep -u_0 -\big\{ \Phi_{\varep, j}^\beta
-P_j^\beta\big\} \frac{\partial u_0^\beta}{\partial x_j}\|_{W_0^{1,p}(\Omega)}
\le C_p \, \varep \big\{ \ln [\varep^{-1}M +2]\big\}^{4|\frac12-\frac{1}{p}|}
 \| F\|_{L^p(\Omega)}.
\end{equation}
In the case of Neumann boundary conditions
we obtain
\begin{equation}\label{W-1-p-estimate-2}
\| u_\varep -u_0 -\big\{ \Psi_{\varep, j}^\beta
-P_j^\beta\big\} \frac{\partial u_0^\beta}{\partial x_j}\|_{W^{1,p}(\Omega)}
\le C_{t,p}\, \varep^t \| F\|_{L^p(\Omega)}
\end{equation}
for any $t\in (0,1)$, where 
$\mathcal{L}_\varep (u_\varep)=F$ in $\Omega$, 
$\frac{\partial u_\varep}{\partial\nu_\varep}=0$ on $\partial\Omega$
and $\int_\Omega F=\int_{\partial\Omega} u_\varep =0$.
See subsections 3.2 and 4.2 for details.
Let
$w_\varep =u_\varep -u_0 -\varep \chi_j^\beta (x/\varep) \frac{\partial u_0^\beta}
{\partial x_j}$, where
$(\chi_j^\beta (y))$
denotes the matrix of correctors for $\mathcal{L}_1$ in $\br^d$.
Estimates (\ref{W-1-p-estimate-1}) and (\ref{W-1-p-estimate-2}) should be compared
 to the well known $O(\varep^{1/2})$ estimate:
$
\| w_\varep \|_{H^1(\Omega)}=O(\varep^{1/2})
$
(see e.g. \cite{bensoussan-1978}), 
and to the following estimate,
\begin{equation}\label{H-0.5-estimate}
\| w_\varep \|_{H^{1/2}(\Omega)}
+\left\{ \int_\Omega |\nabla w_\varep (x)|^2 \, \text{dist}(x, \partial\Omega)
\, dx \right\}^{1/2}
\le C \varep \| F\|_{L^2(\Omega)},
\end{equation}
proved in \cite[Theorems 3.4 and 5.2]{KLS2}.
Due to the presence of a boundary layer, 
both the $O(\varep^{1/2})$ estimate 
and (\ref{H-0.5-estimate}) are more or less sharp.
The Dirichlet and Neumann correctors are introduced precisely to deal with
boundary layer phenomena in periodic homogenization.
}
\end{remark}

\begin{remark}\label{remark-1.4}
{\rm
Our approach to Theorems \ref{theorem-A}
and \ref{theorem-B} also leads to asymptotic estimates of $\nabla_x\nabla_y G_\varep (x,y)$
and $\nabla_x\nabla_y N_\varep (x,y)$
(see subsection 3.3 and Remark \ref{remark-4.2.1}).
As a result we obtain asymptotic expansions for
$(\partial/\partial x_i) (\mathcal{L}_\varep)^{-1} 
(\partial/\partial x_j)$ and $\Lambda_\varep$, the Dirichlet-to-Neumann map 
associated with $\mathcal{L}_\varep$.
}
\end{remark}

The asymptotic expansion of the fundamental solutions as well as the heat 
kernels for $\mathcal{L}_\varep$ 
in $\br^d$ has been studied, using the method of Bloch waves;
see e.g. \cite{Sevostjanova-1982,ERS-2001} and their references
(also see \cite{Kozlov-1982} for results obtained by the method of
$G$-convergence). 
In the presence of boundary, the Bloch representation is 
no longer available.
In a series of papers \cite{AL-1987, AL-1989-ho, AL-1989-II},
M. Avellaneda and F. Lin introduced the compactness methods,
which originated in the regularity theory in the calculus of variations
and minimal surfaces,
to homogenization problems.
In particular, they established an asymptotic expansion for
$\nabla_y G_\varep (x,y)$ in \cite{AL-1989-ho}, using Dirichlet correctors.
As a result, it was proved in \cite{AL-1989-ho} that
if $\Omega$ is $C^{1,\eta}$ for some $\eta\in (0,1)$,
\begin{equation}\label{1.10}
P_\varep (x,y)=P_0(x,y)\omega_\varep (y) +R_\varep(x,y) \qquad
\text{ for } x\in \Omega \text{ and } y\in \partial\Omega,
\end{equation}
where $P_\varep (x,y)$ ($\varep\ge 0$) denotes the Poisson 
kernel for $\mathcal{L}_\varep$ in $\Omega$,
\begin{equation}\label{1.11}
\omega_\varep (y)
=\frac{\partial}{\partial n(y)}
\big\{ \Phi_{\varep, k}^* (y)\big\} \cdot n_k(y) h(y) \cdot
n_i(y)n_j(y) a_{ij}(y/\varep)
\end{equation}
and the remainder $R_\varep (x,y)$ satisfies
\begin{equation}\label{1.12}
\lim_{\varep\to  0} \sup\big\{ |R_\varep(x,y)|:
\ x\in E \text{ and } y\in\partial\Omega\big\}
=0
\end{equation}
for any compact subset $E$ of $\Omega$
(the results are stated for the case $m=1$; however, the argument in \cite{AL-1989-ho}
works equally well for elliptic systems).
In (\ref{1.11}) we have used $\Phi_{\varep, k}^*$ to denote the Dirichlet correctors
for $\mathcal{L}^*_\varep$, the adjoint of $\mathcal{L}_\varep$. Also,
$h(y)= (\hat{a}_{ij}n_i (y)n_j(y))^{-1}$ and $(\hat{a}_{ij})$
is the (constant) coefficient matrix of $\mathcal{L}_0$.
The expansion (\ref{1.10}) was used in \cite{AL-1989-ho}
to identify the limit, as $\varep\to 0$, of solutions to a problem
of exact boundary controllability for the wave operator
$\frac{\partial^2}{\partial t^2} +\mathcal{L}_\varep$.

Our Theorem \ref{theorem-A} gives a much more refined estimate of $R_\varep (x,y)$
in (\ref{1.10}) (under the stronger condition $\partial\Omega
\in C^{2,\eta})$. Indeed, it follows from the estimate (\ref{Green's-derivative})
that 
\begin{equation}\label{1.13}
|R_\varep(x,y)|\le \frac{C\, \varep \ln [\varep^{-1}|x-y|+2]}
{|x-y|^d}
\qquad \text{ for any } x\in \Omega, y\in \partial\Omega.
\end{equation}
Besides its applications to boundary control problems,
estimate (\ref{1.13}) may also be used to investigate the Dirichlet problem
\begin{equation}\label{1.14}
\mathcal{L}_\varep (u_\varep) =0 \quad \text{ in }\Omega
\quad \text{ and } \quad
u_\varep (x)=f (x,x/\varep) \quad \text{ on }\partial\Omega,
\end{equation}
where $f(x,y)$ is 1-periodic in $y$. The Dirichlet problem (\ref{1.14})
arises natually in the study of boundary layer phenomena and higher-order
convergence in periodic homogenization (see e.g. 
\cite{Moskow-Vogelius-1,Allaire-Amar-1999,Masmoudi-2008, Masmoudi-2010} and their references).
Let $v_\varep$ be the solution to
\begin{equation}\label{1.15}
\mathcal{L}_0 (v_\varep) =0 \quad \text{ in } \Omega
\quad \text{ and } \quad
v_\varep = f(x, x/\varep) \omega_\varep (x) 
\quad \text{ on } \partial\Omega,
\end{equation}
where $\omega_\varep$ is given by (\ref{1.11}).
It follows from the estimate (\ref{1.13}) that
$$
\|u_\varep -v_\varep \|_{L^p(\Omega)} =O\big( (\varep [\ln (1/\varep)]^2)^{1/p}\big)
\qquad \text{ for any }
1\le p<\infty
$$ 
(see Theorem \ref{approximation-thm}).
This effectively reduces the asymptotic problem (\ref{1.14})
to the study of convergence properties of $\omega_\varep$ on $\partial\Omega$,
under various geometric conditions on $\Omega$.
This line of research will be developed in a future work.

We now describe the main ideas in the proof of Theorems \ref{theorem-A}
and \ref{theorem-B}.
The basic tools in our approach are representation formulas by
Green and Neumann functions, uniform estimates
for Green functions established in \cite{AL-1987},
\begin{equation}\label{1.17}
|\nabla_x G_\varep (x,y)|+|\nabla_y G_\varep (x,y)|\le C |x-y|^{1-d} \quad \text{ and} \quad
|\nabla_x\nabla_y G_\varep (x,y)|\le C|x-y|^{-d},
\end{equation}
and the same estimates obtained in \cite{KLS2} 
for Neumann  functions $N_\varep (x,y)$.
Let $D_r=D(x_0,r)=B(x_0, r)\cap \Omega$ and $\Delta_r
=\Delta(x_0, r)=B(x_0,r)\cap \partial\Omega$ for
some $x_0\in \overline{\Omega}$ and $0<r<r_0$.
First, to establish (\ref{Green's-size}), we will show that if $p>d$,
\begin{equation}\label{1.16}
\| u_\varep -u_0\|_{L^\infty(D_1)}
\le C\int_{D_4} |u_\varep-u_0|\, dx
+ C\varep \|\nabla u_0\|_{L^\infty(D_4)}
+ C_p\, \varep \|\nabla^2 u_0\|_{L^p(D_4)},
\end{equation}
 where $\mathcal{L}_\varep (u_\varep)=\mathcal{L}_0(u_0)$
in $D_4$ and $u_\varep =u_0$ on $\Delta_4$.
This is done by considering
$w_\varep (x)=u_\varep (x)-u_0 (x) -\varep\chi (x/\varep)\nabla u_0$
and using the Green representation formula and
the observation that $\mathcal{L}_\varep (w_\varep)=
\varep \frac{\partial}{\partial x_i}\left(b_{ijk}(x/\varep) \frac{\partial^2 u_0}
{\partial x_j\partial x_k}\right)$, where $b_{ijk}(y)$ is a bounded
periodic function.
Estimate (\ref{Green's-size}) follows from (\ref{1.16})
by a more or less standard argument (see subsection 3.1).
Next, we show in subsection 3.2 that
\begin{equation}\label{1.27}
\aligned
\|\nabla u_\varep  - (\nabla\Phi_\varep ) & (\nabla u_0)\|_{L^\infty(D_r)}
\le  \frac{C}{r^{d+1}} \int_{D_{4r}} |u_\varep -u_0|\, dx
+ C \varep r^{-1}\|\nabla u_0\|_{L^\infty(D_{4r})}\\
& \quad
+C \varep \ln [\varep^{-1} r +2] \| \nabla^2 u_0\|_{L^\infty (D_{4r})}
+ C\varep r^\eta \|\nabla^2 u_0\|_{C^{0, \eta} (D_{4r})},
\endaligned
\end{equation}
if $\mathcal{L}_\varep (u_\varep) =\mathcal{L}_0(u_0)$
in $D_{4r}$ and $u_\varep=u_0$ on $\Delta_{4r}$.
Estimate (\ref{Green's-derivative}) follows easily from
(\ref{1.27}) by taking $u_\varep (x)=G_\varep (x,y_0)$ and
$u_0(x)=G_0(x,y_0)$.
By repeating the argument, estimate (\ref{1.27}) also gives
an asymptotic expansion for $\nabla_x\nabla_y G_\varep(x,y)$ 
(see Theorem \ref{theorem-3.3.1}).
To prove (\ref{1.27}), we let 
\begin{equation}\label{1.18}
w_\varep =u_\varep (x) -u_0 (x) - \big\{ \Phi_{\varep, j}^\beta
-P_j^\beta\big\} \frac{\partial u_0^\beta}{\partial x_j}
\end{equation}
and represent $w_\varep$ in $D_{r}$, using the Green function
in $\wD$, where $\wD$ is a $C^{2,\eta}$ domain such that
$D_{3r}\subset \wD\subset D_{4r}$. 

Although a bit more complicated,
the proof of Theorem \ref{theorem-B} follows the same line of argument as
 Theorem \ref{theorem-A}.
In subsections 4.1 and 4.2 we establish boundary $L^\infty$ and 
Lipschitz estimates similar to (\ref{1.16}) and (\ref{1.27})
for $u_\varep$ and $u_0$ satisfying
$\mathcal{L}_\varep (u_\varep) =\mathcal{L}_0(u_0)$ in $D_{4r}$
and $\frac{\partial u_\varep}{\partial\nu_\varep}
=\frac{\partial u_0}{\partial\nu_0}$ on $\Delta_{4r}$.
The results rely on the uniform $L^p$ and Neumann function
estimates obtained in \cite{Kenig-Shen-2,KLS1}
under the additional symmetry condition $A^*=A$. 

The rest of the paper is organized as follows.
Section 2 contains some basic formulas and estimates which are more or less
known. The case of Dirichlet boundary conditions is treated in Section 3,
while Section 4 is devoted to the case of Neumann boundary conditions.
In Section 5 we prove two inequalities, which are used in subsection 4.3 and are
of interest in their own right,
for the Dirichlet-to-Neumann map $\Lambda_0$.

\section{Preliminaries}

Let $\mathcal{L}_\varep=-\text{\rm div} (A(x/\varep)\nabla )$ 
with $A(y)=\big(a_{ij}^{\alpha\beta}(y)\big)$ satisfying
(\ref{ellipticity})-(\ref{periodicity}).
Let $\chi(y)=\big(\chi_j^{\alpha\beta} (y)\big)$ denote the matrix of correctors 
for $\mathcal{L}_1$ in $\br^d$, where
 $\chi_j^\beta (y)
=(\chi_j^{1\beta}(y), \dots, \chi_j^{m\beta}(y))$ 
is defined by the following cell problem:
\begin{equation}\label{cell-problem}
\left\{
\aligned
& \mathcal{L}_1 (\chi_j^\beta)=-\mathcal{L}_1(P_j^\beta)\quad \text{ in } \br^d,\\
&\chi_j^{\beta} \text{ is periodic with respect to }\mathbb{Z}^d
\text{ and } \int_Y
\chi_j^{\beta} \, dy =0, 
\endaligned
\right.
\end{equation}
for each $1\le j\le d$ and $1\le \beta\le m$.
Here $Y=[0,1)^d\simeq \brd/\mathbb{Z}^d$ and $P_j^\beta (y)
=y_j (0,\dots, 1, \dots, 0)$ with $1$ in the $\beta^{th}$ position.
The homogenized operator is given by
 $\mathcal{L}_0=-\text{div}(\hat{A}\nabla)$, where 
$\hat{A} =(\hat{a}_{ij}^{\alpha\beta})$ and
\begin{equation}
\label{homogenized-coefficient}
\hat{a}_{ij}^{\alpha\beta}
=\int_Y
\left[ a_{ij}^{\alpha\beta}
+a_{ik}^{\alpha\gamma}
\frac{\partial}{\partial y_k}\left( \chi_j^{\gamma\beta}\right)\right]
\, dy.
\end{equation}
It is known that the constant matrix $\hat{A}$ is positive definite with an ellipticity constant
depending only on $d$, $m$ and $\mu$
(see \cite{bensoussan-1978}).

Let
\begin{equation}\label{definition-of-B}
b_{ij}^{\alpha\beta} (y)
=\hat{a}_{ij}^{\alpha\beta}
-a_{ij}^{\alpha\beta} (y)
-a_{ik}^{\alpha\gamma} (y) \frac{\partial}{\partial y_k} \big(\chi_j^{\gamma\beta}\big).
\end{equation}
Since $\int_Y b_{ij}^{\alpha\beta} dy=0$ and $\frac{\partial}{\partial y_i} \big(
b_{ij}^{\alpha\beta}\big)=0$ by (\ref{homogenized-coefficient}) and (\ref{cell-problem}), 
there exists
$F_{kij}^{\alpha\beta}\in H^1(Y)$ such that
\begin{equation}\label{definition-of-F}
b_{ij}^{\alpha\beta} =\frac{\partial}{\partial y_k}
\big\{ F_{kij}^{\alpha\beta}\big\}
\quad \text{ and } \quad
F_{kij}^{\alpha\beta}=-F_{ikj}^{\alpha\beta}.
\end{equation}

\begin{remark}\label{remark-2.1}
{\rm
To see (\ref{definition-of-F}), 
one solves $\Delta f_{ij}^{\alpha\beta}=b_{ij}^{\alpha\beta}$ in $Y$
with $f_{ij}^{\alpha\beta}\in H^1(Y)$ and $\int_Y f_{ij}^{\alpha\beta} dy=0$, 
and let
$$
F_{kij}^{\alpha\beta} (y) 
=\frac{\partial f_{ij}^{\alpha\beta}}{\partial y_k} -\frac{\partial f_{kj}^{\alpha\beta}}
{\partial y_i}
$$
(see e.g. \cite{KLS2}).
Note that if $A(y)$ is H\"older continuous, then $\nabla \chi$ and hence $b_{ij}^{\alpha\beta}$
are H\"older continuous. It follows that $\nabla F$ is H\"older continuous.
In particular,
 $\|\chi\|_{C^1(Y)} +\| F\|_{C^1(Y)}$ is bounded by a constant
depending only on $d$, $m$, $\mu$, $\lambda$ and $\tau$.
In the case of the scalar equation $(m=1)$ with bounded measurable coefficients,
the corrector $\chi$ is H\"older continuous by the De Giorgi -Nash estimates.
This, together with Cacciopoli's inequality and H\"older's inequality, implies that there exist $t>0$ and
$C>0$, depending only on $d$ and $\mu$, such that
 $$
 \int_{B(x,r)} |\nabla\chi|\, dy \le C\, r^{d-1+t}
 \quad \text{ for } x\in Y \text{ and } 0<r<1.
 $$
 In view of (\ref{definition-of-B}) we obtain
 \begin{equation}\label{estimate-of-b}
 \int_{B(x,r)} |b_{ij}(y)|\, dy \le C\, r^{d-1+t} \quad \text{ for } x\in Y \text{ and } 0<r<1.
 \end{equation}
 Since $\Delta f_{ij}=b_{ij}$ in $Y$ and $\int_Y f_{ij} dy=0$,
 \begin{equation}\label{estimate-of-b-2}
 \|\nabla f_{ij}\|_{L^\infty(Y)}
 \le C \| \nabla f_{ij}\|_{L^2(Y)}
 +C \sup_{x\in Y} \int_Y \frac{|b_{ij}(y)|}{|x-y|^{d-1}} \, dy
 \le C(d, \mu),
 \end{equation}
 where we have used (\ref{estimate-of-b}) to estimate the last integral in (\ref{estimate-of-b-2}).
  It follows that  $\|F_{kij}\|_\infty
\le C(d, \mu)$.
}
\end{remark}

The following proposition plays an important role in this paper.
We mention that formula (\ref{formula-2.1}) with  $V_{\varep, j}^\beta (x) 
=P_j^\beta (x)+\varep \chi_j^\beta (x/\varep)$ is known
and may be used to show that $\| u_\varep-u_0\|_{L^2(\Omega)} \le C\varep \| u_0\|_{H^2(\Omega)}$,
where $\mathcal{L}_\varep (u_\varep)=\mathcal{L}_0 (u_0)$ in $\Omega$
and $u_\varep=u_0$ on $\partial\Omega$
(see e.g. \cite{KLS2}).
The proof of our main results on the first-order derivatives of
Green and Neumann functions will rely on (\ref{formula-2.1})
with the matrices of Dirichlet and Neumann correctors respectively in the place
of the functions $V_{\varep, j}^\beta$.

\begin{prop}\label{prop-2.1}
Suppose that $u_\varep\in H^1(\Omega), u_0\in H^2(\Omega)$ and $\mathcal{L}_\varep (u_\varep)
=\mathcal{L}_0(u_0)$ in $\Omega$.
Let
\begin{equation}\label{definition-of-w}
w_\varep (x) =u_\varep (x)-u_0 (x) -\big\{ V_{\varep, j}^\beta (x) -P_j^\beta (x)\big\}
\cdot \frac{\partial u_0^\beta}{\partial x_j},
\end{equation}
where $V_{\varep, j}^\beta= (V_{\varep,j}^{1\beta}, \dots,
V_{\varep, j}^{m\beta})
 \in H^1(\Omega)$ and $\mathcal{L}_\varep (V_{\varep, j}^\beta)
=0$ in $\Omega$ for each $1\le j\le d$ and $1\le \beta\le m$.
Then
\begin{equation}\label{formula-2.1}
\aligned
\left(\mathcal{L}_\varep (w_\varep)\right)^\alpha
=& \varep \frac{\partial}{\partial x_i}
\left\{ \left[ F_{jik}^{\alpha\gamma} \left({x}/{\varep}\right)
\right]
\frac{\partial^2 u_0^\gamma}{\partial x_j\partial x_k}\right\}\\
&+\frac{\partial}{\partial x_i}
\left\{ a_{ij}^{\alpha\beta}\left({x}/{\varep}\right)
\left[ V_{\varep, k}^{\beta\gamma}(x) 
-x_k \delta^{\beta\gamma}\right]
\frac{\partial^2 u_0^\gamma}{\partial x_j\partial x_k}\right\}\\
&
+a_{ij}^{\alpha\beta} \left({x}/{\varep}\right)
\frac{\partial}{\partial x_j}
\left[ V_{\varep, k}^{\beta\gamma}(x)
-x_k \delta^{\beta\gamma}
-\varep \chi_k^{\beta\gamma}\left({x}/{\varep}\right) \right]
\frac{\partial^2 u_0^\gamma}{\partial x_i\partial x_k},
\endaligned
\end{equation}
where $\delta^{\beta\gamma}=1$ if $\beta=\gamma$, and zero otherwise.
\end{prop}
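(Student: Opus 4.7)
The plan is to apply $\mathcal{L}_\varep$ directly to each piece of $w_\varep$, substitute the two constraints we have on $u_\varep$ and $V_{\varep,j}^\beta$, and then massage the leftover terms into the three pieces of \eqref{formula-2.1}. Write $\phi_k^{\beta\gamma}(x):=V_{\varep,k}^{\beta\gamma}(x)-x_k\delta^{\beta\gamma}$ for brevity; then the $\alpha$-component of $w_\varep$ is $w_\varep^\alpha = u_\varep^\alpha-u_0^\alpha-\phi_k^{\beta\gamma}\,\partial_k u_0^\gamma$ (the $\beta$ index is actually $\gamma$; I just relabel), so by the product rule and linearity
\[
(\mathcal{L}_\varep w_\varep)^\alpha = (\mathcal{L}_\varep u_\varep)^\alpha - (\mathcal{L}_\varep u_0)^\alpha - \frac{\partial}{\partial x_i}\Big[a_{ij}^{\alpha\beta}(x/\varep)\,(\partial_j\phi_k^{\beta\gamma})\,\partial_k u_0^\gamma\Big] - \frac{\partial}{\partial x_i}\Big[a_{ij}^{\alpha\beta}(x/\varep)\,\phi_k^{\beta\gamma}\,\partial_j\partial_k u_0^\gamma\Big].
\]
The first term equals $(\mathcal{L}_0 u_0)^\alpha=-\hat a_{ij}^{\alpha\beta}\partial_i\partial_j u_0^\beta$ by hypothesis, and the fourth term is precisely the second term in \eqref{formula-2.1}, so I set it aside.

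Next I would exploit $\mathcal{L}_\varep V_{\varep,k}^{\gamma}=0$: since $\partial_j\phi_k^{\beta\gamma}=\partial_j V_{\varep,k}^{\beta\gamma}-\delta_{jk}\delta^{\beta\gamma}$, distributing the $\partial_i$ across the product and using that $\partial_i[a_{ij}^{\alpha\beta}(x/\varep)\partial_j V_{\varep,k}^{\beta\gamma}]=0$ collapses
\[
-\frac{\partial}{\partial x_i}\Big[a_{ij}^{\alpha\beta}(x/\varep)\,(\partial_j\phi_k^{\beta\gamma})\,\partial_k u_0^\gamma\Big] = (\mathcal{L}_\varep u_0)^\alpha - a_{ij}^{\alpha\beta}(x/\varep)\,(\partial_j V_{\varep,k}^{\beta\gamma})\,\partial_i\partial_k u_0^\gamma,
\]
which cancels the $-(\mathcal{L}_\varep u_0)^\alpha$ already present. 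At this point everything except the known second term of \eqref{formula-2.1} has been reduced to the single pointwise expression $\bigl[a_{ij}^{\alpha\beta}(x/\varep)\partial_j V_{\varep,k}^{\beta\gamma}-\hat a_{ik}^{\alpha\gamma}\bigr]\partial_i\partial_k u_0^\gamma$, so the rest of the proof is the identification of this bracket.

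For that identification I would insert the standard splitting $V_{\varep,k}^{\beta\gamma}=x_k\delta^{\beta\gamma}+\varep\chi_k^{\beta\gamma}(x/\varep)+\bigl[V_{\varep,k}^{\beta\gamma}-x_k\delta^{\beta\gamma}-\varep\chi_k^{\beta\gamma}(x/\varep)\bigr]$ and read off, using \eqref{definition-of-B}, that
\[
a_{ij}^{\alpha\beta}(x/\varep)\partial_j V_{\varep,k}^{\beta\gamma}-\hat a_{ik}^{\alpha\gamma} = -b_{ik}^{\alpha\gamma}(x/\varep) + a_{ij}^{\alpha\beta}(x/\varep)\,\partial_j\bigl[V_{\varep,k}^{\beta\gamma}-x_k\delta^{\beta\gamma}-\varep\chi_k^{\beta\gamma}(x/\varep)\bigr].
\]
The second piece, multiplied by $\partial_i\partial_k u_0^\gamma$, is exactly the third term of \eqref{formula-2.1}. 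What remains is to convert $-b_{ik}^{\alpha\gamma}(x/\varep)\,\partial_i\partial_k u_0^\gamma$ into the $\varep$-flux form with $F$.

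The main obstacle, such as it is, sits here: I need to promote a pointwise product of $b$ with $\nabla^2 u_0$ into a divergence, at the cost of one factor of $\varep$. The key is the antisymmetry $F_{jik}^{\alpha\gamma}=-F_{ijk}^{\alpha\gamma}$ from \eqref{definition-of-F}. By the chain rule $b_{ik}^{\alpha\gamma}(x/\varep)=\varep\,\partial_{x_j}\bigl[F_{jik}^{\alpha\gamma}(x/\varep)\bigr]$, and the antisymmetry together with the symmetry of $\partial_i\partial_j\partial_k u_0^\gamma$ forces $F_{jik}^{\alpha\gamma}(x/\varep)\,\partial_i\partial_j\partial_k u_0^\gamma=0$, so
\[
-b_{ik}^{\alpha\gamma}(x/\varep)\,\partial_i\partial_k u_0^\gamma = -\varep\,\frac{\partial}{\partial x_j}\Big[F_{jik}^{\alpha\gamma}(x/\varep)\,\partial_i\partial_k u_0^\gamma\Big] = \varep\,\frac{\partial}{\partial x_i}\Big[F_{jik}^{\alpha\gamma}(x/\varep)\,\partial_j\partial_k u_0^\gamma\Big],
\]
where the last equality uses the antisymmetry once more followed by relabeling the dummy indices $i\leftrightarrow j$. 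This is the first term of \eqref{formula-2.1}, and the identity is proved. The only subtlety worth double-checking during the write-up is the bookkeeping of free versus dummy indices in the vector–matrix contractions, since the same Greek letters are recycled across $V$, $\chi$, and $u_0$.
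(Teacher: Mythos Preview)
Your argument is essentially the paper's own proof, organized in the same way: expand $\mathcal{L}_\varep w_\varep$ by the product rule, use $\mathcal{L}_\varep V_{\varep,k}^\gamma=0$ and $\mathcal{L}_\varep u_\varep=\mathcal{L}_0 u_0$ to isolate the bracket $[a_{ij}^{\alpha\beta}\partial_j V_{\varep,k}^{\beta\gamma}-\hat a_{ik}^{\alpha\gamma}]\partial_i\partial_k u_0^\gamma$, split off the corrector to produce $b$, and then use the antisymmetry $F_{jik}=-F_{ijk}$ to recast the $b$-term as an $\varep$-divergence. The logic and the final identity are correct.

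One bookkeeping warning for the write-up: your two displayed intermediate identities carry sign slips. Since $\mathcal{L}_\varep=-\partial_i(a_{ij}\partial_j)$, the last two terms of your first display should come with $+$ signs (applying $\mathcal{L}_\varep$ to $-\phi\cdot\partial u_0$ yields $+\partial_i[a_{ij}\partial_j(\phi\cdot\partial u_0)]$), and consequently the $(\mathcal{L}_\varep u_0)^\alpha$ produced in the second display should appear with the opposite sign from what you wrote. Your \emph{stated} conclusion---that everything collapses to $[a_{ij}^{\alpha\beta}\partial_j V_{\varep,k}^{\beta\gamma}-\hat a_{ik}^{\alpha\gamma}]\partial_i\partial_k u_0^\gamma$ plus the second term of \eqref{formula-2.1}---is correct, but only with the corrected signs; as written the displays do not literally add up to that. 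Also, in the sentence defining $w_\varep^\alpha$ the free index on $\phi$ should be $\alpha$, not $\beta$.
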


\begin{proof}
Note that
$$
\aligned
a_{ij}^{\alpha\beta}({x}/{\varep})
 \frac{\partial w^\beta_\varep}{\partial x_j}
=a_{ij}^{\alpha\beta} \left(\frac{x}{\varep}\right)
\frac{\partial u^\beta_\varep}{\partial x_j}
-& a_{ij}^{\alpha\beta} \left(\frac{x}{\varep}\right)
\frac{\partial u^\beta_0}{\partial x_j}
-a_{ij}^{\alpha\beta} \left(\frac{x}{\varep}\right)
\frac{\partial}{\partial x_j}
\left\{ V_{\varep, k}^{\beta\gamma}-x_k\delta^{\beta\gamma}
\right\}
\cdot \frac{\partial u_0^\gamma}{\partial x_k}\\
&-a_{ij}^{\alpha\beta}\left(\frac{x}{\varep}\right) 
\left\{ V_{\varep, k}^{\beta\gamma}-x_k\delta^{\beta\gamma}\right\}
\cdot \frac{\partial^2 u^\gamma_0}{\partial x_k
\partial x_j}.
\endaligned
$$
This, together with $\mathcal{L}_\varep (u_\varep)=\mathcal{L}_0( u_0)$, gives
$$
\aligned
\big\{\mathcal{L}_\varep (w_\varep)\big\}^\alpha
=& -\frac{\partial}{\partial x_i} \left\{ 
\left[ \hat{a}_{ij}^{\alpha\beta} -a_{ij}^{\alpha\beta} \left({x}/{\varep}\right)\right]
\frac{\partial u_0^\beta}{\partial x_j}\right\}
+\big\{ \mathcal{L}_\varep (V_{\varep,k}^\gamma -P_k^\gamma) \big\}^\alpha 
\cdot\frac{\partial u_0^\gamma}
{\partial x_k}\\
&+{a}_{ij}^{\alpha\beta} \left( {x}/{\varep}\right) 
\frac{\partial}{\partial x_j} \left\{ V_{\varep, k}^{\beta\gamma}
-x_k \delta^{\beta\gamma} \right\} \cdot \frac{\partial^2 u_0^\gamma}
{\partial x_i \partial x_k}\\
& +\frac{\partial}{\partial x_i}
\left\{a_{ij}^{\alpha\beta}\left({x}/{\varep}\right) 
\left[ V_{\varep, k}^{\beta\gamma}-x_k\delta^{\beta\gamma}\right]
\cdot \frac{\partial^2 u^\gamma_0}{\partial x_k
\partial x_j}\right\}.
\endaligned
$$
Since
$$
\mathcal{L}_\varep \big( V_{\varep, k}^\gamma -P_k^\gamma\big)
=-\mathcal{L}_\varep \big(P_k^\gamma\big)
=\mathcal{L}_\varep \big\{ \varep \chi_k^\gamma ({x}/{\varep})\big\},
$$
we obtain
\begin{equation}\label{2.1.6}
\aligned
\big\{\mathcal{L}_\varep (w_\varep)\big\}^\alpha
= &-\frac{\partial}{\partial x_i} \left\{ b_{ij}^{\alpha\beta} \left(
{x}/{\varep}\right) \frac{\partial u_0^\beta}{\partial x_j}\right\}\\
& +a_{ij}^{\alpha\beta} \left({x}/{\varep}\right)
\frac{\partial}{\partial x_j} \left\{ V_{\varep, k}^{\beta\gamma} (x)
-x_k\delta^{\beta\gamma} -\frac{\partial \chi_k^{\beta\gamma}}
{\partial x_j} \left({x}/{\varep}\right)\right\}
 \cdot \frac{\partial^2 u_0^\gamma}
{\partial x_i \partial x_k}\\
& 
+\frac{\partial}{\partial x_i}
\left\{a_{ij}^{\alpha\beta}\left({x}/{\varep}\right) 
\left[ V_{\varep, k}^{\beta\gamma}-x_k\delta^{\beta\gamma}\right]
\cdot \frac{\partial^2 u^\gamma_0}{\partial x_k
\partial x_j}\right\},
\endaligned
\end{equation}
where $b_{ij}^{\alpha\beta} (y)$ is defined by (\ref{definition-of-B}).
In view of (\ref{definition-of-F}), we may re-write the first term in the right hand side of
(\ref{2.1.6}) as
$$
-\frac{\partial}{\partial x_i}
\left\{ \frac{\partial}{\partial x_k}
\left[ \varep F_{kij}^{\alpha\beta} \left({x}/{\varep}\right)\right] \cdot
\frac{\partial u^\beta_0}{\partial x_j} \right\}
=\varep \frac{\partial}{\partial x_i} \left\{ F_{kij}^{\alpha\beta} 
\left({x}/{\varep}\right) \cdot \frac{\partial^2 u_0^\beta}{\partial x_k\partial x_j}\right\}.
$$
The formula (\ref{formula-2.1}) now follows.
\end{proof}

The next proposition will be used to handle the Neumann boundary condition
(cf. \cite{KLS1}).

\begin{prop}\label{prop-2.4}
Let $w_\varep$ be given by (\ref{definition-of-w}).
Suppose that $\frac{\partial}{\partial \nu_\varep}
\big\{ V_{\varep, k}^\beta\big\} =\frac{\partial}{\partial\nu_0} \big\{P_k^\beta\big\}$.
Then
\begin{equation}\label{conormal-formula}
\left(\frac{\partial w_\varep}{\partial \nu_\varep}\right)^\alpha
=\left(\frac{\partial u_\varep}{\partial \nu_\varep}\right)^\alpha
-\left(\frac{\partial u_0}{\partial \nu_0}\right)^\alpha
-n_i a_{ij}^{\alpha\beta}(x/\varep) \big\{
V_{\varep,k}^{\beta\gamma} -P_k^{\beta\gamma}\big\}
\cdot \frac{\partial^2 u_0^\gamma}{\partial x_k\partial x_j}.
\end{equation}
\end{prop}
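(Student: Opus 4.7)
The plan is a direct computation from the definition of the conormal derivative, exploiting the Neumann boundary condition imposed on the $V_{\varep,k}^\beta$'s. Recall that
\[
\left(\frac{\partial w_\varep}{\partial\nu_\varep}\right)^\alpha
= n_i\, a_{ij}^{\alpha\beta}(x/\varep)\, \frac{\partial w_\varep^\beta}{\partial x_j},
\qquad
\left(\frac{\partial u_0}{\partial\nu_0}\right)^\alpha
= n_i\, \hat a_{ij}^{\alpha\beta}\, \frac{\partial u_0^\beta}{\partial x_j}.
\]
Differentiating the definition (\ref{definition-of-w}) of $w_\varep$ by the product rule yields four terms in $\partial w_\varep^\beta/\partial x_j$: the contributions from $u_\varep$, $u_0$, and the two pieces coming from differentiating the product $\{V_{\varep,k}^{\beta\gamma}-P_k^{\beta\gamma}\}\,\partial u_0^\gamma/\partial x_k$. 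The first one contracts with $n_i a_{ij}^{\alpha\beta}(x/\varep)$ to give exactly $(\partial u_\varep/\partial\nu_\varep)^\alpha$, and the last one produces the displayed term involving $\partial^2 u_0^\gamma/\partial x_k\partial x_j$ with no further manipulation.

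The heart of the argument is to show that the remaining two terms combine to give $-(\partial u_0/\partial\nu_0)^\alpha$. After contracting with $n_i a_{ij}^{\alpha\beta}(x/\varep)$, these two terms read
\[
-\, n_i a_{ij}^{\alpha\beta}(x/\varep)\frac{\partial u_0^\beta}{\partial x_j}
\;-\; n_i a_{ij}^{\alpha\beta}(x/\varep)\frac{\partial}{\partial x_j}\bigl\{V_{\varep,k}^{\beta\gamma}-P_k^{\beta\gamma}\bigr\}\cdot \frac{\partial u_0^\gamma}{\partial x_k}.
\]
In the bracketed factor of the second term, $P_k^{\beta\gamma}=x_k\delta^{\beta\gamma}$ contributes $n_i a_{ij}^{\alpha\beta}(x/\varep)\delta_{jk}\delta^{\beta\gamma}=n_i a_{ik}^{\alpha\gamma}(x/\varep)$, which after relabeling exactly cancels the first $-n_ia_{ij}^{\alpha\beta}(x/\varep)\partial u_0^\beta/\partial x_j$ term. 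What survives is
\[
-\, n_i a_{ij}^{\alpha\beta}(x/\varep)\frac{\partial V_{\varep,k}^{\beta\gamma}}{\partial x_j}\cdot\frac{\partial u_0^\gamma}{\partial x_k}
=-\left(\frac{\partial V_{\varep,k}^\gamma}{\partial\nu_\varep}\right)^{\!\alpha}\frac{\partial u_0^\gamma}{\partial x_k},
\]
and here the hypothesis $\partial V_{\varep,k}^\gamma/\partial\nu_\varep=\partial P_k^\gamma/\partial\nu_0$ converts it to $-n_i\hat a_{ij}^{\alpha\beta}\delta_{jk}\delta^{\beta\gamma}\,\partial u_0^\gamma/\partial x_k=-n_i\hat a_{ik}^{\alpha\gamma}\,\partial u_0^\gamma/\partial x_k=-(\partial u_0/\partial\nu_0)^\alpha$, as required.

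There is no real obstacle: the proof is a careful index manipulation, and the only substantive step is the observation that splitting $V_{\varep,k}^{\beta\gamma}-P_k^{\beta\gamma}$ into its two pieces separates the cancellation (for $P_k^\gamma$ against the direct $A(x/\varep)\nabla u_0$ term) from the use of the Neumann corrector identity (for $V_{\varep,k}^\gamma$, which produces the homogenized coefficient $\hat A$). Assembling the four contributions yields precisely formula (\ref{conormal-formula}).
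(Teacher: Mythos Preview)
Your proof is correct and follows essentially the same approach as the paper's: a direct computation of $(\partial w_\varep/\partial\nu_\varep)^\alpha$ via the product rule, followed by the observation that the hypothesis $\partial V_{\varep,k}^\beta/\partial\nu_\varep=\partial P_k^\beta/\partial\nu_0$ converts the cross term into $-(\partial u_0/\partial\nu_0)^\alpha+(\partial u_0/\partial\nu_\varep)^\alpha$, which combines with $-(\partial u_0/\partial\nu_\varep)^\alpha$ to give the claimed $-(\partial u_0/\partial\nu_0)^\alpha$. Your version merely unpacks the index algebra a bit more explicitly by splitting $V_{\varep,k}^{\beta\gamma}-P_k^{\beta\gamma}$ into its two pieces, but the argument is the same.
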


\begin{proof}
Note that
\begin{equation}\label{2.4.1}
\aligned
\left(\frac{\partial w_\varep}{\partial \nu_\varep}\right)^\alpha
&=\left(\frac{\partial u_\varep}{\partial \nu_\varep}\right)^\alpha
-\left(\frac{\partial u_0}{\partial \nu_\varep}\right)^\alpha
-n_i a_{ij}^{\alpha\beta} (x/\varep) \frac{\partial}{\partial x_j}
\big[ V_{\varep, k}^{\beta\gamma} -P_k^{\beta\gamma} \big] \cdot \frac{\partial u_0^\gamma}
{\partial x_k}\\
& \qquad\qquad\qquad
-n_i a_{ij}^{\alpha\beta}(x/\varep)
\big[V_{\varep, k}^{\beta\gamma} -P_k^{\beta\gamma} \big]\cdot \frac{\partial^2 u_0^\gamma}
{\partial x_j\partial x_k}.\\
\endaligned
\end{equation}
Since $\frac{\partial}{\partial \nu_\varep}
\big\{ V_{\varep, k}^\beta\big\} =\frac{\partial}{\partial\nu_0} \big\{P_k^\beta\big\}$,
the third term in the right hand side of (\ref{2.4.1})
equals $-\left(\frac{\partial u_0}{\partial \nu_0}\right)^\alpha
+\left(\frac{\partial u_0}{\partial \nu_\varep}\right)^\alpha$.
This gives (\ref{conormal-formula}).
\end{proof}

The following two propositions provide the properties
of the Dirichlet and Neumann correctors needed in this paper.

\begin{prop}\label{prop-2.2}
Let $\mathcal{L}_\varep=-\text{\rm div} \big(A(x/\varep)\nabla\big)$
with $A(y)$ satisfying (\ref{ellipticity}), (\ref{periodicity}) and (\ref{smoothness}).
Let $\big (\Phi_{\varep, j}^\beta\big)$ denote the matrix of Dirichlet correctors
for $\mathcal{L}_\varep$ in a $C^{1,\eta}$ domain $\Omega$.
Then
\begin{equation}\label{2.2.2}
 |\nabla \Phi_j^\beta (x)|\le C, \qquad
 |\Phi_{\varep, j}^\beta (x)   -P_j^\beta (x)|\le C\varep
\end{equation}
and
\begin{equation}\label{2.2.3}
 \big|\nabla \big\{ \Phi_{\varep, j}^\beta (x) - P_j^\beta (x)-\varep \chi_j^\beta \left
({x}/{\varep}\right)\big\}\big| \le C \min \left\{ 1, \frac{\varep}{\delta (x)}\right\}
\end{equation}
for any $x\in \Omega$, where 
$\delta(x)=\text{\rm dist} (x,\partial\Omega)$
and $C$ depends only $d$, $m$, $\mu$, $\lambda$, $\tau$ and $\Omega$.
\end{prop}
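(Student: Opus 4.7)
The plan is to combine three ingredients: the uniform boundary Lipschitz estimate of Avellaneda--Lin for solutions of $\mathcal{L}_\varep u=0$ in $C^{1,\eta}$ domains, an explicit algebraic identity showing that $P_j^\beta + \varep\chi_j^\beta(x/\varep)$ is a solution, and the interior Lipschitz estimate applied to a function whose size we control by the boundary data.

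\medskip

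\emph{Step 1 (the first bound in (\ref{2.2.2})).}
Since $\Phi_{\varep,j}^\beta$ solves $\mathcal{L}_\varep(\Phi_{\varep,j}^\beta)=0$ in $\Omega$ with boundary datum $P_j^\beta$, and since $P_j^\beta\in C^{1,\eta}(\partial\Omega)$ with norm bounded by a constant depending only on $\Omega$, the uniform boundary Lipschitz estimate of Avellaneda--Lin \cite{AL-1987} (valid under (\ref{ellipticity})--(\ref{smoothness}) and the $C^{1,\eta}$ assumption on $\Omega$) gives
$$
\|\nabla \Phi_{\varep,j}^\beta\|_{L^\infty(\Omega)}\le C\, \|P_j^\beta\|_{C^{1,\eta}(\partial\Omega)}\le C.
$$

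\medskip

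\emph{Step 2 (the second bound in (\ref{2.2.2})).}
Set
$$
v_\varep(x)=\Phi_{\varep,j}^\beta(x)-P_j^\beta(x)-\varep\chi_j^\beta(x/\varep).
$$
The cell problem (\ref{cell-problem}) can be rewritten as $\mathcal{L}_\varep\bigl(P_j^\beta(x)+\varep\chi_j^\beta(x/\varep)\bigr)=0$ in $\br^d$, so $\mathcal{L}_\varep(v_\varep)=0$ in $\Omega$. On $\partial\Omega$ one has $\Phi_{\varep,j}^\beta=P_j^\beta$, hence $v_\varep=-\varep\chi_j^\beta(x/\varep)$ on $\partial\Omega$, and by Remark \ref{remark-2.1} this datum has $L^\infty$ norm bounded by $C\varep$. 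A global $L^\infty$ bound (the scalar maximum principle when $m=1$; in the system case the bound $\|v_\varep\|_{L^\infty(\Omega)}\le C\|v_\varep\|_{L^\infty(\partial\Omega)}$ follows from the Green function pointwise estimates of \cite{AL-1987}) yields
$$
\|v_\varep\|_{L^\infty(\Omega)}\le C\varep,
$$
so $\|\Phi_{\varep,j}^\beta-P_j^\beta\|_{L^\infty(\Omega)}\le C\varep$ after adding back the $\varep\chi$ term.

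\medskip

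\emph{Step 3 (estimate (\ref{2.2.3})).}
Write $\nabla v_\varep=\nabla\Phi_{\varep,j}^\beta-\nabla P_j^\beta-(\nabla\chi_j^\beta)(x/\varep)$. The first term is bounded by Step~1, and the remaining two are bounded by the periodicity and $C^1$-estimate for $\chi$ in Remark \ref{remark-2.1}; this yields $|\nabla v_\varep(x)|\le C$, which is the first half of (\ref{2.2.3}). For the second half, fix $x\in\Omega$ with $r:=\delta(x)>0$ and apply the uniform interior Lipschitz estimate of \cite{AL-1987} on the ball $B(x,r/2)$ where $\mathcal{L}_\varep(v_\varep)=0$:
$$
|\nabla v_\varep(x)|\le \frac{C}{r}\,\|v_\varep\|_{L^\infty(B(x,r/2))}\le \frac{C\varep}{\delta(x)},
$$
using the bound from Step 2. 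Taking the minimum of the two gradient bounds gives (\ref{2.2.3}).

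\medskip

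The only substantive input at each step is the Avellaneda--Lin uniform regularity machinery; the main conceptual point—and the step that dictates the structure of the argument—is the algebraic observation that $v_\varep=\Phi_{\varep,j}^\beta-P_j^\beta-\varep\chi_j^\beta(x/\varep)$ is $\mathcal{L}_\varep$-harmonic, which reduces both the $L^\infty$ bound on $\Phi-P$ and the sharp gradient estimate to the routine application of global and interior regularity to a function whose boundary trace is controlled by the corrector $\chi$.
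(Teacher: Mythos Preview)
Your proof is correct and follows essentially the same route as the paper: define $v_\varep=\Phi_{\varep,j}^\beta-P_j^\beta-\varep\chi_j^\beta(x/\varep)$, observe it is $\mathcal{L}_\varep$-harmonic with boundary data $-\varep\chi_j^\beta(x/\varep)$, and then invoke the Avellaneda--Lin uniform $L^\infty$ and interior Lipschitz estimates. The only cosmetic difference is that you spell out slightly more explicitly why $|\nabla v_\varep|\le C$ globally (via $\nabla\Phi$, $\nabla P$, and $\nabla\chi$ separately), which the paper simply asserts.
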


\begin{proof}
The first estimate in (\ref{2.2.2}) follows from the Lipschitz estimate in \cite{AL-1987}.
To see the second estimate, let $u_\varep (x) =\Phi_{\varep, j}^\beta (x) -P_j^\beta (x)
-\varep \chi_j^\beta (x/\varep)$. Then $\mathcal{L}_\varep (u_\varep)=0$
in $\Omega$ and $u_\varep (x)=-\varep \chi_j^\beta(x/\varep)$ for $x\in \partial\Omega$.
It again follows from \cite{AL-1987} that $\|u_\varep \|_{L^\infty(\Omega)}
\le C \| u_\varep \|_{L^\infty(\partial\Omega)} \le C \varep$.
Hence, $\|\Phi_{\varep, j}^\beta -P_j^\beta\|_{L^\infty(\Omega)}
\le C\varep +\| \varep \chi_j^\beta\|_\infty \le C\varep$.
Finally, note that $\|\nabla u_\varep\|_{L^\infty(\Omega)} \le C$. 
Also, by the interior estimate in \cite{AL-1987} and $\|u_\varep\|_{L^\infty(\Omega)}
\le C\varep$, one obtains 
$|\nabla u_\varep (x)|\le C \varep [\delta(x)]^{-1}$.
This gives the estimate (\ref{2.2.3}).
\end{proof}

\begin{prop}\label{prop-2.3}
Let $\mathcal{L}_\varep=-\text{\rm div} \big(A(x/\varep)\nabla\big)$
with $A(y)$ satisfying (\ref{ellipticity}), (\ref{periodicity}),
(\ref{smoothness}) and the symmetry condition $A^*=A$.
Let $\big (\Psi_{\varep, j}^\beta\big)$ denote the matrix of Neumann correctors
for $\mathcal{L}_\varep$ in a $C^{1,\eta}$ domain $\Omega$.
Suppose $\Psi_{\varep,j}^\beta (x_0)=P_j^\beta (x_0)$ for some $x_0\in \Omega$.
Then
\begin{equation}\label{2.2.4}
 |\nabla \Psi_j^\beta (x)|\le C, \qquad
 |\Psi_{\varep, j}^\beta (x)   -P_j^\beta (x)|\le C\varep \ln [\varep^{-1}M +2]
\end{equation}
and
\begin{equation}\label{2.2.5}
 \big|\nabla \big\{ \Psi_{\varep, j}^\beta (x) - P_j^\beta (x)-\varep \chi_j^\beta \left
({x}/{\varep}\right)\big\}\big| \le C \min \left\{ 1, \frac{\varep}{\delta (x)}\right\}
\end{equation}
for any $x\in \Omega$, where $M=\text{\rm diam}(\Omega)$ and
 $C$ depends only $d$, $m$, $\mu$, $\lambda$, $\tau$ and $\Omega$.
\end{prop}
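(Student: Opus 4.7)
The proof parallels Proposition \ref{prop-2.2}, with the Dirichlet estimates of \cite{AL-1987} replaced by their Neumann counterparts from \cite{KLS1,KLS2}, which are available because of the symmetry hypothesis $A^* = A$. First, since $\Psi_{\varep,j}^\beta$ solves $\mathcal{L}_\varep\Psi_{\varep,j}^\beta = 0$ in $\Omega$ with $C^{0,\eta}$ conormal data $\partial P_j^\beta/\partial\nu_0$ (because $\hat{A}$ is constant and $n\in C^{0,\eta}$), the uniform boundary Lipschitz estimate for Neumann problems from \cite{KLS1} immediately yields $|\nabla\Psi_{\varep,j}^\beta|\le C$.

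Set $u_\varep = \Psi_{\varep,j}^\beta - P_j^\beta - \varep\chi_j^\beta(x/\varep)$. The combination $P_j^\beta + \varep\chi_j^\beta(x/\varep)$ is $\mathcal{L}_\varep$-harmonic in $\brd$ by (\ref{cell-problem}), so $\mathcal{L}_\varep u_\varep = 0$ in $\Omega$. Using the Neumann condition for $\Psi_{\varep,j}^\beta$ together with the definition (\ref{definition-of-B}) of $b_{ij}$, a direct calculation shows that on $\partial\Omega$,
\[
\bigl(\partial u_\varep/\partial\nu_\varep\bigr)^\alpha = n_i(x)\, b_{ij}^{\alpha\beta}(x/\varep).
\]
By (\ref{definition-of-F}) and the antisymmetry $F_{kij}^{\alpha\beta} = -F_{ikj}^{\alpha\beta}$, this boundary flux may be rewritten as $(\varep/2)(n_i\partial_{x_k} - n_k\partial_{x_i})F_{kij}^{\alpha\beta}(x/\varep)$, i.e., $\varep$ times a tangential derivative along $\partial\Omega$ of a bounded periodic function.

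For the $L^\infty$ estimate in (\ref{2.2.4}), I would use the Neumann function representation
\[
u_\varep(x) - u_\varep(x_0) = \int_{\partial\Omega}\bigl[N_\varep(x,y) - N_\varep(x_0,y)\bigr]\,\frac{\partial u_\varep}{\partial\nu_\varep}(y)\,dS(y),
\]
together with $u_\varep(x_0) = -\varep\chi_j^\beta(x_0/\varep) = O(\varep)$ from the normalization $\Psi_{\varep,j}^\beta(x_0) = P_j^\beta(x_0)$. I would split the surface integral at scale $\varep$. On $\{|x-y|<\varep\}\cap\partial\Omega$, apply the pointwise bound $|N_\varep(x,y)|\le C|x-y|^{2-d}$ from \cite{KLS2} directly, giving an $O(\varep)$ contribution. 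On $\{|x-y|\ge\varep\}\cap\partial\Omega$, integrate by parts along $\partial\Omega$ to transfer the tangential derivative from $F_{kij}(y/\varep)$ onto $N_\varep(x,y)$, and then invoke $|\nabla_y N_\varep(x,y)|\le C|x-y|^{1-d}$. The bulk integral becomes $\varep\int_\varep^M r^{-1}\,dr = \varep\ln(M/\varep)$, while the boundary term at $|x-y|=\varep$ is $O(\varep)$; this yields the desired logarithmic estimate.

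For (\ref{2.2.5}), the bound $|\nabla u_\varep|\le C$ is immediate from the first part of (\ref{2.2.4}), $|\nabla P_j^\beta| = 1$, and $|\nabla\chi_j^\beta|\le C$ (Remark \ref{remark-2.1}). When $\delta(x)>\varep$, apply the interior Lipschitz estimate of \cite{AL-1987} to $u_\varep$ on $B(x,\delta(x)/2)\subset\Omega$ and combine it with the $L^\infty$ bound just established to obtain $|\nabla u_\varep(x)|\le C\varep/\delta(x)$. Taking the minimum of these two bounds gives (\ref{2.2.5}). The hard step is the sharp $O(\varep\ln)$ bound in (\ref{2.2.4}): the Neumann flux of $u_\varep$ is only $O(1)$ pointwise, and the improvement comes solely from exploiting its tangential-derivative structure via integration by parts on $\partial\Omega$, combined with the careful split at scale $\varep$ that handles the logarithmic singularity of $\nabla_y N_\varep$ near the diagonal.
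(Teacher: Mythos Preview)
Your argument and the paper's differ in logical order, and this difference creates a genuine gap in your derivation of (\ref{2.2.5}). The paper simply cites both (\ref{2.2.5}) and the first estimate in (\ref{2.2.4}) from \cite{KLS1}, and then obtains the second estimate in (\ref{2.2.4}) \emph{from} (\ref{2.2.5}) by the Fundamental Theorem of Calculus: setting $H=\Psi_{\varep,j}^\beta-P_j^\beta-\varep\chi_j^\beta(x/\varep)$, one integrates the bound $|\nabla H|\le C\min(1,\varep/\delta)$ along a path in $\Omega$ from $x_0$ to $x$, which yields at most $C\varep\ln[\varep^{-1}M+2]$, and the normalization $H(x_0)=-\varep\chi_j^\beta(x_0/\varep)=O(\varep)$ fixes the constant. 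That is the entire argument.

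You reverse the implication: you first prove the $L^\infty$ bound directly via the Neumann-function representation and an integration by parts on $\partial\Omega$ exploiting the tangential-derivative structure of the conormal data (this outline is sound, though considerably more work than the paper's route), and then you try to deduce (\ref{2.2.5}) from it via the interior Lipschitz estimate on $B(x,\delta(x)/2)$. But that estimate gives only
\[
|\nabla u_\varep(x)|\le \frac{C}{\delta(x)}\,\|u_\varep\|_{L^\infty(\Omega)}
\le \frac{C\varep\ln[\varep^{-1}M+2]}{\delta(x)},
\]
which is weaker by a logarithmic factor than the sharp $C\varep/\delta(x)$ asserted in (\ref{2.2.5}). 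The sharp gradient bound cannot be recovered from the logarithmically weaker $L^\infty$ bound by this route; the direction of implication that works without loss is gradient $\Rightarrow$ $L^\infty$, which is exactly what the paper does. If you want to keep your order, you would need an independent proof of (\ref{2.2.5}) (as in \cite{KLS1}), at which point your boundary integration-by-parts argument for the $L^\infty$ bound becomes redundant.
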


\begin{proof} 
The estimate (\ref{2.2.5}) as well as the first estimate in (\ref{2.2.4})
was proved in \cite{KLS1}. To prove the second estimate in (\ref{2.2.4}),
we let $H_{\varep, j}^\beta =\Psi_{\varep, j}^\beta (x)-P_j^\beta (x)
-\varep \chi_j^\beta (x/\varep)$.
Since $|\nabla H_{\varep, j}^\beta (x)|
\le C \min \big(1, \varep [\delta (x)]^{-1}\big)$,
by the Fundamental Theorem of Calculus, we may deduce that
$|H_{\varep, j}^\beta (x)-H_{\varep, j}^\beta (y)|\le C\varep \ln [\varep^{-1} M +2]$
for any $x, y\in \Omega$. Since $|H_{\varep, j}^\beta (x_0)|=\varep |\chi_j^\beta (x_0)|$,
we obtain $|H_{\varep, j}^\beta (x)|\le C\varep \ln [\varep^{-1} M +2]$
for any $x\in \Omega$. This gives the desired estimate.
\end{proof}

\section{Asymptotic behavior of Green functions}

The goal of this section is to prove Theorem \ref{theorem-A}.
We also establish several convergence theorems for
solutions with Dirichlet boundary conditions.

Let $\mathcal{L}_\varep =-\text{\rm div} \big( A(x/\varep) \nabla\big)$
with $A(y)$ satisfying conditions (\ref{ellipticity}), (\ref{periodicity})
and (\ref{smoothness}).
Let $G_\varep(x,y)$ denote the matrix of
Green's functions for $\mathcal{L}_\varep$ in a bounded
domain $\Omega$. It follows from \cite{AL-1987} that
if $\Omega$ is $C^{1,\eta}$ for some $\eta\in (0,1)$,
\begin{equation}\label{Green's-size-estimate-1}
\left\{
\aligned
|G_\varep (x,y)| &\le \frac{C}{|x-y|^{d-2}}\min \left\{ 1, \frac{\delta(x)}{|x-y|},
\frac{\delta(y)}{|x-y|}, \frac{\delta(x)\delta(y)}{|x-y|^2}\right\},\\
|\nabla_x G_\varep (x,y)| & \le \frac{C}{|x-y|^{d-1}}
\min\left\{ 1, \frac{\delta(y)}{|x-y|}\right\},\\
|\nabla_y G_\varep (x,y)| & \le \frac{C}{|x-y|^{d-1}}
\min\left\{ 1, \frac{\delta(x)}{|x-y|}\right\},\\
|\nabla_x\nabla_y G_\varep (x,y)| & \le \frac{C}{|x-y|^d}
\endaligned
\right.
\end{equation}
for any $x,y\in \Omega$,
where $\delta (x)=\text{\rm dist} (x, \partial\Omega)$.
These estimates, which are well known for second-order
elliptic operators with constant coefficients,
play an essential role in our approach to Theorem \ref{theorem-A}.

\subsection{ $L^\infty$ estimates}

In this subsection we give the proof of the estimate (\ref{Green's-size}).
As a corollary of (\ref{Green's-size}), we also establish an $O(\varep)$ estimate
for $\|u_\varep -u_0\|_{L^p(\Omega)}$ for any $p>1$ (see Theorem \ref{theorem-3.1.2}).
Throughout the subsection we will assume that
 $A(y)$ satisfies conditions (\ref{ellipticity})-(\ref{periodicity}) and in the case $m>1$,
$A(y)$ is H\"older continuous. 
Let 
$$
D_r=D_r(x_0,r)=B(x_0,r)\cap\Omega \quad \text{ and } \quad \Delta_r =\Delta (x_0,r)
=B(x_0, r)\cap \partial\Omega
$$
for some $x_0\in \overline{\Omega}$ and $0<r<r_0$.

\begin{lemma}\label{lemma-3.1.0}
Assume that $\Omega$ is Lipschitz if $m=1$, and $C^{1,\eta}$ for some $\eta\in (0,1)$ if $m>1$.
Then
\begin{equation}\label{boundary-L-infty}
\| u_\varep\|_{L^\infty(D_r)}
\le C\| f\|_{L^\infty (\Delta_{3r})}
+ \frac{C}{r^d}
\int_{D_{3r}} |u_\varep|\, dx,
\end{equation}
where $\mathcal{L}_\varep (u_\varep )= 0$ in $D_{3r}$ and $u_\varep =f$
on $\Delta_{3r}$.
\end{lemma}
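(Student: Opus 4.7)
The plan is to establish this as a uniform (in $\varep$) boundary $L^\infty$ bound, obtained by rescaling to the unit scale and then applying either classical De Giorgi--Nash--Moser theory (in the scalar case $m=1$) or the Avellaneda--Lin compactness framework (in the systems case $m>1$). The scaling step is straightforward: setting $v(y)=u_\varep(x_0+ry)$, one finds that $v$ solves $\mathcal{L}_{\varep/r}(v)=0$ in the rescaled domain $B(0,3)\cap r^{-1}(\Omega-x_0)$, which is Lipschitz (respectively $C^{1,\eta}$) with uniformly controlled constants provided $r<r_0$ is small enough. Since the ellipticity, periodicity, and (when assumed) H\"older conditions are preserved, and since the estimate to be proved is scale-invariant if one tracks the $r^{-d}$ prefactor on the $L^1$ norm, it suffices to prove
\[
\|v\|_{L^\infty(D_1)}\le C\|v\|_{L^\infty(\Delta_3)}+C\|v\|_{L^1(D_3)}
\]
with a constant $C$ independent of the small parameter $\varep/r$.

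For the scalar case, I would apply the classical boundary subsolution estimate. Setting $K=\|v\|_{L^\infty(\Delta_3)}$, the functions $(v-K)_+$ and $(v+K)_-$ are nonnegative subsolutions vanishing on $\Delta_3$, so they may be extended by zero across $\Delta_3$ and Moser iteration (which depends only on the ellipticity constant $\mu$, the dimension, and the Lipschitz character of the boundary) gives $\|(v-K)_+\|_{L^\infty(D_1)}\le C\|(v-K)_+\|_{L^1(D_3)}$ and similarly for $(v+K)_-$; adding these bounds yields the unit-scale estimate. No periodicity or smoothness of $A$ is needed here. For the systems case, the scalar maximum principle is not available, and I would instead invoke the uniform boundary H\"older estimate of Avellaneda--Lin \cite{AL-1987}: for $\mathcal{L}_\varep(v)=0$ in $D_3$ with $v=f$ on $\Delta_3$, one has $\|v\|_{L^\infty(D_1)}\le C(\|f\|_{L^\infty(\Delta_3)}+\|v\|_{L^2(D_3)})$ uniformly in $\varep$. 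A standard Cacciopoli-plus-H\"older self-improvement converts the $L^2$ norm on the right to an $L^1$ norm; alternatively, one may directly apply Avellaneda--Lin's boundary H\"older bound together with a $L^1$-to-$L^2$ bootstrap on the solution.

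The main obstacle, of course, is the uniformity of the constant $C$ in the small parameter $\varep$, which at the unit scale becomes $\varep/r$. In the scalar case this uniformity is automatic, since De Giorgi--Nash--Moser depends only on ellipticity and the Lipschitz character of the boundary. In the systems case, uniformity is precisely what the compactness method of Avellaneda--Lin was designed to deliver, and so the lemma reduces to invoking that result. Everything else is bookkeeping: rescaling back from unit scale introduces the factor $r^{-d}$ in front of the $L^1$ integral, recovering the form stated.
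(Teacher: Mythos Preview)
Your outline is correct and follows the same overall strategy as the paper: rescale to $r=1$, use De~Giorgi--Nash--Moser for $m=1$, and invoke the Avellaneda--Lin framework \cite{AL-1987} for $m>1$.

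The one place where you gloss over a step that the paper makes explicit is the systems case with nonzero boundary data. You assert that \cite{AL-1987} directly gives
\[
\|v\|_{L^\infty(D_1)}\le C\big(\|f\|_{L^\infty(\Delta_3)}+\|v\|_{L^2(D_3)}\big),
\]
but the local estimate in \cite{AL-1987} that the paper cites (Lemma~12 there) is stated for \emph{zero} Dirichlet data on $\Delta_3$. To reduce to that case, the paper introduces an auxiliary $C^{1,\eta}$ domain $\widetilde{D}$ with $D_2\subset\widetilde{D}\subset D_3$ and solves the global Dirichlet problem $\mathcal{L}_\varep(v_\varep)=0$ in $\widetilde{D}$ with $v_\varep=f$ on $\partial\widetilde{D}\cap\partial\Omega$ and $v_\varep=0$ elsewhere on $\partial\widetilde{D}$; then $\|v_\varep\|_{L^\infty}\le C\|f\|_{L^\infty(\Delta_3)}$ by the global Agmon--Douglis--Nirenberg--type bound \cite[Theorem~3]{AL-1987}, while $u_\varep-v_\varep$ has zero data on $\Delta_2$ and is handled by Lemma~12. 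Your formulation effectively assumes this splitting has already been carried out; once that is made explicit, the argument is identical to the paper's.
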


\begin{proof}
By rescaling we may assume that $r=1$.
The estimate is well known in the case $m=1$ and follows from the maximum principle
and De Giorgi -Nash estimate.
If $m>1$ and $f=0$, estimate (\ref{boundary-L-infty}) follows directly 
from \cite[Lemma 12]{AL-1987}.
To treat the general case, consider $\mathcal{L}_\varep (v_\varep)=0$ 
in $\widetilde{D}$ with the Dirichlet data
$v_\varep =f$ on $\partial\widetilde{D}\cap \partial\Omega$ and
$v_\varep =0$ on $\partial\widetilde{D}\setminus \partial\Omega$, where
$\widetilde{D}$ is a $C^{1,\eta}$ domain such that
$D_{2}\subset \widetilde{D}\subset D_{3}$.
Note that $\|v_\varep\|_{L^\infty(D_{2})} \le C\| f\|_{L^\infty(\Delta_{3})}$
by \cite[Theorem 3]{AL-1987}, and $u_\varep-v_\varep$ may be handled
by \cite[Lemma 12]{AL-1987}, as before.
 \end{proof}

The next lemma provides a boundary $L^\infty$ estimate.

\begin{lemma}\label{lemma-3.1.1}
Assume that $\Omega$ satisfies the same assumption as in Lemma \ref{lemma-3.1.0}.
Let $u_\varep \in H^1(D_{4r})$ and $u_0\in W^{2,p}(D_{4r})$ for some $d<p\le \infty$. Suppose that
$$
\mathcal{L}_\varep (u_\varep) =\mathcal{L}_0 (u_0) \quad \text{ in } D_{4r}
\quad \text{ and } \quad 
u_\varep =u_0 \quad \text{ on } \Delta_{4r}.
$$
Then,
\begin{equation}\label{estimate-3.1.1}
\aligned
\| u_\varep -u_0\|_{L^\infty (D_r)}
\le   \frac{C}{r^d} \int_{D_{4r}} |u_\varep - & u_0|\, dx
+C\varep \|\nabla u_0\|_{L^\infty (D_{4r})}\\
& +C_p\, \varep r^{1-\frac{d}{p}} \, \|\nabla^2 u_0\|_{L^p (D_{4r})}.
\endaligned
\end{equation}
\end{lemma}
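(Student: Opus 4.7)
The plan is to apply Proposition~\ref{prop-2.1} with the simple choice $V_{\varep,j}^\beta(x)=P_j^\beta(x)+\varep\chi_j^\beta(x/\varep)$, which satisfies $\mathcal{L}_\varep V_{\varep,j}^\beta=0$ by (\ref{cell-problem}). Setting
$$
w_\varep(x) := u_\varep(x)-u_0(x)-\varep\chi_j^\beta(x/\varep)\,\frac{\partial u_0^\beta}{\partial x_j},
$$
the third line of (\ref{formula-2.1}) vanishes identically for this choice, and the first two lines collapse to
$$
(\mathcal{L}_\varep w_\varep)^\alpha \;=\; \varep\,\frac{\partial}{\partial x_i}\bigl\{h_i^\alpha(x)\bigr\}\qquad\text{in } D_{4r},
$$
where $h_i^\alpha$ is a pointwise-linear combination of entries of $F(x/\varep)$, $a(x/\varep)$, and $\chi(x/\varep)$ contracted with $\nabla^2 u_0$. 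By Remark~\ref{remark-2.1}, $\|F\|_\infty+\|\chi\|_\infty\le C$, and hence $|h_i^\alpha(x)|\le C|\nabla^2 u_0(x)|$ pointwise.

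Next, fix a $C^{1,\eta}$ (Lipschitz, if $m=1$) domain $\wD$ with $D_{3r}\subset\wD\subset D_{4r}$ whose regularity constants are independent of $r$ (a standard rescaling construction), and let $v_\varep\in H_0^1(\wD)$ solve $\mathcal{L}_\varep v_\varep=\mathcal{L}_\varep w_\varep$ in $\wD$. Put $z_\varep:=w_\varep-v_\varep$, so that $\mathcal{L}_\varep z_\varep=0$ in $\wD$ with $z_\varep=w_\varep$ on $\partial\wD$. Representing $v_\varep$ via the Dirichlet Green function $\widetilde{G}_\varep$ for $\mathcal{L}_\varep$ in $\wD$ and integrating by parts,
$$
v_\varep(x) \;=\; -\varep\int_{\wD}\frac{\partial\widetilde{G}_\varep}{\partial y_i}(x,y)\,h_i(y)\,dy.
$$
The bound $|\nabla_y\widetilde{G}_\varep(x,y)|\le C|x-y|^{1-d}$ from (\ref{Green's-size-estimate-1}) together with H\"older's inequality yields, for any $p>d$,
$$
\|v_\varep\|_{L^\infty(\wD)} \;\le\; C_p\,\varep\,r^{1-d/p}\,\|\nabla^2 u_0\|_{L^p(D_{4r})},
$$
the restriction $p>d$ being precisely what forces $\int_{\wD}|x-y|^{(1-d)p'}\,dy$ to converge.

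Finally, since $u_\varep=u_0$ on $\Delta_{4r}$, the boundary values of $z_\varep$ on $\Delta_{3r}$ reduce to $-\varep\chi_j^\beta(x/\varep)\,\partial u_0^\beta/\partial x_j$, whose $L^\infty$-norm is at most $C\varep\|\nabla u_0\|_{L^\infty(D_{4r})}$. Because $\mathcal{L}_\varep z_\varep=0$ in $D_{3r}\subset\wD$, Lemma~\ref{lemma-3.1.0} applied to $z_\varep$ gives
$$
\|z_\varep\|_{L^\infty(D_r)} \;\le\; C\varep\|\nabla u_0\|_{L^\infty(D_{4r})} + \frac{C}{r^d}\int_{D_{3r}}|z_\varep|\,dx.
$$
Estimate (\ref{estimate-3.1.1}) then follows by writing $u_\varep-u_0=z_\varep+v_\varep+\varep\chi(x/\varep)\nabla u_0$, using $|z_\varep|\le|w_\varep|+|v_\varep|$, and combining $\int_{D_{4r}}|w_\varep|\le\int_{D_{4r}}|u_\varep-u_0|+C\varep r^d\|\nabla u_0\|_{L^\infty(D_{4r})}$ with $\int_{D_{3r}}|v_\varep|\le Cr^d\|v_\varep\|_{L^\infty(\wD)}$. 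The only nonroutine ingredient is the integrability of the Green-function tail (which dictates the hypothesis $p>d$); all other pieces are direct applications of Proposition~\ref{prop-2.1}, Remark~\ref{remark-2.1}, and Lemma~\ref{lemma-3.1.0}.
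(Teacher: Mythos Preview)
Your proof is correct and follows essentially the same decomposition as the paper: the paper's $w_\varep^{(1)}$ and $w_\varep^{(2)}$ are your $v_\varep$ and $z_\varep$, with the first handled by the Green function representation and the second by Lemma~\ref{lemma-3.1.0}. One small caveat: in the $m=1$ non-smooth case the pointwise bound $|\nabla_y\widetilde{G}_\varep(x,y)|\le C|x-y|^{1-d}$ from (\ref{Green's-size-estimate-1}) is not directly available, and the paper instead invokes only the weaker integral bound $\|\nabla_y\widetilde{G}_\varep(x,\cdot)\|_{L^{p'}(\widetilde{D})}\le C_p$, which suffices for the H\"older step and holds in that generality.
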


\begin{proof}
Note that if $\mathcal{L}_\varep (u_\varep)=F$, then
$\mathcal{L}_{\varep/r} (v)=F_1$, where
$v(x)=r^{-2} u_\varep (rx)$ and $F_1 (x)=F(rx)$.
Thus, by rescaling, it suffices to consider the case $r=1$.
To this end we choose a domain $\widetilde{D}$, which is $C^{1, \eta}$ for $m>1$
and Lipschitz for $m=1$,
such that $D_{3}\subset \widetilde{D}\subset
D_{4}$.
Consider
$$
w_\varep =u_\varep -u_0 -\varep\chi_j^\beta \left({x}/{\varep}\right)
 \frac{\partial u_0^\beta}
{\partial x_j}
 =w_\varep^{(1)} +w_\varep^{(2)} \quad \text{ in } \widetilde{D},
 $$
 where
\begin{equation}\label{definition-w-1}
 \mathcal{L}_\varep (w_\varep^{(1)})  =\mathcal{L}_\varep (w_\varep) \quad \text{ in } \widetilde{D}
\quad \text{ and } \quad w_\varep^{(1)}\in H^1_0 (\widetilde{D})
\end{equation}
and
\begin{equation}\label{definition-w-2}
\mathcal{L}_\varep (w_\varep^{(2)}) =0 \quad \text{ in } \widetilde{D}
\quad \text{ and } \quad
w_\varep^{(2)} =w_\varep \quad \text{ on } \partial \widetilde{D}.
\end{equation}
Since $w_\varep^{(2)}=w_\varep=-\varep \chi (x/\varep)\nabla u_0$ 
on $\Delta_{3}$ and $\|\chi\|_\infty\le C$, 
it  follows from Lemma \ref{lemma-3.1.0} that
$$
\aligned
\|w_\varep^{(2)}\|_{L^\infty(D_1)}
&\le C\varep \|\nabla u_0\|_{L^\infty(\Delta_{3})}
+C\int_{D_{3}} |w_\varep^{(2)}|\, dx \\
&\le  C\varep \|\nabla u_0\|_{L^\infty(\Delta_{3})}
+C\int_{D_{3}} |w_\varep|\, dx
+  C\int_{D_{3}} |w_\varep^{(1)}|\, dx\\
&\le C\int_{D_{3}} |u_\varep -u_0|\, dx
+C\varep \|\nabla u_0\|_{L^\infty(D_{3})} 
+ C\|w_\varep^{(1)}\|_{L^\infty(D_{3})}.
\endaligned
$$
This gives
\begin{equation}\label{3.1.1}
\| u_\varep -u_0\|_{L^\infty(D_1)}\\
 \le C\int_{D_{3}} |u_\varep -u_0|\, dx
+C\varep \|\nabla u_0\|_{L^\infty(D_{3})} 
+ C\|w_\varep^{(1)}\|_{L^\infty(D_{3})}.
\end{equation}

To estimate $w_\varep^{(1)}$ in $D_{3}$, we use the Green function representation
$$
w_\varep^{(1)} (x)
=\int_{\widetilde{D}}
\widetilde{G}_\varep (x,y) \mathcal{L}_\varep (w_\varep) (y)\, dy,
$$
where $\widetilde{G}_\varep (x,y)$ denotes
 the matrix of Green functions for $\mathcal{L}_\varep$
in $\widetilde{D}$.
Using (\ref{formula-2.1}) with $V_{\varep, j}^\beta
=P_j^\beta (x)+\varep \chi_j^\beta (x/\varep)$, we obtain
$$
w_\varep^{(1)} (x)
= -\varep \int_{\widetilde{D}}
\frac{\partial}{\partial y_i} \big\{ \widetilde{G}_\varep (x,y)\big\}
\cdot  \left[ F_{jik} \left({y}/{\varep}\right)
+a_{ij}\left({y}/{\varep}\right) \chi_k \left({y}/{\varep}\right) \right]
\cdot \frac{\partial^2 u_0}{\partial y_j \partial y_k}\, dy,
$$
where we have suppressed the superscripts for notational simplicity.
Note that by Remark \ref{remark-2.1}, $\|F_{jik}\|_\infty \le C$.
It follows that
\begin{equation}\label{3.1.3}
\aligned
|w_\varep^{(1)} (x)|
\le & C\varep \int_{\widetilde{D}} |\nabla_y \widetilde{G}_\varep (x,y)|\, |\nabla^2 u_0 (y)|\, dy\\
&\le C\varep \|\nabla^2 u_0\|_{L^p(D_{4})}
\left\{ \int_{\widetilde{D}} |\nabla_y \widetilde{G}_\varep (x,y)|^{p^\prime}\, dy\right\}^{1/p^\prime}\\
&\le C_p\, \varep \|\nabla^2 u_0\|_{L^p(D_{4})}
\endaligned
\end{equation}
if $p>d$, 
where we have used H\"older's inequality and $\|\nabla_y \widetilde{G}_\varep(x,\cdot)\|_{L^{p^\prime}
(\wD)}\le C_p$.
This, together with (\ref{3.1.1}), completes the proof.
\end{proof}

We are now ready to prove the first estimate in Theorem \ref{theorem-A}.
Note that in the scalar case $m=1$, no smoothness condition on $A(y)$ is needed
in the following theorem.

\begin{thm}\label{theorem-3.1.1}
Suppose that $A(y)$ satisfies conditions (\ref{ellipticity})-(\ref{periodicity}).
If $m>1$, we also assume that $A(y)$ is H\"older continuous.
Let $\Omega$ be a bounded $C^{1,1}$ domain.
Then 
\begin{equation}\label{estimate-3.1.2}
|G_\varep (x,y) -G_0 (x,y)|\le \frac{C\varep}{|x-y|^{d-1}}\quad
\text{ for any } x,y\in \Omega,
\end{equation}
where $C$ depends only on $d$, $m$, $\mu$, $\Omega$ as well as $\lambda$ and $\tau $ (if $m>1$).
\end{thm}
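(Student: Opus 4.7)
Fix distinct $x,y\in\Omega$ and set $\rho=|x-y|$. If $\rho\le\varep$, the bound follows immediately from the pointwise estimates $|G_\varep(x,y)|,|G_0(x,y)|\le C|x-y|^{2-d}$ (see (\ref{1.17}) and (\ref{Green's-size-estimate-1})): one has $|G_\varep-G_0|(x,y)\le C|x-y|^{2-d}\le C\varep|x-y|^{1-d}$.

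For $\rho>\varep$, set $r=\rho/8$ and choose a reference point $x_0\in\overline\Omega$ with $x\in D(x_0,r)$ (take $x_0\in\partial\Omega$ nearest to $x$ if $\delta(x)<r$, otherwise $x_0=x$), so that $y\notin D(x_0,4r)$. In that set $u_\varep(\cdot):=G_\varep(\cdot,y)$ and $u_0(\cdot):=G_0(\cdot,y)$ both solve the corresponding homogeneous equations and agree ($=0$) on $\Delta(x_0,4r)$. I apply Lemma~\ref{lemma-3.1.1} with some $p>d$. Because $\Omega$ is $C^{1,1}$ and $\mathcal L_0$ has constant coefficients, the classical Agmon--Douglis--Nirenberg estimates for $G_0$ yield $\|\nabla_x G_0(\cdot,y)\|_{L^\infty(D_{4r})}\le C\rho^{1-d}$ and $\|\nabla_x^2 G_0(\cdot,y)\|_{L^\infty(D_{4r})}\le C\rho^{-d}$, so the last two terms in Lemma~\ref{lemma-3.1.1} combine to at most $C\varep\rho^{1-d}$. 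The remaining task is the $L^1$ bound
\[
\int_{D(x_0,4r)}|G_\varep(\cdot,y)-G_0(\cdot,y)|\,dx'\le C\varep\rho,
\]
which after division by $r^d\sim\rho^d$ yields the desired $C\varep\rho^{1-d}$.

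I would obtain this $L^1$ bound by duality: for any $F\in L^\infty(D(x_0,4r))$ with $\|F\|_\infty\le 1$, write $\int(G_\varep-G_0)(\cdot,y)F\,dx'=(v_\varep-v_0)(y)$, where $v_\varep,v_0$ solve $\mathcal L_\varep^* v_\varep=F\chi_{D(x_0,4r)}$ and $\mathcal L_0^* v_0=F\chi_{D(x_0,4r)}$ with zero Dirichlet data. Applying Proposition~\ref{prop-2.1} to $v_\varep-v_0$ with $V_{\varep,j}^\beta=P_j^\beta+\varep\chi_j^{*,\beta}(\cdot/\varep)$ (the full-space corrector for the adjoint) shows that the auxiliary $w_\varep:=v_\varep-v_0-\varep\chi^*(\cdot/\varep)\nabla v_0$ satisfies $\mathcal L_\varep^* w_\varep=\varep\,\partial_i\bigl[\text{bounded}(\cdot/\varep)\cdot\nabla^2 v_0\bigr]$. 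Representing $w_\varep$ by the Green's formula, integrating by parts, and exploiting the antisymmetry $F_{kij}=-F_{ikj}$ together with the boundedness of $F_{kij}$ (Remark~\ref{remark-2.1}) transfers the $\varep$ to a prefactor. The singularity of $\nabla^2 v_0$ at $y$ is handled by splitting into $B(y,\varep)$ (treated directly via pointwise bounds on $v_\varep,v_0$ and on $\nabla G_\varep^*(y,\cdot)$ from (\ref{Green's-size-estimate-1})) and its complement (where the $\varep$-improved formula applies); the boundary correction from $\varep\chi^*(\cdot/\varep)\nabla v_0$ on $\partial\Omega$ is absorbed using Proposition~\ref{prop-2.2}.

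\emph{Main obstacle.} Achieving the sharp $L^1$ estimate of order $\varep\rho$, rather than a weaker $\varep\rho^{1-\delta}$ (which is what a crude duality against $L^p$ with $p>d$ would give, since then $\|F\|_{L^p}\le C r^{d/p}$ costs a factor $r^{d/p-1}$), is the technical heart of the proof. It relies essentially on the antisymmetric divergence structure of Remark~\ref{remark-2.1}, which after integration by parts eliminates one full derivative while retaining boundedness of $F_{kij}$, and on the fact that Proposition~\ref{prop-2.2} gives Lipschitz control of $\Phi_{\varep,j}^\beta$ up to the boundary so that no log-type loss appears. The scalar case $m=1$ is notably simpler: Remark~\ref{remark-2.1} supplies bounded $F_{kij}$ from the De Giorgi--Nash estimate alone, without any regularity hypothesis on $A$, explaining the absence of the smoothness condition in Theorem~\ref{theorem-3.1.1} when $m=1$.
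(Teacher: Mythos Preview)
Your overall architecture matches the paper's: reduce to the case $\varep<|x-y|$, apply Lemma~\ref{lemma-3.1.1} locally, and handle the averaged term by duality combined with Proposition~\ref{prop-2.1} (using $V_{\varep,j}^\beta=P_j^\beta+\varep\chi_j^\beta(x/\varep)$) and the splitting $w_\varep=\theta_\varep+z_\varep$ into an $H_0^1$ piece and a boundary piece controlled by the maximum principle. The organization, however, differs: the paper tests against $f\in C_0^\infty(D(y_0,r))$, controls $\theta_\varep$ by the \emph{global energy estimate} $\|\theta_\varep\|_{H_0^1(\Omega)}\le C\varep\|\nabla^2 u_0\|_{L^2(\Omega)}\le C\varep\|f\|_{L^2}$ followed by Sobolev embedding and H\"older localization to $D(x_0,r)$, then invokes Lemma~\ref{lemma-3.1.1} at $x_0$; duality yields an $L^{p'}$ bound on $G_\varep(x_0,\cdot)-G_0(x_0,\cdot)$ over $D(y_0,r)$, and a \emph{second} application of Lemma~\ref{lemma-3.1.1} (now in the $y$-variable, for $\mathcal L_\varep^*$) gives the pointwise bound. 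This two-pass use of Lemma~\ref{lemma-3.1.1} plus the soft energy/Sobolev step replaces your proposed direct Green-representation integral for $\theta_\varep(y)$ and is considerably cleaner.

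A few inaccuracies in your write-up are worth flagging. First, on a $C^{1,1}$ domain you cannot assert $\|\nabla_x^2 G_0(\cdot,y)\|_{L^\infty(D_{4r})}\le C\rho^{-d}$; only the $W^{2,p}$ bound $\|\nabla_x^2 G_0(\cdot,y)\|_{L^p(D_{4r})}\le C_p\,r^{d/p}\rho^{-d}$ is available, and this is exactly what Lemma~\ref{lemma-3.1.1} requires. Second, there is no ``singularity of $\nabla^2 v_0$ at $y$'': since $F$ is supported in $D(x_0,4r)$ and $y$ lies at distance $\sim\rho$ from that set, $v_0$ is smooth near $y$; the singularity to manage in your representation is that of $\nabla_z G_\varep^*(y,\cdot)$ at $z=y$. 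Third, the antisymmetry $F_{kij}=-F_{ikj}$ is already absorbed into the divergence form of Proposition~\ref{prop-2.1} and plays no further role in this estimate, and the boundary correction $-\varep\chi^*(\cdot/\varep)\nabla v_0$ is handled by the maximum principle on the $z_\varep$ piece (using $\|\nabla v_0\|_{L^\infty(\partial\Omega)}\le Cr$), not by Proposition~\ref{prop-2.2}. None of these is fatal---your integral approach to $|\theta_\varep(y)|\le C\varep r$ can be made to work by splitting according to $|z-x_0|\lessgtr 5r$ and $|z-y|\lessgtr r$---but the paper's energy/Sobolev route avoids that bookkeeping entirely.
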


\begin{proof}
Under the assumptions on $A$ and $\Omega$, the estimates
$|G_\varep (x,y)|\le C\, |x-y|^{2-d}$ and 
$|\nabla_x G_0(x,y)|\le C|x-y|^{1-d}$ hold for any $x,y\in \Omega$ and $\varep\ge 0$.
We now fix $x_0,y_0\in \Omega$ and $r=|x_0-y_0|/8$. Let $f\in C_0^\infty (D(y_0,r))$,
$$
u_\varep (x) =\int_\Omega G_\varep (x,y) f(y)\, dy \quad \text{ and }\quad
 u_0 (x) =\int_\Omega G_0 (x,y) f(y)\, dy.
$$
Then $\mathcal{L}_\varep (u_\varep) =\mathcal{L}_0 (u_0)=f$ in $\Omega$ and
$u_\varep =u_0 =0$ on $\partial\Omega$. Also, note that since $\Omega$ is $C^{1,1}$,
\begin{equation}\label{3.1.1-0}
\left\{
\aligned
&\|\nabla^2 u_0\|_{L^p(\Omega)} \le C_p \| f\|_{L^p(D(y_0,r))}\quad
\text{ for any } 
1<p<\infty,\\ 
&\|\nabla u_0\|_{L^\infty(\Omega)}
\le C_p\,  r^{1-\frac{d}{p}} \| f\|_{L^p(D(y_0, r))}\quad\text{ for any } p>d
\endaligned
\right.
\end{equation}
(see e. g. \cite{Gilbarg-Trudinger}).

Next, let
$$
w_\varep =u_\varep -u_0-\varep\chi_j^\beta \left({x}/{\varep}\right)
\frac{\partial u_0^\beta}{\partial x_j}
=\theta_\varep (x) +z_\varep (x),
$$
where $\theta_\varep\in H_0^1(\Omega)$ and $\mathcal{L}_\varep (\theta_\varep)
=\mathcal{L}_\varep (w_\varep)$ in $\Omega$.  Observe that by (\ref{formula-2.1}) 
with $V_{j,\varep}^\beta =P_j^\beta (x)+\varep\chi_j^\beta (x/\varep)$ and
Remark \ref{remark-2.1},
$\|\theta_\varep\|_{H^1_0(\Omega)} \le C\, \varep \| \nabla^2 u_0\|_{L^2(\Omega)}
\le C\, \varep \| f\|_{L^2(D(y_0,r_0))}$.
By H\"older's and Sobolev's inequalities, this implies that
\begin{equation}\label{3.1.1-1}
\|\theta_\varep\|_{L^2(D(x_0,r))}
\le C\, r \|\theta_\varep\|_{L^q(\Omega)}
\le C\, \varep r\| f\|_{L^2(D(y_0, r))}
\le C\, \varep r^{1+\frac{d}{2}-\frac{d}{p}} \| f\|_{L^p(D(y_0,r))},
\end{equation}
where $q=\frac{2d}{d-2}$ and $p>d$.
Also, note that since $\mathcal{L}_\varep (z_\varep)=0$ in $\Omega$ and
$z_\varep =w_\varep$ on $\partial\Omega$, 
\begin{equation}\label{3.1.1-2}
\|z_\varep\|_{L^\infty(\Omega)}\le C\| z_\varep\|_{L^\infty(\partial\Omega)}
\le C\, \varep \|\nabla u_0\|_{L^\infty(\partial\Omega)}.
\end{equation}
In view of (\ref{3.1.1-0})-(\ref{3.1.1-2}), we obtain
\begin{equation}\label{3.1.1-3}
\aligned
\| u_\varep -u_0\|_{L^2(D(x_0,r))}
&\le \|\theta_\varep\|_{L^2(D(x_0,r))} +\|z_\varep\|_{L^2(D(x_0,r))} +
C\, \varep r^{\frac{d}{2}}\|\nabla u_0\|_{L^\infty(\Omega)}\\
&\le \|\theta_\varep\|_{L^2(D(x_0,r))} +
C\, \varep r^{\frac{d}{2}}\|\nabla u_0\|_{L^\infty(\Omega)}\\
&\le C\, \varep r^{1+\frac{d}{2}-\frac{d}{p}} \| f\|_{L^p(D(y_0, r))},
\endaligned
\end{equation}
where $p>d$.
This, together with Lemma \ref{lemma-3.1.1} and (\ref{3.1.1-0}), gives
$$
|u_\varep (x_0)-u_0(x_0)| \le C_p\, \varep r^{1-\frac{d}{p}} \| f\|_{L^p(D(y_0, r))}.
$$
It then follows by duality that
$$
\left\{ \int_{D(y_0,r)} |G_\varep(x_0, y)-G_0(x_0, y)|^{p^\prime}\, dy\right\}^{1/p^\prime}
\le C_p\, \varep r^{1-\frac{d}{p}} \quad \text{ for any } p>d.
$$

Finally, since $\mathcal{L}_\varep^* \big( G_\varep (x_0, \cdot)\big)
=\mathcal{L}_0^* \big(G_0(x_0, \cdot)\big)=0$ in $D(y_0,r)$,
we may invoke Lemma \ref{lemma-3.1.1} again to conclude that
$$
\aligned
 |G_\varep (x_0, y_0)-G_0 (x_0, y_0)|
& \le \frac{C}{r^d}
\int_{D(y_0, r)} |G_\varep ( x_0, y)-G_0 (x_0, y)|\, dy\\
&\qquad\qquad  + C\,\varep \|\nabla_y G_0(x_0,\cdot)\|_{L^\infty (D(y_0, r))}\\
&\qquad \qquad +C_p \, \varep r^{1-\frac{d}{p}} \|\nabla^2_y G_0(x_0, \cdot)\|_{L^p(D(y_0, r_0))}\\
&\le C\, \varep r^{1-d},
\endaligned
$$
where we also used 
$$
\left\{ \frac{1}{r^d}
\int_{D(y_0,r)} |\nabla_y^2 G_0(x_0,y)|^p\, dy\right\}^{1/p}
\le C_p\, r^{-2} \| G_0(x_0, \cdot)\|_{L^\infty (D(y_0, 2r))}
\le C_p\, r^{-d},
$$
obtained by the boundary $W^{2,p}$ estimates on $C^{1,1}$ domains \cite{Gilbarg-Trudinger}.
This completes the proof.
\end{proof}

As a corollary of estimate (\ref{estimate-3.1.2}), we obtain an $O(\varep)$ estimate
for $\|u_\varep -u_0\|_{L^p(\Omega)}$ for any $p>1$. 
In particular, we recover the estimate $\| u_\varep-u_0\|_{L^2(\Omega)} 
\le C \varep \| F\|_{L^2(\Omega)}$, 
proved in \cite{Griso-2006} for scalar equations with bounded measurable coefficients satisfying
(\ref{ellipticity})-(\ref{periodicity}).
Also see \cite{KLS2} for estimates of $\|u_\varep-u_0\|_{L^2(\Omega)}$
on Lipschitz domains.
 
\begin{thm}\label{theorem-3.1.2}
Suppose that $A(y)$ and $\Omega$ satsify the same conditions as in Theorem \ref{theorem-3.1.1}.
For $F\in L^2(\Omega)$ and $\varep\ge 0$, let $u_\varep \in H_0^1(\Omega)$ be the solution
of $\mathcal{L}_\varep (u_\varep)=F$ in $\Omega$.
Then the estimate
\begin{equation}\label{estimate-3.1.7}
\| u_\varep -u_0\|_{L^q (\Omega)} \le C\varep \| F\|_{L^p(\Omega)}
\end{equation}
holds if $1<p<d$ and $\frac{1}{q}=\frac{1}{p}-\frac{1}{d}$, or $p>d$ and $q=\infty$.
Moreover,
\begin{equation}\label{estimate-3.1.8}
\| u_\varep -u_0\|_{L^\infty(\Omega)} 
\le C\varep \big[\ln \big(\varep^{-1}M +2\big)\big]^{1-\frac{1}{d}} \| F\|_{L^d 
(\Omega)},
\end{equation}
where $M=\text{\rm diam} (\Omega)$.
\end{thm}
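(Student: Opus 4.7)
The plan is to start from the Green function representation
$$
u_\varep(x) - u_0(x) = \int_\Omega \bigl[G_\varep(x,y) - G_0(x,y)\bigr] F(y)\, dy,
$$
apply the pointwise bound $|G_\varep(x,y) - G_0(x,y)| \le C\varep\,|x-y|^{1-d}$ from (\ref{estimate-3.1.2}), and reduce the problem to known mapping properties of the Riesz potential of order one on $\Omega$.

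For $1 < p < d$, the operator $F \mapsto \int_\Omega |x-y|^{1-d} F(y)\, dy$ coincides up to a constant with the restriction of the Riesz potential $I_1$ to $\Omega$, which is bounded from $L^p(\Omega)$ into $L^q(\Omega)$ with $1/q = 1/p - 1/d$ by the classical Hardy-Littlewood-Sobolev inequality. Multiplying by the $\varep$ prefactor from (\ref{estimate-3.1.2}) gives (\ref{estimate-3.1.7}) directly. For $p > d$, I would apply H\"older's inequality pointwise with exponents $(p, p')$; since $(d-1)p' < d$ in this range, the kernel integral $\int_\Omega |x-y|^{-(d-1)p'}\, dy$ is bounded uniformly in $x \in \Omega$ by a constant depending only on $\Omega$ and $p$, producing the $L^\infty$ bound of order $\varep \|F\|_{L^p(\Omega)}$.

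The main obstacle is the endpoint $p = d$, where the H\"older conjugate exponent is $p' = d/(d-1)$ and the kernel $|x-y|^{-(d-1)p'} = |x-y|^{-d}$ fails to be integrable. The plan is to split the integral at the scale $|x-y| = \varep$. On the outer piece $|x-y| \ge \varep$, H\"older's inequality combined with (\ref{estimate-3.1.2}) produces a factor $\bigl(\int_\varep^M \rho^{-1}\, d\rho\bigr)^{(d-1)/d}$, which is exactly the logarithmic correction $[\ln(\varep^{-1}M + 2)]^{1-1/d}$. On the inner piece $|x-y| < \varep$, the difference estimate (\ref{estimate-3.1.2}) is no longer sharper than the individual bounds, so I would discard it and instead use the standard size estimate $|G_\varep(x,y)| + |G_0(x,y)| \le C|x-y|^{2-d}$ from (\ref{Green's-size-estimate-1}). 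The H\"older dual kernel now becomes $|x-y|^{-d(d-2)/(d-1)}$, which is integrable near the pole, and a short calculation shows that its integral over $\{|x-y|<\varep\}$ is of order $\varep^{d/(d-1)}$; taking the $(d-1)/d$ power bounds the inner contribution by $C\varep \|F\|_{L^d(\Omega)}$. Adding the two pieces yields (\ref{estimate-3.1.8}).
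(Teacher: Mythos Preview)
Your proof is correct and follows essentially the same route as the paper: the Green function representation combined with (\ref{estimate-3.1.2}) reduces (\ref{estimate-3.1.7}) to Hardy--Littlewood--Sobolev for $1<p<d$ and to H\"older's inequality for $p>d$, while for $p=d$ the paper performs exactly the same splitting at scale $\varep$, using $|G_\varep-G_0|\le C|x-y|^{2-d}$ on the inner piece and $|G_\varep-G_0|\le C\varep|x-y|^{1-d}$ on the outer piece, followed by H\"older.
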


\begin{proof}
It follows from the Green function representation and Theorem \ref{theorem-3.1.1} that
$$
|u_\varep (x)-u_0(x)|\le C\, \varep \int_\Omega \frac{|F(y)|}{|x-y|^{d-1}}\, dy,
\quad \text{ for any } x\in \Omega.
$$
This leads to (\ref{estimate-3.1.7}) for $1<p<d$ and $\frac{1}{q}=\frac{1}{p}
-\frac{1}{d}$ by the well known estimates for fractional integrals.
The case of $p>d$ and $q=\infty$ follows directly from H\"older's inequality.
To see (\ref{estimate-3.1.8}), we bound $|G_\varep (x,y)-G_0(x,y)|$ by $ C|x-y|^{2-d}$ if
$|x-y|<\varep$, and by $C\varep |x-y|^{1-d}$ if $|x-y|\ge \varep$.
By H\"older's inequality, this gives
$$
\aligned
|u_\varep (x)-u_0(x)| &\le C\int_{D(x,\varep)} \frac{|F(y)|}{|x-y|^{d-2}}\, dy
+ C\varep \int_{\Omega\setminus D(x,\varep)} \frac{|F(y)|}{|x-y|^{d-1}}\, dy\\
& \le C\varep \| F\|_{L^d(\Omega)}
+C\varep \big[\ln \left( \varep^{-1}M +2\right)\big]^{1-\frac{1}{d}} \| F\|_{L^d(\Omega)}\\
&\le C\varep \big[\ln \left( \varep^{-1}M +2\right)\big]^{1-\frac{1}{d}}
 \| F\|_{L^d(\Omega)},
\endaligned
$$
which completes the proof.
\end{proof}

\subsection{Lipschitz estimates}

In this subsection we give the proof of (\ref{Green's-derivative}).
As a corollary of (\ref{Green's-derivative}), we also obtain an $O(\varep)$
estimate for $u_\varep -u_0-\{ \Phi_{\varep,j} -P_j\} \frac{\partial u_0}{\partial x_j}$
in ${W^{1,p}_0(\Omega)}$
for any $1<p<\infty$.

Recall that $D(r)=D(x_0,r)=B(x_0,r)\cap\Omega$ and $\Delta_r =\Delta(x_0,r)
=B(x_0, r)\cap\partial\Omega$, where $x_0\in \overline{\Omega}$ and $0<r<r_0$.
Throughout this subsection we will assume that
$\Omega$ is a bounded $C^{2,\eta}$ domain for some $\eta\in (0,1)$ and 
$A=A(y)$ satisfies conditions (\ref{ellipticity}), (\ref{periodicity}) and
(\ref{smoothness}).

\begin{lemma}\label{lemma-3.2.1}
Suppose that $u_\varep\in H^1(D_{4r})$, 
$u_0 \in C^{2,\rho}(D_{4r})$ and $\mathcal{L}_\varep (u_\varep)=\mathcal{L}_0 (u_0)$
in $D_{4r}$, where $0<\rho<\eta$.
Also assume that $u_\varep=u_0$ on $\Delta_{4r}$.
Then, if $0<\varep<r$,
\begin{equation}\label{estimate-3.2.1}
\aligned
 \|\frac{\partial u^\alpha_\varep}{\partial x_i}
& -\frac{\partial}{\partial x_i} \big\{ \Phi_{\varep, j}^{\alpha\beta}\big\}
\cdot \frac{\partial u_0^\beta}{\partial x_j}\|_{L^\infty (D_r)}\\
\le & \frac{C}{r^{d+1}}\int_{D_{4r}} |u_\varep -u_0|\, dx
 +C\varep r^{-1} \| \nabla u_0\|_{L^\infty(D_{4r})}\\
& +C\varep \ln \big[ \varep^{-1} r +2\big] \| \nabla^2 u_0\|_{L^\infty (D_{4r})}
+C \varep r^\rho \| \nabla^2 u_0\|_{C^{0,\rho} (D_{4r})},
\endaligned
\end{equation}
where $\Phi_\varep= \big( \Phi_{\varep, j}^{\alpha\beta}\big)$ denotes the matrix of Dirichlet correctors
for $\mathcal{L}_\varep$ in $\Omega$.
\end{lemma}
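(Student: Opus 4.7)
The plan is to reduce the claimed Lipschitz bound to an $L^\infty$-bound on $\nabla w_\varep$, where
\[
w_\varep = u_\varep - u_0 - \{\Phi_{\varep,j}^\beta - P_j^\beta\}\,\frac{\partial u_0^\beta}{\partial x_j}
\]
is the full two-scale correction, and then to estimate $w_\varep$ by splitting it into a Green-representable piece and a boundary-harmonic piece on a smooth auxiliary subdomain. The pointwise identity
\[
\frac{\partial u_\varep^\alpha}{\partial x_i} - \frac{\partial\Phi_{\varep,j}^{\alpha\beta}}{\partial x_i}\frac{\partial u_0^\beta}{\partial x_j} = \frac{\partial w_\varep^\alpha}{\partial x_i} + (\Phi_{\varep,j}^{\alpha\beta} - P_j^{\alpha\beta})\frac{\partial^2 u_0^\beta}{\partial x_i \partial x_j},
\]
combined with $\|\Phi_\varep - P\|_\infty \le C\varep$ from Proposition \ref{prop-2.2}, reduces the proof to bounding $\|\nabla w_\varep\|_{L^\infty(D_r)}$ by the right-hand side of \eqref{estimate-3.2.1}. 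Crucially, $w_\varep \equiv 0$ on $\Delta_{4r}$, since $u_\varep = u_0$ there and $\Phi_{\varep,j}^\beta = P_j^\beta$ on all of $\partial\Omega$. By rescaling we may take $r = 1$.

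Next, I would select a $C^{2,\eta}$ auxiliary domain $\widetilde{D}$ with $D_3 \subset \widetilde{D} \subset D_4$ and $\Delta_3 \subset \partial\widetilde{D} \cap \partial\Omega$, and decompose $w_\varep = w_\varep^{(1)} + w_\varep^{(2)}$ in $\widetilde{D}$, where $w_\varep^{(1)} \in H_0^1(\widetilde{D})$ absorbs the source (so $\mathcal{L}_\varep w_\varep^{(1)} = \mathcal{L}_\varep w_\varep$ in $\widetilde{D}$) and $w_\varep^{(2)}$ is $\mathcal{L}_\varep$-harmonic in $\widetilde{D}$ with $w_\varep^{(2)} = w_\varep$ on $\partial\widetilde{D}$. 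Since $w_\varep \equiv 0$ on $\partial\widetilde{D} \cap \partial\Omega \supset \Delta_3$, the Avellaneda--Lin boundary Lipschitz estimate bounds $\|\nabla w_\varep^{(2)}\|_{L^\infty(D_1)}$ by $C\|w_\varep^{(2)}\|_{L^\infty(D_2)}$, and Lemma \ref{lemma-3.1.0} (with zero Dirichlet data on $\Delta_2$) then bounds this by $C\|w_\varep\|_{L^\infty(\partial\widetilde{D} \setminus \partial\Omega)} + C\int_{\widetilde{D}}|w_\varep^{(2)}|\,dx$. Using $|w_\varep| \le |u_\varep - u_0| + C\varep\|\nabla u_0\|_\infty$ together with Lemma \ref{lemma-3.1.1} to control the interior boundary trace, and undoing the rescaling, this recovers the first two terms $\tfrac{C}{r^{d+1}}\int_{D_{4r}}|u_\varep - u_0|\,dx + C\varep r^{-1}\|\nabla u_0\|_\infty$ of \eqref{estimate-3.2.1}.

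For $w_\varep^{(1)}$, apply Proposition \ref{prop-2.1} with $V_{\varep,j}^\beta = \Phi_{\varep,j}^\beta$ to write $\mathcal{L}_\varep w_\varep$ as a sum of three terms: (i) $\varep\,\partial_{y_i}\{F_{jik}(y/\varep)\partial_{jk}u_0\}$, (ii) $\partial_{y_i}\{a_{ij}(y/\varep)(\Phi_{\varep,k} - y_k)\partial_{jk}u_0\}$, and (iii) $a_{ij}(y/\varep)\,\partial_{y_j}\{\Phi_{\varep,k} - y_k - \varep\chi_k(y/\varep)\}\partial_{ik}u_0$. Then represent $w_\varep^{(1)}(x) = \int_{\widetilde{D}}\widetilde{G}_\varep(x,y)\mathcal{L}_\varep w_\varep(y)\,dy$, differentiate in $x$, and integrate by parts in $y$ on (i)--(ii) (valid because $\widetilde{G}_\varep(x,\cdot)$, hence $\nabla_x\widetilde{G}_\varep(x,\cdot)$, vanishes on $\partial\widetilde{D}$). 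The resulting integrals are estimated using $\|F\|_\infty + \|\chi\|_\infty \le C$ (Remark \ref{remark-2.1}), $\|\Phi_\varep - P\|_\infty \le C\varep$ and $|\nabla(\Phi_\varep - P - \varep\chi(\cdot/\varep))| \le C\min\{1,\varep/\delta\}$ (Proposition \ref{prop-2.2}), and the Green-function bounds \eqref{Green's-size-estimate-1}. The decomposition $\partial_{jk}u_0(y) = \partial_{jk}u_0(x) + [\partial_{jk}u_0(y) - \partial_{jk}u_0(x)]$ in (i)--(ii) converts the singularity $|\nabla_x\partial_y\widetilde{G}_\varep| \le C|x-y|^{-d}$ into an integrable $|x-y|^{-d+\rho}$ and produces the $C\varep r^\rho\|\nabla^2 u_0\|_{C^{0,\rho}}$ Hölder term, while (iii), bounded against $|\nabla_x\widetilde{G}_\varep| \le C|x-y|^{1-d}$ and split along $\{\delta(y) = \varep\}$, supplies the $C\varep\ln[\varep^{-1}r+2]\|\nabla^2 u_0\|_\infty$ logarithmic term.

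The principal technical obstacle is the frozen-coefficient piece $\partial_{jk}u_0(x)\int_{\widetilde{D}}\nabla_x\partial_{y_i}\widetilde{G}_\varep(x,y)\,Q_{jik}(y)\,dy$, with $Q_{jik}(y) := \varep F_{jik}(y/\varep) + a_{ij}(y/\varep)(\Phi_{\varep,k}(y) - y_k)$: the pointwise bound $|Q| \le C\varep$ against $|\nabla_x\partial_y\widetilde{G}_\varep| \le C|x-y|^{-d}$ is only logarithmically summable and cannot by itself produce the desired $\varep\log$ bound. The fix is a second integration by parts in $y$, transferring a derivative off $\widetilde{G}_\varep$ onto $Q_{jik}$: the antisymmetry identity $\sum_i \partial_i F_{jik} = -b_{jk}$ (a consequence of $F_{jik} = -F_{ijk}$ and $\sum_i \partial_i F_{ijk} = b_{jk}$) together with the cell-problem identity $\partial_i[a_{ij}\partial_j\chi_k + a_{ik}] = 0$ produces algebraic cancellations that reduce the integrand to $\nabla_x\widetilde{G}_\varep(x,y)$ against a coefficient of size $C\min\{1,\varep/\delta(y)\}$---essentially the remainder $h_k(y) = \Phi_{\varep,k}(y) - y_k - \varep\chi_k(y/\varep)$ controlled by Proposition \ref{prop-2.2}. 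This reduced integral is of the same type as (iii) and contributes the expected $C\varep\ln[\varep^{-1}r+2]\|\nabla^2 u_0\|_\infty$.
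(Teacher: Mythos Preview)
Your overall architecture is exactly the paper's: define $w_\varep$ using the Dirichlet correctors, split $w_\varep=w_\varep^{(1)}+w_\varep^{(2)}$ on an intermediate $C^{2,\eta}$ domain $\widetilde D$, treat $w_\varep^{(2)}$ by the Avellaneda--Lin boundary Lipschitz estimate, and treat $w_\varep^{(1)}$ by the Green representation together with Proposition~\ref{prop-2.1}. The handling of $w_\varep^{(2)}$ and of term~(iii) is fine.

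The genuine gap is in your ``frozen-coefficient'' step. After the second integration by parts you claim that $\sum_i\partial_{y_i}Q_{ijk}(y)$ is of size $C\min\{1,\varep/\delta(y)\}$, but it is not. Writing $\Phi_{\varep,k}-y_k=\varep\chi_k(y/\varep)+h_k(y)$ one computes (suppressing superscripts)
\[
\sum_i\partial_{y_i}Q_{ijk}
= -b_{jk}(y/\varep)
+ \partial_{y_i}\!\big[a_{ij}(y/\varep)\,\varep\chi_k(y/\varep)\big]
+ \varep^{-1}(\partial_i a_{ij})(y/\varep)\,h_k
+ a_{ij}(y/\varep)\,\partial_{y_i}h_k .
\]
Only the last term has size $C\min\{1,\varep/\delta\}$. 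The first two are merely $O(1)$ periodic functions, and they do \emph{not} cancel: the identity you invoke, $\partial_i[a_{ij}\partial_j\chi_k+a_{ik}]=0$, involves $\partial_j\chi_k$, not $\chi_k$, and has the wrong index pattern to match $\partial_i(a_{ij}\chi_k)$ or $b_{jk}=\hat a_{jk}-a_{jk}-a_{j\ell}\partial_\ell\chi_k$. Integrating an $O(1)$ coefficient against $|\nabla_x\widetilde G_\varep(x,y)|\le C|x-y|^{1-d}$ over $\widetilde D$ produces a bound of order $r\|\nabla^2 u_0\|_\infty$, not $\varep\ln[\varep^{-1}r+2]\|\nabla^2 u_0\|_\infty$.

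The paper avoids this obstacle by a much simpler device: instead of freezing only $\nabla^2 u_0$, it freezes the \emph{entire} integrand
\[
f_i(y)=\varep F_{jik}(y/\varep)\,\partial^2_{jk}u_0(y)
+a_{ij}(y/\varep)\big(\Phi_{\varep,k}(y)-y_k\big)\partial^2_{jk}u_0(y)
\]
at the point $x$, writing
\[
\nabla_x w_\varep^{(1)}(x)
=-\int_{\widetilde D}\nabla_x\partial_{y_i}\widetilde G_\varep(x,y)\,\{f_i(y)-f_i(x)\}\,dy
+\int_{\widetilde D}\nabla_x\widetilde G_\varep(x,y)\cdot(\text{iii})\,dy.
\]
Subtracting the constant $f_i(x)$ is free because $\widetilde G_\varep(x,\cdot)=0$ on $\partial\widetilde D$, so $\int_{\widetilde D}\partial_{y_i}\widetilde G_\varep(x,y)\,dy=0$. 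The point is that the factors $\varep F_{jik}(\cdot/\varep)$ and $a_{ij}(\cdot/\varep)(\Phi_{\varep,k}-P_k)$ each have $L^\infty$ norm $\le C\varep$ \emph{and} $C^{0,\rho}$ seminorm $\le C\varep^{1-\rho}$; hence
\[
\|f\|_\infty\le C\varep\|\nabla^2 u_0\|_\infty,
\qquad
|f(y)-f(x)|\le C|x-y|^\rho\big\{\varep^{1-\rho}\|\nabla^2 u_0\|_\infty+\varep\|\nabla^2 u_0\|_{C^{0,\rho}}\big\}.
\]
Using the $L^\infty$ bound for $|x-y|\ge\varep$ and the H\"older bound for $|x-y|<\varep$ against $|\nabla_x\nabla_y\widetilde G_\varep|\le C|x-y|^{-d}$ gives the $\varep\ln[\varep^{-1}r+2]\|\nabla^2 u_0\|_\infty$ and $\varep r^\rho\|\nabla^2 u_0\|_{C^{0,\rho}}$ terms directly, with no second integration by parts needed. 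Replacing your partial freezing by this full freezing of $f_i$ closes the gap.
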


\begin{proof}
We begin by choosing a $C^{2,\eta}$ domain
$\widetilde{D}$ such that $D_{3r}\subset \widetilde{D}\subset D_{4r}$.
Let
$$
w_\varep =u_\varep -u_0 -\big\{ \Phi_{\varep, j}^\beta -P_j^\beta\big\}\cdot \frac{\partial u_0^\beta}
{\partial x_j}.
$$
Note that $w_\varep=0$ on $\Delta_{4r}$.
Write $w_\varep =w_\varep^{(1)} +w_\varep^{(2)}$ in $\widetilde{D}$, where $w_\varep^{(1)}
\in H_0^1(\widetilde{D})$ and $\mathcal{L}_\varep (w_\varep^{(1)}) =\mathcal{L}_\varep (w_\varep)$
in $\widetilde{D}$.
Since $\mathcal{L}_\varep (w_\varep^{(2)})=0$ in $D_{3r}$
and $w_\varep^{(2)}=w_\varep =0$ on $\Delta_{3r}$, it follows from the boundary
Lipschitz estimate in \cite[Lemma 20]{AL-1987} that
$$
\aligned
\|\nabla w_\varep^{(2)}\|_{L^\infty(D_r)}
&\le \frac{C}{r^{d+1}}
\int_{D_{2r}} |w_\varep^{(2)}|\, dx\\
&\le \frac{C}{r^{d+1}}
\int_{D_{2r}} |w_\varep|\, dx
+Cr^{-1} \|w_\varep^{(1)}\|_{L^\infty(D_{2r})}\\
&\le \frac{C}{r^{d+1}}
\int_{D_{2r}} |u_\varep -u_0|\, dx
+C\varep r^{-1} \|\nabla u_0\|_{L^\infty(D_{2r})}
+Cr^{-1} \|w_\varep^{(1)}\|_{L^\infty(D_{2r})},
\endaligned
$$
where we have used the estimate $\|\Phi_{\varep, j}^\beta -P_j^\beta\|_\infty\le C\varep$
in Proposition \ref{prop-2.2}.
This implies that
$$
\|\nabla w_\varep\|_{L^\infty(D_r)}
\le 
\frac{C}{r^{d+1}}
\int_{D_{2r}} |u_\varep -u_0|\, dx
+C\varep r^{-1} \|\nabla u_0\|_{L^\infty(D_{2r})}
+C \|\nabla w_\varep^{(1)}\|_{L^\infty(D_{2r})},
$$
where we have used $\|w_\varep^{(1)}\|_{L^\infty(D_{2r})}
\le Cr \| \nabla w_\varep^{(1)}\|_{L^\infty(D_{2r})}$.
Thus,
\begin{equation}\label{3.2.10}
\aligned
\|\frac{\partial u^\alpha_\varep}{\partial x_i}
& -\frac{\partial}{\partial x_i} \big\{ \Phi_{\varep, j}^{\alpha\beta}\big\}
\cdot \frac{\partial u_0^\beta}{\partial x_j}\|_{L^\infty (D_r)}\\
& \le 
\frac{C}{r^{d+1}}
\int_{D_{2r}} |u_\varep -u_0|\, dx
+C\varep r^{-1} \|\nabla u_0\|_{L^\infty(D_{2r})}\\
&\quad\quad\quad +C\varep \| \nabla^2 u_0\|_{L^\infty(D_{2r})}
+C \|\nabla w_\varep^{(1)}\|_{L^\infty(D_{2r})}.
\endaligned
\end{equation}

To estimate $\nabla w_\varep^{(1)}$ on $D_{2r}$, we use the Green function representation
$$
w_\varep^{(1)} (x)
=\int_{\widetilde{D}} \widetilde{G}_\varep (x,y) \mathcal{L}_\varep (w_\varep) (y)\, dy,
$$
where
$\widetilde{G}_\varep (x,y)$ is the matrix of Green functions for $\mathcal{L}_\varep$ in 
the $C^{2,\eta}$ domain $\widetilde{D}$.
Let 
$$f_i (x) =\varep F_{kij} \left({x}/{\varep}\right) \frac{\partial^2 u_0}
{\partial x_j\partial x_k}
+a_{ij}\left({x}/{\varep}\right) \big[
\Phi_{\varep, k}-P_k\big] \cdot \frac{\partial^2 u_0}{\partial x_j\partial x_k},
$$
where we have suppressed the superscripts for notational simplicity. In view of (\ref{formula-2.1}),
we obtain
$$
\aligned
w_\varep^{(1)} (x)
=& -\int_{\widetilde{D}} \frac{\partial}{\partial y_i}
\big\{ \widetilde{G}_\varep (x,y)\big\} \cdot \big\{ f_i (y)-f_i (x)\big\}\, dy\\
&+\int_{\widetilde{D}}
\widetilde{G}_\varep (x,y) a_{ij}\left({y}/{\varep}\right)
\frac{\partial}{\partial y_j} \big[ \Phi_{\varep, k} -P_k -\varep \chi_k \left({y}/{\varep}\right)
\big] \cdot \frac{\partial^2 u_0}{\partial y_i \partial y_k}\, dy.
\endaligned
$$
It follows that
\begin{equation}\label{3.2.20}
\aligned
|\nabla w_\varep^{(1)}(x)|
&\le \int_{\widetilde{D}} |\nabla_x\nabla_y \widetilde{G}_\varep (x,y)|\, |f(y)-f(x)|\, dy\\
&
+C\| \nabla^2 u_0\|_{L^\infty (D_{4r})}
\int_{\widetilde{D}} |\nabla_x \widetilde{G}_\varep (x,y)|\, \big|\nabla_y \big[
\Phi_{\varep, j} -P_j -\varep \chi_j \left({y}/{\varep}\right)\big]\big|\, dy.
\endaligned
\end{equation}
To handle the first term in the right hand side of (\ref{3.2.20}), we use
$|\nabla_x\nabla_y \widetilde{G}_\varep (x,y)|\le C|x-y|^{-d}$ and the observation that
$$
\aligned
\| f\|_{L^\infty(D_{4r})} & \le C \, \varep \|\nabla^2 u_0\|_{L^\infty (D_{4r})},\\
 | f(x)-f(y)| & \le C |x-y|^\rho
\left\{ \varep^{1-\rho} \|\nabla^2 u_0\|_{L^\infty (D_{4r})}
+\varep \|\nabla^2 u_0\|_{C^{0,\rho} (D_{4r})}\right\}.
\endaligned
$$
This yields that
$$
\aligned
& \int_{\widetilde{D}} |\nabla_x\nabla_y \widetilde{G}_\varep (x,y)| |f(y)-f(x)|\, dy\\
& \qquad \le C\, \varep \|\nabla^2 u_0\|_{L^\infty (D_{4r})}
\int_{\widetilde{D}\setminus B(x,\varep)} \frac{dy}{|x-y|^d}\\
& \qquad \qquad + C \left\{ \varep^{1-\rho} \|\nabla^2 u_0\|_{L^\infty (D_{4r})}
+\varep \|\nabla^2 u_0\|_{C^{0,\rho} (D_{4r})} \right\}
\int_{\widetilde{D}\cap B(x,\varep)} 
\frac{dy}{|x-y|^{d-\rho}}\\
&\qquad \le C\, \varep \ln [\varep^{-1} r +2]\|\nabla^2 u_0\|_{L^\infty (D_{4r})}
+C\varep^{1+\rho} \|\nabla^2 u_0\|_{C^{0,\rho} (D_{4r})}.
\endaligned
$$

Finally, using the estimates
$|\nabla_x \widetilde{G}_\varep (x,y)|\le C \text{dist} (y, \partial\Omega) |x-y|^{-d}$
and $|\nabla_x \widetilde{G}_\varep (x,y)|
\le C |x-y|^{1-d}$ as well as estimates in Proposition \ref{prop-2.2}, we see that
 the second term in the right-hand side of (\ref{3.2.20}) is bounded by
$$
\aligned
& C \|\nabla^2 u_0\|_{L^\infty (D_{4r})} 
\left\{ \varep \int_{\widetilde{D}\setminus B(x,\varep)} 
\frac{dy}{|x-y|^d}
+\int_{\widetilde{D}\cap B(x,\varep)} \frac{dy}{|x-y|^{d-1}}\right\}\\
& \le C\, \varep \ln [\varep^{-1} r +2] \|\nabla^2 u_0\|_{L^\infty (D_{4r})}.
\endaligned
$$
Thus we have proved that
$$
\|\nabla w_\varep^{(1)} \|_{L^\infty(D_{3r})}
\le C\, \varep \ln [\varep^{-1} r +2]\|\nabla^2 u_0\|_{L^\infty (D_{4r})}
+C\varep^{1+\rho} \|\nabla^2 u_0\|_{C^{0,\rho} (D_{4r})}.
$$
This, together with (\ref{3.2.10}), completes the proof of (\ref{estimate-3.2.1}).
\end{proof}

We are now ready to give the proof of the estimate (\ref{Green's-derivative}).

\begin{thm}\label{theorem-3.2.1}
Suppose that $A(y)$ satisfies conditions (\ref{ellipticity}), (\ref{periodicity}) and
(\ref{smoothness}).
Let $\Omega$ be a bounded $C^{2,\eta}$ domain for some $\eta\in (0,1)$.
Then
\begin{equation}\label{estimate-3.2.2}
\big| \frac{\partial}{\partial x_i} \big\{ G_\varep^{\alpha\gamma}(x,y)\big\}
-\frac{\partial}{\partial x_i} \big\{ \Phi_{\varep, j}^{\alpha\beta} (x) \big\} \cdot
\frac{\partial }{\partial x_j}\big\{ G_0^{\beta\gamma} (x,y)\big\} \big|
\le \frac{C \, \varep \ln [ \varep^{-1}|x-y| +2]}{|x-y|^d},
\end{equation}
for any $x,y\in \Omega$, where $C$ depends only on $d$, $m$, $\mu$, 
$\lambda$, $\tau$ and $\Omega$.
\end{thm}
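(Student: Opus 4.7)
The plan is to apply Lemma~\ref{lemma-3.2.1} with the vector-valued functions $u_\varep^\alpha(x)=G_\varep^{\alpha\gamma}(x,y_0)$ and $u_0^\alpha(x)=G_0^{\alpha\gamma}(x,y_0)$, for a fixed index $\gamma$ and source point $y_0\in\Omega$, thereby reducing the desired estimate on $\nabla_x G_\varep$ to (a) the size bound on $G_\varep-G_0$ already given by Theorem~\ref{theorem-3.1.1}, and (b) standard pointwise bounds on derivatives of the constant-coefficient Green function $G_0$. Fix distinct $x_0,y_0\in\Omega$ and set $r=|x_0-y_0|/8$.

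First consider the trivial regime $r\le\varep$. Here the uniform Lipschitz bound $|\nabla_x G_\varep(x_0,y_0)|\le C|x_0-y_0|^{1-d}$ from (\ref{1.17}), the analogous bound for $G_0$, and $|\nabla\Phi_\varep|\le C$ from Proposition~\ref{prop-2.2} together control the left-hand side of (\ref{estimate-3.2.2}) by $C|x_0-y_0|^{1-d}$, which is dominated by $C\varep|x_0-y_0|^{-d}$ since $\varep\ge r\sim|x_0-y_0|$.

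Now assume $r>\varep$. Because $|x-y_0|\ge 4r$ for every $x\in B(x_0,4r)$, both $u_\varep$ and $u_0$ satisfy $\mathcal{L}_\varep u_\varep=0=\mathcal{L}_0 u_0$ on $D_{4r}=D(x_0,4r)$, and both vanish on $\Delta_{4r}$ (either because that set is empty when $x_0$ is far from $\partial\Omega$, or because $G_\varep(\cdot,y_0)$ and $G_0(\cdot,y_0)$ vanish on $\partial\Omega$). Fix any $\rho\in(0,\eta)$ and apply Lemma~\ref{lemma-3.2.1}; this bounds the left-hand side of (\ref{estimate-3.2.2}) by the four terms on the right-hand side of (\ref{estimate-3.2.1}). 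Each term is then straightforward: Theorem~\ref{theorem-3.1.1} gives $|u_\varep-u_0|\le C\varep r^{1-d}$ on $D_{4r}$, so the bulk integral contributes at most $C\varep r^{-d}$; the constant-coefficient estimates $|\nabla_x G_0|\le Cr^{1-d}$, $|\nabla_x^2 G_0|\le Cr^{-d}$, and $[\nabla_x^2 G_0]_{C^{0,\rho}(D_{4r})}\le Cr^{-d-\rho}$ convert the remaining three terms into $C\varep r^{-d}$, $C\varep\ln[\varep^{-1}r+2]\,r^{-d}$, and $C\varep r^{-d}$ respectively. Summing and recalling $r\sim|x_0-y_0|$ yields (\ref{estimate-3.2.2}).

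The one substantive technical point is justifying the boundary H\"older estimate $[\nabla_x^2 G_0(\cdot,y_0)]_{C^{0,\rho}(D_{4r})}\le Cr^{-d-\rho}$ together with the $L^\infty$ bound $|\nabla_x^2 G_0|\le Cr^{-d}$ on $D_{4r}$. Both follow from boundary $C^{2,\rho}$ Schauder theory for the constant-coefficient operator $\mathcal{L}_0$ on the $C^{2,\eta}$ domain $\Omega$, applied after rescaling by $r$ to $v(z)=r^{d-2}G_0(x_0+rz,y_0)$, which is $\mathcal{L}_0$-harmonic on a normalized domain, vanishes on the rescaled piece of $\partial\Omega$, and is uniformly bounded by the standard bound $|G_0|\le C|x-y|^{2-d}$; when $\delta(x_0)\gtrsim r$ purely interior Schauder estimates suffice. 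This is the only step in the argument that uses the full $C^{2,\eta}$ regularity of $\partial\Omega$ beyond what is already required to invoke Lemma~\ref{lemma-3.2.1}.
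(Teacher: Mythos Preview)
Your proposal is correct and follows essentially the same approach as the paper: fix $x_0,y_0$, set $r=|x_0-y_0|/8$, dispose of the case $\varep\ge r$ by the crude gradient bounds, and in the case $\varep<r$ apply Lemma~\ref{lemma-3.2.1} to $u_\varep=G_\varep(\cdot,y_0)$ and $u_0=G_0(\cdot,y_0)$, feeding in Theorem~\ref{theorem-3.1.1} and the $C^{2,\rho}$ Schauder bounds for $G_0$. Your added justification of the boundary $C^{2,\rho}$ estimate for $G_0$ via rescaling is a welcome elaboration of a step the paper simply asserts.
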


\begin{proof}
Fix $x_0$, $y_0\in \Omega$ and $r=|x_0-y_0|/8$.
We may assume that $\varep \le r$, since the case $\varep>r$ is trivial and
follows from the size
estimate of $|\nabla_x G_\varep (x,y)|$, $|\nabla_xG_0(x,y)|$ and (\ref{2.2.2}).

Let $u_\varep (x)=G_\varep (x,y_0)$ and $u_0 (x)=G_0(x,y_0)$.
Then $\mathcal{L}_\varep (u_\varep)=\mathcal{L}_0 (u_0)=0$ in $D_{4r}=D(x_0,4r)$
and $u_\varep=u_0=0$ on $\Delta_{4r} =\Delta(x_0, 4r)$.
Note that by Theorem \ref{theorem-3.1.1}, we have $\| u_\varep -u_0\|_{L^\infty (D_{4r})}
\le C\, \varep r^{1-d}$. 
Also, since $\Omega$ is $C^{2,\eta}$, we have
$\|\nabla u_0\|_{L^\infty (D_{4r})} \le C r^{1-d}$,
$\|\nabla^2 u_0\|_{L^\infty(D_{4r})} \le C r^{-d}$
and $\|\nabla^2 u_0\|_{C^{0, \rho}(D_{4r})} \le Cr^{-d-\rho}$.
It then follows from Lemma \ref{lemma-3.2.1} that
$$
\|\frac{\partial u^\alpha_\varep}{\partial x_i}
-\frac{\partial}{\partial x_i} \big\{ \Phi_{\varep, j}^{\alpha\beta}\big\}
\cdot \frac{\partial u_0^\beta}{\partial x_j} \|_{L^\infty(D_r)}
\le C \varep r^{-d} \ln [ \varep^{-1} r +2].
$$
This finishes the proof.
\end{proof}


As a corollary of the estimate (\ref{estimate-3.2.2}), we obtain an
$O(\varep)$ estimate (up to a logarithmic factor if $p\neq 2$) for 
$u_\varep-u_0 -\{ \Phi_{\varep, j}- P_j\} \frac{\partial u_0}{\partial x_j}$
in $W_0^{1,p}(\Omega)$.

\begin{thm} \label{theorem-3.2.2}
Assume that $A(y)$ and $\Omega$ satisfy the same assumptions as in Theorem \ref{theorem-3.2.1}.
Let $1<p<\infty$.
For $F\in L^p(\Omega)$ and $\varep\ge 0$,
let $u_\varep \in W^{1,p}_0(\Omega)$ and $\mathcal{L}_\varep (u_\varep)=F$ in $\Omega$.
Then
\begin{equation}\label{estimate-3.2.10}
\| u_\varep -u_0 - \big\{ \Phi_{\varep, j}^\beta -P_j^\beta\big\}\frac{\partial u_0^\beta}
{\partial x_j} \|_{W_0^{1,p}(\Omega)}\\
\le C_p\, \varep \big\{ \ln [\varep^{-1}M +2]\big\}^{4|\frac12-\frac{1}{p}|}
 \| F\|_{L^p(\Omega)},
\end{equation}
where $M=\text{\rm diam}(\Omega)$ and $C_p$ depends only
on $d$, $m$, $p$, $\mu$, $\lambda$, $\tau$ and $\Omega$.
\end{thm}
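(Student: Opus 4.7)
The plan is to reduce the $W^{1,p}$ estimate to a bound on the operator $T_\varep\colon F \mapsto \nabla u_\varep - \nabla \Phi_{\varep,j}\cdot \partial_j u_0$, obtain $L^1$ and $L^\infty$ bounds for $T_\varep$ directly from the pointwise kernel estimate of Theorem \ref{theorem-3.2.1}, obtain the sharp $L^2$ bound (with no log factor) via Proposition \ref{prop-2.1} combined with an energy estimate and Hardy's inequality, and then interpolate by Riesz--Thorin to recover the exponent $4|\tfrac12-\tfrac1p|$.

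To set up the reduction, note that $w_\varep \in W^{1,p}_0(\Omega)$, since $u_\varep = u_0 = 0$ and $\Phi_{\varep,j}^\beta = P_j^\beta$ on $\partial\Omega$, so $\|w_\varep\|_{W^{1,p}_0}\sim \|\nabla w_\varep\|_{L^p}$. A direct computation gives
$$
\frac{\partial w_\varep^\alpha}{\partial x_i}
= \frac{\partial u_\varep^\alpha}{\partial x_i} - \frac{\partial \Phi_{\varep,j}^{\alpha\gamma}}{\partial x_i}\cdot \frac{\partial u_0^\gamma}{\partial x_j} - \big(\Phi_{\varep,j}^{\alpha\gamma}-P_j^{\alpha\gamma}\big)\frac{\partial^2 u_0^\gamma}{\partial x_i\partial x_j}.
$$
The last term is bounded in $L^p$ by $C\varep\|\nabla^2 u_0\|_{L^p} \le C_p\varep \|F\|_{L^p}$, using $\|\Phi_\varep - P\|_\infty \le C\varep$ from Proposition \ref{prop-2.2} together with the standard $W^{2,p}$ regularity for $\mathcal{L}_0$ on a $C^{2,\eta}$ domain. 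It remains to bound $T_\varep F$ in $L^p$, whose kernel is $K_\varep(x,y) = \nabla_x G_\varep(x,y) - \nabla\Phi_\varep(x)\,\nabla_x G_0(x,y)$. Combining Theorem \ref{theorem-3.2.1} with the trivial pointwise bounds $|\nabla_x G_\varep|, |\nabla_x G_0|\le C|x-y|^{1-d}$ and $|\nabla\Phi_\varep| \le C$ yields
$$
|K_\varep(x,y)| \le C\min\!\left(|x-y|^{1-d},\; \frac{\varep\ln[\varep^{-1}|x-y|+2]}{|x-y|^d}\right).
$$
Splitting the integral at the scale $|x-y|=\varep$ gives $\int_\Omega |K_\varep(x,y)|\,dy \le C\varep\{\ln[\varep^{-1}M+2]\}^2$ uniformly in $x$, and by symmetry in the kernel the same bound holds for $\int_\Omega |K_\varep(x,y)|\,dx$; hence $\|T_\varep\|_{L^1\to L^1},\|T_\varep\|_{L^\infty\to L^\infty} \le C\varep\{\ln[\varep^{-1}M+2]\}^2$ by direct integration against the kernel.

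The main obstacle is the $L^2$ bound, which must come without any log factor. The route is to apply Proposition \ref{prop-2.1} with $V_{\varep,j}^\beta = \Phi_{\varep,j}^\beta$: then $w_\varep \in H_0^1(\Omega)$ solves
$$
\mathcal{L}_\varep(w_\varep) = \varep\,\partial_i[F_{jik}(x/\varep)\,\partial^2_{jk} u_0] + \partial_i[a_{ij}(x/\varep)(\Phi_{\varep,k}-P_k)\,\partial^2_{jk} u_0] + a_{ij}(x/\varep)\,\partial_j[\Phi_{\varep,k}-P_k-\varep\chi_k(x/\varep)]\,\partial^2_{ik} u_0.
$$
Testing against $w_\varep$ and using ellipticity, the two divergence-form terms are each bounded after integration by parts by $C\varep\|\nabla^2 u_0\|_{L^2}\|\nabla w_\varep\|_{L^2}$, using $\|F_{jik}\|_\infty \le C$ from Remark \ref{remark-2.1} and $\|\Phi_\varep - P\|_\infty \le C\varep$. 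For the remaining bulk term, Proposition \ref{prop-2.2} gives $|\nabla[\Phi_\varep - P - \varep\chi(\cdot/\varep)]| \le C\min(1,\varep/\delta) \le C\varep/\delta$; combined with Hardy's inequality $\|w_\varep/\delta\|_{L^2} \le C\|\nabla w_\varep\|_{L^2}$, valid for $w_\varep \in H_0^1$ on a smooth domain, this term is again bounded by $C\varep\|\nabla^2 u_0\|_{L^2}\|\nabla w_\varep\|_{L^2}$. Absorbing $\|\nabla w_\varep\|_{L^2}$ and using $\|\nabla^2 u_0\|_{L^2}\le C\|F\|_{L^2}$ yields $\|\nabla w_\varep\|_{L^2}\le C\varep\|F\|_{L^2}$, hence $\|T_\varep F\|_{L^2}\le C\varep\|F\|_{L^2}$. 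Riesz--Thorin interpolation between the $L^2$ bound (norm $C\varep$) and the $L^\infty$ bound (norm $C\varep\{\ln\}^2$) for $p>2$, and between the $L^1$ and $L^2$ bounds for $1<p<2$, yields $\|T_\varep\|_{L^p\to L^p}\le C_p\varep\{\ln[\varep^{-1}M+2]\}^{4|1/2-1/p|}$, which combined with the easy estimate for the $(\Phi_\varep - P)\nabla^2 u_0$ term completes the proof.
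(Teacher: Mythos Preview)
Your proposal is correct and follows essentially the same approach as the paper: the kernel bound from Theorem \ref{theorem-3.2.1} gives the $L^1$ and $L^\infty$ endpoint estimates with the $\{\ln\}^2$ factor, the sharp $L^2$ bound is obtained by testing the identity of Proposition \ref{prop-2.1} (with $V_{\varep,j}^\beta=\Phi_{\varep,j}^\beta$) against $w_\varep$ and invoking Hardy's inequality for the bulk term, and Riesz--Thorin interpolation produces the exponent $4|\tfrac12-\tfrac1p|$. The only cosmetic difference is that you phrase the argument in terms of the operator $T_\varep$ and then add back the easy $(\Phi_\varep-P)\nabla^2 u_0$ term, whereas the paper works directly with $\nabla w_\varep$ throughout; the substance is identical.
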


\begin{proof}
We will show that for any $1\le p\le \infty$,
\begin{equation}\label{estimate-3.2.2-0}
\|\frac{\partial u^\alpha_\varep}{\partial x_i}
-\frac{\partial }{\partial x_i} \big\{ \Phi_{\varep, j}^{\alpha\beta}\big\}
\cdot \frac{\partial u_0^\beta}{\partial x_j} \|_{L^p(\Omega)}
\le C \varep \left\{ \ln [ \varep^{-1} M +2]\right\}^{4|\frac12-\frac{1}{p}|}
 \| F\|_{L^p(\Omega)}.
\end{equation}
This, together with (\ref{2.2.2}) and the estimate 
$\|\nabla^2 u_0\|_{L^p(\Omega)}\le C \| F\|_{L^p(\Omega)}$ for $1<p<\infty$,
gives (\ref{estimate-3.2.10}). To see (\ref{estimate-3.2.2-0}), we use
Theorem \ref{theorem-3.2.1} as well as
estimates on $|\nabla_x G_\varep(x,y)|$ and $|\nabla \Phi_\varep|$ to deduce that
$$
\big|\frac{\partial u^\alpha_\varep}{\partial x_i}
-\frac{\partial }{\partial x_i} \big\{ \Phi_{\varep, j}^{\alpha\beta}\big\}
\cdot \frac{\partial u_0^\beta}{\partial x_j}\big|
\le C \int_\Omega K_\varep (x,y) |f(y)|\, dy,
$$
where
\begin{equation}\label{definition-of-K}
K_\varep (x,y)= \left\{
\aligned
& \varep |x-y|^{-d} \ln \big[ \varep^{-1} |x-y|+2], \quad \text{ if } |x-y|\ge \varep,\\
&  |x-y|^{1-d}, \qquad\qquad\qquad\quad  \qquad \text{ if } |x-y|<\varep.
\endaligned
\right.
\end{equation}
Note that
$$
\sup_{x\in \Omega} \int_\Omega K_\varep (x,y)\, dy 
+\sup_{y\in \Omega} \int_\Omega K_\varep (x,y)\, dx 
\le C\varep \big\{ \ln [\varep^{-1}M +2]\big\}^2.
$$
This gives (\ref{estimate-3.2.2-0}) in the case $p=1$ or $\infty$.
Thus, by the M. Riesz interpolation theorem,
 it suffices to prove the estimate for $p=2$.

Let $w_\varep = u_\varep -u_0 -\{ \Phi^\beta_{\varep,j} -P^\beta_j\} 
\frac{\partial u^\beta_0}{\partial x_j}$ and $\delta (x)=\text{dist}(x, \partial\Omega)$.
We may deduce from Propositions \ref{prop-2.1} and \ref{prop-2.2} that
\begin{equation}\label{estimate-3.2.2-1}
\aligned
\int_\Omega |\nabla w_\varep|^2\, dx
& \le C\varep \int_\Omega |\nabla^2 u_0| \, |\nabla w_\varep|\, dx
+C \varep \int_\Omega [\delta(x)]^{-1} |\nabla^2 u_0|\, |w_\varep|\, dx\\
& \le C \varep
\|\nabla^2 u_0\|_{L^2(\Omega)}
\|\nabla w_\varep\|_{L^2(\Omega)},
\endaligned
\end{equation}
where, for the last inequality,
  we have used H\"older's inequality as well as Hardy's inequality
$ \|[\delta(x)]^{-1} w _\varep\|_{L^2(\Omega)}
\le C \|\nabla w_\varep\|_{L^2(\Omega)}$.
The desired estimate follows easily from (\ref{estimate-3.2.2-1}).
\end{proof}

Let $G_\varep^* (x,y)=\big( G_\varep^{*\alpha\beta} (x,y)\big)$ 
denote the matrix of Green's functions for $\mathcal{L}_\varep^*$,
the adjoint of $\mathcal{L}_\varep$. By Theorem \ref{theorem-3.2.1},
\begin{equation}\label{adjoint-estimate-1}
\big| \frac{\partial}{\partial x_i} \big\{ G_\varep^{*\alpha\gamma}(x,y)\big\}
-\frac{\partial}{\partial x_i} \big\{ \Phi_{\varep, j}^{*\alpha\beta} (x) \big\} \cdot
\frac{\partial }{\partial x_j}\big\{ G_0^{*\beta\gamma} (x,y)\big\} \big|
\le \frac{C \, \varep \ln [ \varep^{-1}|x-y| +2]}{|x-y|^d},
\end{equation}
where $\Phi_\varep^*$ denotes the matrix of Dirichlet correctors for $\mathcal{L}_\varep^*$
in $\Omega$.
Since $G_\varep^{*\alpha\beta} (x,y) =G_\varep^{\beta\alpha} (y,x)$, we obtain
\begin{equation}\label{adjoint-estimate-2}
\big| \frac{\partial}{\partial y_i} \big\{ G_\varep^{\gamma\alpha}(x,y)\big\}
-\frac{\partial}{\partial y_i} \big\{ \Phi_{\varep, j}^{*\alpha\beta} (y) \big\} \cdot
\frac{\partial }{\partial y_j}\big\{ G_0^{\gamma\beta} (x,y)\big\} \big|
\le \frac{C \, \varep \ln [ \varep^{-1}|x-y| +2]}{|x-y|^d}.
\end{equation}
This leads to an asymptotic expansion of the Poisson kernel for $\mathcal{L}_\varep$
on $\Omega$.

Let $(h^{\alpha\beta} (y))$ denote the inverse matrix of 
$\big( n_i(y)n_j(y)\hat{a}_{ij}^{\alpha\beta}\big)_{m\times m}$.

\begin{thm}\label{Poisson-kernel-theorem}
Suppose that $A(y)$ satisfies conditions (\ref{ellipticity}), (\ref{periodicity})
and (\ref{smoothness}).
Let $P_\varep (x,y) =\big( P_\varep^{\alpha\beta}(x,y) \big)$ be the Poisson kernel
for $\mathcal{L}_\varep$ on a $C^{2,\eta}$ domain $\Omega$.
Then 
\begin{equation}\label{Poisson-kernel-estimate}
P_\varep^{\alpha\beta} (x,y)=P_0^{\alpha\gamma}(x,y) \omega_\varep^{\gamma\beta}(y)
+R_\varep^{\alpha\beta} (x,y),
\end{equation}
where
\begin{equation}\label{definition-of-omega}
\omega_\varep^{\gamma\beta} (y)
=h^{\gamma \sigma} (y) \cdot \frac{\partial}{\partial n(y)}
\big\{ \Phi_{\varep, k}^{*\rho\sigma} (y) \big\} \cdot n_k(y) \cdot
n_i(y)n_j(y) a_{ij}^{\rho\beta} (y/\varep),
\end{equation}
\begin{equation}\label{remainder-estimate}
|R_\varep^{\alpha\beta} (x,y)|
\le \frac{C \, \varep \ln [ \varep^{-1}|x-y| +2]}{|x-y|^d}
\qquad \text{ for any }
x\in \Omega \text{ and } y\in \partial\Omega, 
\end{equation}
and $C$ depends
only on $d$, $m$, $\mu$, $\lambda$, $\tau$ and $\Omega$.
\end{thm}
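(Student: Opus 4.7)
The plan is to combine three ingredients: the Green representation of the Poisson kernel as a conormal derivative of $G_\varep$ on $\partial\Omega$; the adjoint expansion (\ref{adjoint-estimate-2}) for $\nabla_y G_\varep$; and two boundary identities, namely $G_0(x,\cdot)|_{\partial\Omega}=0$ and $\Phi_{\varep,k}^{*\gamma\rho}(y)=y_k\delta^{\gamma\rho}$ for $y\in\partial\Omega$.

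First I would apply Green's second identity to $\mathcal{L}_\varep u_\varep=0$ with $u_\varep=f$ on $\partial\Omega$ and to $G_\varep(x,\cdot)$. Since the latter vanishes on $\partial\Omega$, only one boundary term survives, yielding
\begin{equation*}
P_\varep^{\alpha\beta}(x,y)=-n_i(y)\,a_{ji}^{\gamma\beta}(y/\varep)\,\frac{\partial G_\varep^{\alpha\gamma}}{\partial y_j}(x,y),\qquad y\in\partial\Omega,
\end{equation*}
with the analogous formula $P_0^{\alpha\beta}(x,y)=-n_i(y)\hat a_{ji}^{\gamma\beta}\,\partial_{y_j}G_0^{\alpha\gamma}(x,y)$ for the homogenized operator. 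Substituting (\ref{adjoint-estimate-2}) into the first expression immediately gives
\begin{equation*}
P_\varep^{\alpha\beta}(x,y)=L_\varep^{\alpha\beta}(x,y)+R_\varep^{\alpha\beta}(x,y),
\end{equation*}
where $L_\varep^{\alpha\beta}(x,y)=-n_i(y)\,a_{ji}^{\gamma\beta}(y/\varep)\,\partial_{y_j}\Phi_{\varep,k}^{*\gamma\rho}(y)\,\partial_{y_k}G_0^{\alpha\rho}(x,y)$ and the remainder already satisfies (\ref{remainder-estimate}) by (\ref{adjoint-estimate-2}) and the uniform bound $|A|\le\mu^{-1}$. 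So (\ref{remainder-estimate}) is immediate; the work is in identifying $L_\varep^{\alpha\beta}$ with $P_0^{\alpha\gamma}\omega_\varep^{\gamma\beta}$.

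To do so I would, for $y\in\partial\Omega$, replace $\partial_{y_k}G_0^{\alpha\rho}(x,y)$ by $n_k(y)\partial_nG_0^{\alpha\rho}(x,y)$ and use the tangential-normal decomposition
\begin{equation*}
\frac{\partial \Phi_{\varep,k}^{*\gamma\rho}}{\partial y_j}(y)=\delta_{jk}\delta^{\gamma\rho}+n_j(y)\Big[\frac{\partial \Phi_{\varep,k}^{*\gamma\rho}}{\partial n(y)}(y)-n_k(y)\delta^{\gamma\rho}\Big],
\end{equation*}
which follows from $\Phi_{\varep,k}^{*\gamma\rho}=y_k\delta^{\gamma\rho}$ on $\partial\Omega$ by subtracting the normal part. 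Plugging this into $L_\varep^{\alpha\beta}$, the contributions of the two $\delta^{\gamma\rho}$ pieces (one from $\delta_{jk}\delta^{\gamma\rho}$, one from $-n_jn_k\delta^{\gamma\rho}$) are equal in magnitude and opposite in sign after renaming the dummies $i\leftrightarrow j$ using symmetry of $n_in_j$, and therefore cancel. Only the normal-derivative piece survives:
\begin{equation*}
L_\varep^{\alpha\beta}(x,y)=-n_i(y)n_j(y)\,a_{ij}^{\gamma\beta}(y/\varep)\,\frac{\partial \Phi_{\varep,k}^{*\gamma\rho}}{\partial n(y)}(y)\,n_k(y)\,\frac{\partial G_0^{\alpha\rho}}{\partial n(y)}(x,y).
\end{equation*}
Writing the boundary form $P_0^{\alpha\gamma'}(x,y)=-M^{\sigma\gamma'}(y)\,\partial_nG_0^{\alpha\sigma}(x,y)$ with $M^{\sigma\gamma'}(y)=n_i(y)n_j(y)\hat a_{ij}^{\sigma\gamma'}$, and inserting $\delta^{\sigma\sigma'}=M^{\sigma\gamma'}h^{\gamma'\sigma'}$, the previous display rearranges exactly as $P_0^{\alpha\gamma}(x,y)\omega_\varep^{\gamma\beta}(y)$ with $\omega_\varep$ given by (\ref{definition-of-omega}).

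The main obstacle is the index bookkeeping around the cancellation in the tangential part of $\partial_{y_j}\Phi^*_{\varep,k}$; the symmetry of $n_in_j$ is what makes the two would-be tangential contributions cancel, and this works without any symmetry hypothesis on $A$ because it is the adjoint correctors $\Phi_\varep^*$ that enter naturally (the expansion (\ref{adjoint-estimate-2}) is obtained by applying Theorem \ref{theorem-3.2.1} to $\mathcal{L}_\varep^*$). Everything else is routine substitution.
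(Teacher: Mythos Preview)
Your proof is correct and follows essentially the same approach as the paper's. The only difference is the order of operations: the paper first uses $G_\varep(x,\cdot)|_{\partial\Omega}=0$ to rewrite $P_\varep^{\alpha\beta}=-\partial_n G_\varep^{\alpha\gamma}\cdot n_in_ja_{ij}^{\gamma\beta}(y/\varep)$ and only then applies (\ref{adjoint-estimate-2}) contracted with $n$, which lands directly on the normal-derivative expression and bypasses your tangential--normal decomposition of $\nabla\Phi_\varep^*$ and the accompanying cancellation.
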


\begin{proof}
Note that
\begin{equation}\label{3.2.30}
\aligned
P_\varep^{\alpha\beta} (x,y)
& =-n_i(y) a_{ji}^{\gamma\beta} (y/\varep) \frac{\partial}{\partial y_j}
\big\{ G_\varep^{\alpha\gamma} (x,y)\big\}\\
& =-\frac{\partial}{\partial n(y)} \big\{ G_\varep^{\alpha\gamma} (x,y)\big\}
\cdot n_i(y)n_j(y) a_{ij}^{\gamma\beta} (y/\varep),
\endaligned
\end{equation}
since $G_\varep (x, \cdot)=0$ on $\partial\Omega$. By (\ref{adjoint-estimate-2}),
we obtain
\begin{equation}\label{3.2.31}
\aligned
& \big| P_\varep^{\alpha\beta} (x,y)
+\frac{\partial}{\partial n(y)}
\big\{ G_0^{\alpha \sigma} (x,y)\big\}\cdot
\frac{\partial}{\partial n(y)} \big\{ \Phi_{\varep, k}^{*\gamma \sigma} (y)\big\}
\cdot 
n_i(y)n_j(y) a_{ij}^{\gamma\beta} (y/\varep) n_k (y)\big|\\
&\le \frac{C \, \varep \ln [ \varep^{-1}|x-y| +2]}{|x-y|^d}.
\endaligned
\end{equation}
In view of (\ref{3.2.30}) (with $\varep=0$), we have
$$
P_0^{\alpha\beta} (x,y) h^{\beta\sigma} (y) =-\frac{\partial}{\partial n(y)} 
\big\{ G_0^{\alpha \sigma} (x,y)\big\}.
$$
This, together with (\ref{3.2.31}), gives
$$
|P_\varep^{\alpha\beta} (x,y)- P_0^{\alpha\gamma} (x,y) \omega_\varep^{\gamma\beta} (y)|
\le 
\frac{C \, \varep \ln [ \varep^{-1}|x-y| +2]}{|x-y|^d},
$$
for any $x\in \Omega$ and $y\in \partial\Omega$,
where $\omega_\varep (y)$ is defined by (\ref{definition-of-omega}).
\end{proof}

With the asymptotic expansion for the Poisson kernels at our disposal, we may approximate
the solution of the $L^p$ Dirichlet problem: $\mathcal{L}_\varep (u_\varep)=0$
in $\Omega$ and $u_\varep =f_\varep$ on $\partial\Omega$ by the solution of the homogenized
system with boundary data $\omega_\varep f_\varep$.
As mentioned in the Introduction, the case $f_\varep(x)=f(x, x/\varep)$ 
with $f(x,y)$ periodic in $y$ is of particular interest.

\begin{thm}\label{approximation-thm}
Assume that $A(y)$ and $\Omega$ satisfy the same assumptions as in Theorem \ref{Poisson-kernel-theorem}.
Let $\mathcal{L}_\varep (u_\varep)=0$ in $\Omega$ and $u_\varep =f_\varep$ on $\partial\Omega$.
Then for any $1\le p<\infty$,
\begin{equation}\label{approximation-estimate}
\| u_\varep -v_\varep\|_{L^p(\Omega)}
\le C \left\{ \varep \big( \ln [\varep^{-1}M +2]\big)^2 
\right\}^{1/p} \| f_\varep\|_{L^p(\partial\Omega)},
\end{equation}
where $\mathcal{L}_0 (v_\varep) =0$ in $\Omega$ and $v_\varep =\omega_\varep f_\varep$
on $\partial\Omega$, with $\omega_\varep$ defined by (\ref{definition-of-omega}).
\end{thm}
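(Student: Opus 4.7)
The plan is to represent $u_\varep - v_\varep$ as an integral operator applied to $f_\varep$ with kernel $R_\varep(x,y)$, and then to control this operator on $L^p$ via a Schur-type interpolation estimate between the $L^1$ and $L^\infty$ bounds.

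First, writing $u_\varep$ and $v_\varep$ by the Poisson integral formula and applying (\ref{Poisson-kernel-estimate}), we get
$$
u_\varep (x) - v_\varep (x)
= \int_{\partial\Omega}
\big\{ P_\varep^{\alpha\beta}(x,y) -P_0^{\alpha\gamma}(x,y)\omega_\varep^{\gamma\beta}(y)\big\} f_\varep^\beta(y)\, d\sigma(y)
= \int_{\partial\Omega} R_\varep(x,y) f_\varep(y)\, d\sigma(y).
$$
Recall the standard Schur estimate: if $A=\sup_{x\in \Omega} \int_{\partial\Omega} |R_\varep(x,y)|\, d\sigma(y)$ and $B=\sup_{y\in \partial\Omega} \int_{\Omega} |R_\varep(x,y)|\, dx$, then Hölder's inequality applied to $R_\varep(x,y)=|R_\varep|^{1-1/p}\cdot |R_\varep|^{1/p}$ followed by Fubini yields $\|u_\varep -v_\varep\|_{L^p(\Omega)}\le A^{(p-1)/p} B^{1/p}\|f_\varep\|_{L^p(\partial\Omega)}$. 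Thus it suffices to prove $A\le C$ and $B\le C\varep (\ln[\varep^{-1}M+2])^2$.

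For $A$, the remainder estimate (\ref{remainder-estimate}) alone is too weak (it blows up as $\delta(x)=\text{dist}(x,\partial\Omega)\to 0$); instead I will use the trivial pointwise bound
$$|R_\varep(x,y)|\le |P_\varep(x,y)|+|P_0(x,y)|\,|\omega_\varep(y)|\le \frac{C\,\delta(x)}{|x-y|^d},$$
which follows from the Poisson kernel bound $|P_\varep(x,y)|\le C\delta(x)|x-y|^{-d}$ (obtained from (\ref{Green's-size-estimate-1}) applied to $\nabla_y G_\varep$, valid for all $\varep\ge 0$) and the boundedness of $\omega_\varep$ coming from (\ref{definition-of-omega}) and Proposition \ref{prop-2.2}. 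A standard dyadic decomposition of $\partial\Omega$ about the nearest boundary point to $x$ gives $\int_{\partial\Omega}|x-y|^{-d}\,d\sigma(y)\le C/\delta(x)$, so $A\le C$.

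For $B$, fix $y\in\partial\Omega$ and split $\Omega$ into $\{|x-y|<\varep\}$ and $\{|x-y|\ge \varep\}$. On the former region I use the trivial bound above combined with $\delta(x)\le |x-y|$, yielding $|R_\varep(x,y)|\le C|x-y|^{1-d}$, and $\int_{|x-y|<\varep} |x-y|^{1-d}\,dx\le C\varep$. On the latter region I use (\ref{remainder-estimate}); integrating in polar coordinates and substituting $s=r/\varep$ gives
$$\int_{\{\varep \le |x-y|\le M\}\cap\Omega}\frac{\varep\ln[\varep^{-1}|x-y|+2]}{|x-y|^d}\,dx \le C\varep \int_1^{M/\varep}\frac{\ln(s+2)}{s}\,ds\le C\varep \big(\ln[\varep^{-1}M+2]\big)^2.$$
Combining the two regions yields $B\le C\varep(\ln[\varep^{-1}M+2])^2$.

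Plugging these bounds into the Schur estimate gives the claimed inequality. The main issue — and the reason for distinguishing the two regimes in $B$ — is that (\ref{remainder-estimate}) cannot be integrated all the way down to $|x-y|=0$, so one must fall back on the individual pointwise size estimates of the two Poisson kernels near the boundary; once this dichotomy is in place, the rest is a routine Schur interpolation.
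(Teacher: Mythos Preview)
Your proof is correct and follows essentially the same approach as the paper: both represent $u_\varep-v_\varep$ via the kernel $R_\varep$, bound $\sup_x\int_{\partial\Omega}|R_\varep|\,d\sigma\le C$ using the crude Poisson-type estimate $|R_\varep(x,y)|\le C\delta(x)|x-y|^{-d}$, bound $\sup_y\int_\Omega|R_\varep|\,dx\le C\varep(\ln[\varep^{-1}M+2])^2$ by splitting at $|x-y|=\varep$, and combine the two via H\"older/Schur. The only cosmetic difference is that the paper writes out the H\"older step explicitly as $|u_\varep-v_\varep|^p\le C\int_{\partial\Omega}|R_\varep||f_\varep|^p\,d\sigma$ and then integrates, whereas you package this as the Schur bound $A^{1-1/p}B^{1/p}$.
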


\begin{proof} Since 
$$
 u_\varep (x) =\int_{\partial\Omega} P_\varep (x,y) f_\varep (y)\, dy\quad
\text{ and } \quad
 v_\varep (x) =\int_{\partial\Omega} P_0 (x,y) \omega_\varep (y) f_\varep (y)\, dy,
$$
it follows from Theorem \ref{Poisson-kernel-theorem} that
$$
|u_\varep(x)-v_\varep(x)|
\le \int_{\partial\Omega} |R_\varep (x,y)|| f_\varep (y)|\, dy.
$$
Using 
$$
|R_\varep (x,y)|\le C\big\{ |\nabla_y G_\varep (x,y)|+|\nabla_y G_0(x,y)|\big\}
\le C\delta (x) |x-y|^{-d},
$$
where $\delta(x)=\text{dist}(x,\partial\Omega)$, we see that
$\int_{\partial\Omega} |R_\varep (x,y)|\, dy\le C$ for any $x\in\Omega$. It then
follows by H\"older's inequality that
\begin{equation}\label{3.2.40}
|u_\varep (x)-v_\varep (x)|^p \le 
C\int_{\partial\Omega} |R_\varep (x,y)|| f_\varep (y)|^p\, dy.
\end{equation}
Now, by Theorem \ref{Poisson-kernel-theorem} as well as the estimate $|R_\varep (x,y)|
\le C|x-y|^{1-d}$, we obtain
$$
\aligned
\int_\Omega |R_\varep (x,y)|\, dx
&\le  C \int_{\Omega\cap B(y,\varep)} \frac{dx}{|x-y|^{d-1}}
+ C \varep \int_{\Omega\setminus B(y,\varep)}
\frac{\ln [\varep^{-1} |x-y|+2]}{|x-y|^d}\, dx\\
& \le C\varep \big\{ \ln [\varep^{-1} M +2]\big\}^2,
\endaligned
$$
for any $y\in \partial\Omega$.
This, together with (\ref{3.2.40}) and Fubini's Theorem, gives (\ref{approximation-estimate}).
\end{proof}

We end this section with another approximation result.
Note that by (\ref{estimate-3.2.2-0}),
\begin{equation}\label{another-approx}
\|\nabla (\mathcal{L}_\varep)^{-1} (F)
-\nabla \Phi_\varep \cdot \nabla (\mathcal{L}_0)^{-1}(F)\|_{L^p(\Omega)}
\le C \varep 
\big\{ \ln [\varep^{-1} M+2]\big\}^{4|\frac12
-\frac{1}{p}|} \| F\|_{L^p(\Omega)},
\end{equation}
for $1\le p\le \infty$.
By duality this gives the following.

\begin{thm}\label{approximation-theorem-2}
Assume that $A(y)$ and $\Omega$ satisfy the same assumptions 
as in Theorem \ref{Poisson-kernel-theorem}.
For $f=(f_i^\alpha)\in L^2(\Omega)$ and $\varep\ge 0$, let
$u_\varep\in H_0^1(\Omega)$ and $\mathcal{L}_\varep (u_\varep) =\text{\rm div} (f)$
in $\Omega$. Then if $f\in L^p(\Omega)$ for some $1\le p\le \infty$,
\begin{equation}\label{estimate-3.2.41}
\| u_\varep -v_\varep\|_{L^p(\Omega)}
\le C\, \varep \big\{ \ln [\varep^{-1} M+2]\big\}^{4|\frac12
-\frac{1}{p}|} \| f\|_{L^p(\Omega)},
\end{equation}
where
$v_\varep\in H_0^1(\Omega)$ and $\mathcal{L}_0 (v_\varep) =\text{\rm div} (F_\varep)$
with 
$$
F_{\varep, i}^\alpha  (x)=f_j^\beta (x)
\frac{\partial}{\partial x_j} \left\{ \Phi_{\varep, i}^{*\beta\alpha}\right\}.
$$
\end{thm}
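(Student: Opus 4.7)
The plan is to derive (\ref{estimate-3.2.41}) from (\ref{another-approx}) by a straightforward duality argument. First, I would observe that (\ref{another-approx}) applies verbatim to the adjoint operator $\mathcal{L}_\varep^*$: since $\mathcal{L}_\varep^*$ has coefficients $(a_{ji}^{\beta\alpha})$ satisfying the same conditions (\ref{ellipticity})--(\ref{smoothness}), all of the preceding analysis (in particular Theorem \ref{theorem-3.2.1} and its corollary (\ref{another-approx})) goes through with the Dirichlet correctors $\Phi_\varep^*$ for $\mathcal{L}_\varep^*$ in place of $\Phi_\varep$. Hence for any $q \in [1,\infty]$ and $g \in L^q(\Omega)$,
\begin{equation*}
\|\nabla (\mathcal{L}_\varep^*)^{-1}(g) - \nabla \Phi_\varep^* \cdot \nabla (\mathcal{L}_0^*)^{-1}(g)\|_{L^q(\Omega)} \le C \varep \big\{\ln[\varep^{-1} M + 2]\big\}^{4|\frac{1}{2}-\frac{1}{q}|} \|g\|_{L^q(\Omega)}.
\end{equation*}

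Next, given $g \in L^{p'}(\Omega)$ with $p'=p/(p-1)$, let $\varphi_\varep, \varphi_0 \in H_0^1(\Omega)$ solve $\mathcal{L}_\varep^*\varphi_\varep = g$ and $\mathcal{L}_0^*\varphi_0 = g$ respectively. Pairing the equations $\mathcal{L}_\varep u_\varep = \text{\rm div}(f)$ and $\mathcal{L}_0 v_\varep = \text{\rm div}(F_\varep)$ against $\varphi_\varep$ and $\varphi_0$ and integrating by parts gives
\begin{equation*}
\int_\Omega u_\varep g \, dx = -\int_\Omega f \cdot \nabla \varphi_\varep \, dx, \qquad \int_\Omega v_\varep g \, dx = -\int_\Omega F_\varep \cdot \nabla \varphi_0 \, dx.
\end{equation*}
Substituting $F_{\varep,i}^\alpha = f_j^\beta \partial_j \Phi_{\varep, i}^{*\beta\alpha}$ and subtracting yields
\begin{equation*}
\int_\Omega (u_\varep - v_\varep) g \, dx = -\int_\Omega f_j^\beta \Big[\partial_j \varphi_\varep^\beta - \partial_j \Phi_{\varep, i}^{*\beta\alpha} \cdot \partial_i \varphi_0^\alpha \Big] dx.
\end{equation*}
The bracketed quantity is (up to a relabeling of dummy indices) exactly the component of $\nabla \varphi_\varep - \nabla \Phi_\varep^* \cdot \nabla \varphi_0$ controlled by the adjoint form of (\ref{another-approx}).

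Finally, by H\"older's inequality and the estimate above at exponent $q=p'$,
\begin{equation*}
\Big|\int_\Omega (u_\varep - v_\varep) g \, dx\Big| \le \|f\|_{L^p(\Omega)} \cdot C \varep \big\{\ln[\varep^{-1} M + 2]\big\}^{4|\frac{1}{2}-\frac{1}{p'}|} \|g\|_{L^{p'}(\Omega)},
\end{equation*}
and since $|\frac{1}{2}-\frac{1}{p'}| = |\frac{1}{2}-\frac{1}{p}|$ the logarithmic exponent is unchanged. Taking the supremum over $g$ with $\|g\|_{L^{p'}(\Omega)} \le 1$ delivers (\ref{estimate-3.2.41}); the endpoints $p=1$ and $p=\infty$ are handled by the usual $(L^1)^*=L^\infty$ duality. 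The only real thing to verify is the index bookkeeping that identifies the bracket above with the integrand of the adjoint version of (\ref{another-approx}); once that identification is made, the proof is a direct application of H\"older's inequality and no new analytic ingredients are required.
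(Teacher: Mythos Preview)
Your argument is correct and is precisely the duality that the paper has in mind: the paper's entire proof is the single sentence ``By duality this gives the following'' preceding the theorem statement, and you have accurately filled in the details, including the index identification showing that the bracket $\partial_j\varphi_\varep^\beta-\partial_j\Phi_{\varep,i}^{*\beta\alpha}\partial_i\varphi_0^\alpha$ is exactly the component controlled by (\ref{estimate-3.2.2-0}) applied to $\mathcal{L}_\varep^*$.
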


\subsection{An asymptotic expansion for $\nabla_x\nabla_y G_\varep (x,y)$
and its applications}

In this subsection we derive an asymptotic expansion for $\nabla_x\nabla_y G_\varep (x,y)$.
As its applications we obtain asymptotic expansions for
$({\partial}/{\partial x_i}) \big(\mathcal{L}_\varep\big)^{-1}({\partial}/
{\partial x_j})$ and the Dirichlet-to-Neumann map associated with $\mathcal{L}_\varep$.

\begin{thm}\label{theorem-3.3.1}
Suppose that $A(y)$ satisfies conditions (\ref{ellipticity}), (\ref{periodicity})
and (\ref{smoothness}). Let $\Omega$ be a $C^{3,\eta}$ domain for some
$\eta>0$.
Then
\begin{equation}\label{estimate-3.3.1}
\aligned
\big| \frac{\partial^2}{\partial x_i\partial y_j}\big\{ G_\varep^{\alpha\beta} (x,y)\big\}
& -\frac{\partial}{\partial x_i}
\big\{ \Phi_{\varep, k}^{\alpha\gamma} (x)\big\}\cdot
 \frac{\partial^2}{\partial x_k\partial y_\ell}\big\{ G_0^{\gamma \sigma} (x,y)\big\}
\cdot
\frac{\partial}{\partial y_j} \big\{ \Phi_{\varep, \ell}^{*\beta \sigma} (y)\big\}
\big|\\
&\le \frac{C\varep \ln \big[ \varep^{-1} |x-y|+2\big]}
{|x-y|^{d+1}}
\endaligned
\end{equation}
for any $x,y\in \Omega$, where $C$ depends only on $d$, $m$, $\mu$, $\lambda$, $\tau$
and $\Omega$.
\end{thm}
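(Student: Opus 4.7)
The plan is to iterate the one-derivative expansion of Theorem \ref{theorem-3.2.1}: first apply it in $x$ to expand $\nabla_x G_\varep$, then apply the analogous estimate for the adjoint operator $\mathcal{L}_\varep^*$ in $y$. Fix $x_0, y_0\in\Omega$, set $r=|x_0-y_0|/8$, and assume $\varep\le r$ (otherwise the claim follows at once from $|\nabla_x\nabla_y G_\varep|\le C|x-y|^{-d}$ in (\ref{1.17}) together with $\|\nabla\Phi_\varep\|_\infty+\|\nabla\Phi^*_\varep\|_\infty\le C$). Because $B(x_0,4r)\cap B(y_0,4r)=\emptyset$, the functions $G_\varep(\cdot,y_0)$ and $G_\varep(x_0,\cdot)$ are smooth in $D(x_0,4r)$ and $D(y_0,4r)$ respectively.

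Freeze $x_0$ and, as functions of $y$, define
\[
\Psi_\varep^\beta(y):=\frac{\partial G_\varep^{\alpha\beta}(x_0,y)}{\partial x_i}, \qquad
\Psi_0^\sigma(y):=\frac{\partial\Phi_{\varep,k}^{\alpha\gamma}(x_0)}{\partial x_i}\cdot \frac{\partial G_0^{\gamma\sigma}(x_0,y)}{\partial x_k}.
\]
Using $G_\varep^{\alpha\beta}(x,y)=G_\varep^{*\beta\alpha}(y,x)$, one checks that $\mathcal{L}_\varep^*(\Psi_\varep)=0=\mathcal{L}_0^*(\Psi_0)$ in $D(y_0,4r)$ and $\Psi_\varep=0=\Psi_0$ on $\Delta(y_0,4r)$. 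The adjoint version of Lemma \ref{lemma-3.2.1} (applied to $\mathcal{L}_\varep^*$, whose coefficients satisfy the same hypotheses) therefore gives
\[
\Big\|\tfrac{\partial \Psi_\varep^\beta}{\partial y_j}-\tfrac{\partial \Phi_{\varep,\ell}^{*\beta\sigma}}{\partial y_j}\cdot \tfrac{\partial \Psi_0^\sigma}{\partial y_\ell}\Big\|_{L^\infty(D(y_0,r))}
\]
is bounded by the four terms in the right-hand side of (\ref{estimate-3.2.1}), with $\Psi_\varep,\Psi_0$ in place of $u_\varep,u_0$ and $\Phi^*_\varep$ in place of $\Phi_\varep$. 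Evaluating the result at $y=y_0$ is exactly what we need, so it only remains to bound these four terms by $C\varep\ln[\varep^{-1}r+2]\,r^{-d-1}$.

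The first term is controlled by Theorem \ref{theorem-3.2.1} applied to $\Psi_\varep-\Psi_0$: for $y\in D(y_0,4r)$ one has $|x_0-y|\asymp r$, so $|\Psi_\varep(y)-\Psi_0(y)|\le C\varep\ln[\varep^{-1}r+2]\,r^{-d}$, whence $\tfrac{C}{r^{d+1}}\int_{D(y_0,4r)}|\Psi_\varep-\Psi_0|\,dy$ has the right size. The remaining three terms concern $\Psi_0$, which is a linear combination of $\partial_{x_k}G_0(x_0,\cdot)$ with coefficients bounded by (\ref{2.2.2}). The constant-coefficient, $C^{3,\eta}$ boundary Schauder estimates for $G_0(x_0,\cdot)$ away from $x_0$ give
\[
\|\nabla \Psi_0\|_{L^\infty(D(y_0,4r))}\le Cr^{-d},\ \ \|\nabla^2 \Psi_0\|_{L^\infty(D(y_0,4r))}\le Cr^{-d-1},\ \ [\nabla^2\Psi_0]_{C^{0,\rho}(D(y_0,4r))}\le Cr^{-d-1-\rho},
\]
and plugging these into (\ref{estimate-3.2.1}) shows that each of the remaining three contributions is likewise $O(\varep\ln[\varep^{-1}r+2]\,r^{-d-1})$.

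The main technical point is the third-order boundary regularity of $G_0(x_0,\cdot)$ needed for the Hölder seminorm of $\nabla^2\Psi_0$ up to $\partial\Omega$; this is exactly why the hypothesis $\Omega\in C^{3,\eta}$ is imposed, and it follows from the classical Schauder theory applied to the constant-coefficient system $\mathcal{L}_0^* G_0(x_0,\cdot)=0$ in $\Omega\setminus\{x_0\}$ with zero Dirichlet data. Once this is in hand, recalling $r=|x_0-y_0|/8$ converts the $r$-dependent bound into (\ref{estimate-3.3.1}).
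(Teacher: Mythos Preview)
Your proof is correct and follows essentially the same two-step iteration as the paper's argument. The only cosmetic difference is the order: the paper first expands in $y$ via the adjoint estimate (\ref{adjoint-estimate-2}) to define $u_\varep^\alpha(x)=\partial_{y_j}G_\varep^{\alpha\beta}(x,y_0)$ and $u_0^\alpha(x)=\partial_{y_j}\Phi_{\varep,\ell}^{*\beta\sigma}(y_0)\cdot\partial_{y_\ell}G_0^{\alpha\sigma}(x,y_0)$, and then applies Lemma~\ref{lemma-3.2.1} in the $x$ variable, whereas you expand in $x$ first and then apply the adjoint version of Lemma~\ref{lemma-3.2.1} in $y$; the term-by-term bookkeeping and the use of $C^{3,\eta}$ regularity for the third-order bounds on $G_0$ are identical.
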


\begin{proof}
Fix $x_0, y_0\in \Omega$.
Let $r=|x_0-y_0|/8$.
Since $|\nabla_x\nabla_y G_\varep (x,y)|\le C |x-y|^{-d}$, it suffices to consider the case
$\varep<r$. Fix $1\le \beta\le m$ and $1\le j\le d$, let
$$
\left\{
\aligned
u^\alpha_\varep (x) & =\frac{\partial G_\varep^{\alpha\beta}}{\partial y_j}  (x, y_0),\\
u_0^\alpha (x) & = \frac{\partial}{\partial y_j} \big\{ \Phi_{\varep, \ell}^{*\beta \sigma}\big\} (y_0)
\cdot \frac{\partial  G_0^{\alpha \sigma}}{\partial y_\ell} (x, y_0)
\endaligned
\right.
$$
in $ D_{4r}=D(x_0, 4r)$.
In view of (\ref{adjoint-estimate-2}) we obtain
\begin{equation}\label{3.3.2}
\| u_\varep -u_0\|_{L^\infty (D_{4r})}
\le {C \varep r^{-d} \ln \big[\varep^{-1} r +2\big]}.
\end{equation}
Since $\Omega$ is $C^{3,\eta}$, we have $\|\nabla u_0\|_{L^\infty(D_{4r})} \le Cr^{-d}$, 
\begin{equation}\label{3.3.3}
\|\nabla^2 u_0\|_{L^\infty (D_{4r})} \le Cr^{-d-1}
\quad \text{ and }\quad 
\|\nabla^2 u_0\|_{C^{0,\eta}(D_{4r})} \le Cr^{-d-1-\eta}.
\end{equation}
By Lemma \ref{lemma-3.2.1}, estimates (\ref{3.3.2}) and (\ref{3.3.3}) imply that
$$
\|\frac{\partial u_\varep^\alpha}{\partial x_i}
-\frac{\partial}{\partial x_i} \big\{ \Phi_{\varep, k}^{\alpha\gamma}\big\}
\cdot \frac{\partial u_0^\gamma}{\partial x_k}
\|_{L^\infty (D_r)}
\le \frac{C\varep \ln \big[\varep^{-1} r+2\big]}{r^{d+1}}.
$$
This gives the desired estimate (\ref{estimate-3.3.1}).
\end{proof}

Let $u_\varep\in H_0^1(\Omega)$ and $\mathcal{L}_\varep (u_\varep)=\text{div}(f)$ in $\Omega$,
where $f=(f_j^\beta)\in L^2(\Omega)$.
Then $\frac{\partial u_\varep^\alpha}{\partial x_i} =T_{\varep, ij}^{\alpha\beta} ( f_j^\beta)$,
where
$$
T_{\varep, ij}^{\alpha\beta}(g) (x)
=\text{\rm p.v.}
\int_\Omega \frac{\partial^2}{\partial x_i\partial y_j}
\big\{ G_\varep^{\alpha\beta} (x,y)\big\} g (y)\, dy.
$$
It is known that if $\Omega$ is $C^{1,\eta}$ and $A(y)$ satisfies
conditions (\ref{ellipticity})-(\ref{smoothness}),
operators $T_{\varep, ij}^{\alpha\beta}$ are uniformly
bounded on $L^p(\Omega)$ for $1<p<\infty$, and of weak type $(1,1)$ \cite{AL-1991}.

\begin{thm}\label{theorem-3.3.2}
Suppose that $\Omega$ and $A(y)$ satisfy the same conditions as in Theorem \ref{theorem-3.3.1}.
Let $1<p<\infty$ and $g\in L^p(\Omega)$. Then as $\varep\to 0$,
\begin{equation}\label{estimate-3.3.2}
\aligned
T_{\varep, ij}^{\alpha\beta} (g)
-\frac{\partial}{\partial x_i} \big\{ \Phi_{\varep, k}^{\alpha\gamma}\big\}
\cdot T_{0, k\ell}^{\gamma \sigma}
\left( \frac{\partial}{\partial x_j}
\big\{ \Phi_{\varep, \ell}^{*\beta \sigma}\big\} g\right)
& +\frac{\partial}{\partial x_i} \big\{ \Phi_{\varep, k}^{\alpha\gamma}\big\}
\cdot T_{0, k\ell}^{\gamma \sigma}
\left( \frac{\partial}{\partial x_j}
\big\{ \Phi_{\varep, \ell}^{*\beta \sigma}\big\} \right) \cdot g\\
\to 0 \quad \text{ in } L^q(\Omega),
\endaligned
\end{equation}
for any $1<q<p$.
\end{thm}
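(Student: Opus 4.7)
The plan is to reduce to smooth $g$ by density, establish the key identity $T_\varep(\mathbf{1}) = 0$, and then apply a commutator-type splitting combined with the kernel estimate from Theorem \ref{theorem-3.3.1}.

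Write $E_\varep(g) := T_{\varep,ij}^{\alpha\beta}(g) - \partial_{x_i}\{\Phi_{\varep,k}^{\alpha\gamma}\} T_{0,k\ell}^{\gamma\sigma}(\partial_{x_j}\{\Phi_{\varep,\ell}^{*\beta\sigma}\}g) + \partial_{x_i}\{\Phi_{\varep,k}^{\alpha\gamma}\} T_{0,k\ell}^{\gamma\sigma}(\partial_{x_j}\{\Phi_{\varep,\ell}^{*\beta\sigma}\})\cdot g$. By the uniform $L^r$-boundedness of $T_{\varep,ij}$ and $T_{0,k\ell}$ (\cite{AL-1991}) together with the uniform $L^\infty$ bounds on $\nabla\Phi_\varep$ and $\nabla\Phi_\varep^*$ from Proposition \ref{prop-2.2}, the map $g \mapsto E_\varep(g)$ is bounded $L^r \to L^r$ uniformly in $\varep$ for every $1 < r < \infty$. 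Since $C_c^\infty(\Omega)$ is dense in $L^p(\Omega)$ and $L^p(\Omega) \hookrightarrow L^q(\Omega)$ for $q<p$ on the bounded domain $\Omega$, a standard triangle-inequality argument reduces matters to showing that for each fixed $g\in C_c^\infty(\Omega)$, $\|E_\varep(g)\|_{L^\infty(\Omega)}\to 0$ as $\varep\to 0$.

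The second ingredient is the vanishing $T_\varep(\mathbf{1}) \equiv 0$ a.e.\ in $\Omega$ for every $\varep\ge 0$. Indeed, by the Dirichlet condition on $G_\varep$ one has $\int_\Omega \partial_{y_j} G_\varep^{\alpha\beta}(x,y)\,dy = \int_{\partial\Omega} G_\varep^{\alpha\beta}(x,y) n_j(y)\,dS(y) = 0$; differentiating in $x$ distributionally gives $T_\varep(\mathbf{1})=0$, and since $T_\varep(\mathbf{1})\in L^r$ this equality holds a.e. For smooth $g$, the standard add-and-subtract manipulation then yields, a.e.\ $x$,
\begin{equation*}
T_\varep(g)(x) = \int_\Omega \partial^2_{x_i, y_j} G_\varep^{\alpha\beta}(x,y)\,(g(y)-g(x))\,dy,
\end{equation*}
and similarly, for any bounded $f$,
\begin{equation*}
T_{0,k\ell}^{\gamma\sigma}(fg)(x) - g(x) T_{0,k\ell}^{\gamma\sigma}(f)(x) = \int_\Omega \partial^2_{x_k, y_\ell} G_0^{\gamma\sigma}(x,y) f(y)(g(y)-g(x))\,dy,
\end{equation*}
both integrals being absolutely convergent because $|g(y)-g(x)|\le \|\nabla g\|_\infty |x-y|$ tames the $|x-y|^{-d}$ singularity of the Calder\'on--Zygmund kernel. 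Choosing $f = \partial_{x_j}\{\Phi_{\varep,\ell}^{*\beta\sigma}\}$ and combining, the $g(x)\,T_0(f)(x)$ term cancels against $Y(g)$, producing
\begin{equation*}
E_\varep(g)(x) = \int_\Omega K_\varep^{\alpha\beta}(x,y)\,(g(y)-g(x))\,dy,
\end{equation*}
where $K_\varep^{\alpha\beta}(x,y)$ is precisely the kernel difference controlled by (\ref{estimate-3.3.1}).

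To close the proof I split the resulting integral into $\{|x-y|<\varep\}$, where I use the a priori bound $|K_\varep(x,y)|\le C|x-y|^{-d}$ coming from the individual kernels, and $\{|x-y|\ge \varep\}$, where (\ref{estimate-3.3.1}) gives $|K_\varep(x,y)|\le C\varep \ln[\varep^{-1}|x-y|+2]\,|x-y|^{-d-1}$. Combined with $|g(y)-g(x)|\le \|\nabla g\|_\infty|x-y|$, a short computation yields
\begin{equation*}
\|E_\varep(g)\|_{L^\infty(\Omega)} \le C\,\|\nabla g\|_\infty\,\varep\,\bigl(\ln[\varep^{-1}M+2]\bigr)^{\!2},
\end{equation*}
which tends to $0$ as $\varep\to 0$; together with the density reduction this completes the proof. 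The main technical point is the justification of the commutator-style splittings and of $T_\varep(\mathbf{1})=0$ at the level of principal values; both rest on the homogeneous Dirichlet condition for $G_\varep$ and standard Calder\'on--Zygmund arguments.
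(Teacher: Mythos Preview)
Your argument is correct and follows essentially the same route as the paper: reduce to smooth $g$ via the uniform $L^r$ bounds on $T_\varep$, $T_0$ and $\nabla\Phi_\varep$, $\nabla\Phi_\varep^*$; use $S_\varep(\mathbf{1})=0$ to rewrite $E_\varep(g)(x)$ as an integral of $K_\varep(x,y)\{g(y)-g(x)\}$ against the kernel difference controlled by Theorem~\ref{theorem-3.3.1}; then split near/far. The only cosmetic differences are that the paper splits at radius $\sqrt{\varep}$ (using $|g(y)-g(x)|\le 2\|g\|_\infty$ in the far region) to obtain the rate $C\sqrt{\varep}\,|\ln\varep|\,\|g\|_{C^1}$, whereas you split at $\varep$ and keep the Lipschitz bound throughout, yielding the sharper rate $C\varep(\ln[\varep^{-1}M+2])^2\|\nabla g\|_\infty$; and you supply the explicit justification of $T_\varep(\mathbf{1})=0$ via $G_\varep(x,\cdot)|_{\partial\Omega}=0$, which the paper simply asserts.
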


\begin{proof}
Let $S_\varep (g)$ denote the left hand side of (\ref{estimate-3.3.2}).
By uniform boundedness of $\|T^{\alpha\beta}_{\varep, ij}\|_{L^p\to L^p}$,
$\|\nabla\Phi_{\varep}\|_\infty$ and $\|\nabla\Phi^*_\varep\|_\infty$,
we see that the operators $S_\varep: L^p(\Omega)\to L^q(\Omega)$ are
uniformly bounded for $1<q<p< \infty$. As a result
we may assume that $g\in C^1(\br^d)$.

Note that $S_\varep (1)=0$ and
$$
S_\varep (g) (x) = S_\varep (g -g(x)) (x)
=\text{\rm p.v.}
\int_\Omega K_\varep (x,y) \big\{ g(y)-g(x)\big\} \, dy,
$$
where, by Theorem \ref{theorem-3.3.1} and the estimate 
$|\nabla_x\nabla_y G_\varep (x,y)|\le C |x-y|^{-d}$, 
the integral kernel $K_\varep(x,y)$ satisfies
$$
|K_\varep (x,y)|
\le C \min \left\{\frac{1}{|x-y|^d},
 \frac{ \varep \ln [\varep^{-1} |x-y|+2]}{|x-y|^{d+1}}\right\}.
$$
It follows that if $\varep <(1/2)$,
$$
\aligned
|S_\varep (g)(x)| & \le C \| g\|_{C^1(\Omega)} \int_{B(x,\sqrt{\varep})} 
\frac{dy}{|x-y|^{d-1}}
+ C \varep \| g\|_{L^\infty(\Omega)}
\int_{\Omega\setminus B(x,\sqrt{\varep})}
\frac{\ln [\varep^{-1}|x-y| +2]}{|x-y|^{d+1}}\, dy\\
&\le C \sqrt{\varep}|\ln {\varep}| \| g\|_{C^1(\Omega)}.
\endaligned
$$
Hence, $\| S_\varep (g)\|_{L^q(\Omega)} \to 0$ as $\varep\to 0$.
This completes the proof.
\end{proof}

Finally, we consider the Dirichlet-to-Neumann map $\Lambda_\varep$
associated with the operator $\mathcal{L}_\varep$.
Let $f\in H^{1/2}(\partial\Omega)$ and $u_\varep\in H^1(\Omega)$
be the solution of $\mathcal{L}_\varep (u_\varep)=0$ in $\Omega$
and $u_\varep =f$ on $\partial\Omega$.
The map $\Lambda_\varep: H^{1/2}(\partial\Omega)
\to H^{-1/2}(\partial\Omega)$ is defined by
$\Lambda_\varep (f) =\frac{\partial u_\varep}{\partial\nu_\varep}$.
It is known that
$\Lambda_\varep: W^{1,p}(\partial\Omega) \to L^p(\partial\Omega)$
is uniformly bounded for $1<p<\infty$, if $\Omega$ is $C^{1, \eta}$  \cite{KLS1}.
In the case that $\Omega$ is Lipschitz, the map is
uniformly bounded for $1<p<2+\delta$ if  $m=1$ \cite{Kenig-Shen-1}, and
for $p$ close to $2$ if $m>1$ \cite{Kenig-Shen-2}.
For simplicity we will assume $m=1$ and $A^*=A$
in the rest of this subsection.

Let 
$$
 \omega_\varep (x)= n_i(x) n_j (x) a_{ij}(x/\varep)\cdot 
[n_k(x) n_\ell (x) \hat{a}_{k\ell}]^{-1}
\cdot \frac{\partial}{\partial n} \big\{
\Phi_{\varep, s} \big\} \cdot n_s(x).
$$
Then $\|\omega_\varep\|_{L^\infty(\partial\Omega)} \le C$.
It follows from Theorem \ref{theorem-3.3.1} that
\begin{equation}\label{3.3.10}
\aligned
\big|
\frac{\partial}{\partial \nu_\varep (x)}
\big\{ P_\varep (x,y) \big\}
-& \omega_\varep (x) \cdot
\frac{\partial}{\partial \nu_0 (x)}
 \big\{ P_0 (x,y) \big\}
\cdot \omega_\varep (y)\big|\\
& \le 
\frac{C \varep \ln [\varep^{-1}|x-y|+2]}{|x-y|^{d+1}}
\endaligned
\end{equation}
for any $x,y\in \partial\Omega$.
Using $\Phi_{\varep, k} (x)=x_k$ on $\partial\Omega$, 
one may show that $n_in_j \hat{a}_{ij}\omega_\varep (x)=n_k\Lambda_\varep (x_k)$.

\begin{thm}\label{theorem-3.3.3}
Suppose that $\Omega$ and $A(y)$ satisfy the same conditions as in Theorem \ref{theorem-3.3.1}.
We further assume that $m=1$ and $A^*=A$.
Let $f\in H^1(\partial\Omega)$.
Then
\begin{equation}\label{estimate-3.3.3}
\aligned
\Lambda_\varep (f)-n_i\frac{\partial f}{\partial t_{ij}} \Lambda_\varep (x_j)
&  +\omega_\varep \big[  f\Lambda_0 (\omega_\varep) 
- \Lambda_0 (\omega_\varep f)\big]\\
&+ \omega_\varep n_i \frac{\partial f}{\partial t_{ij}}
\big[ \Lambda_0 (\omega_\varep x_j)
- x_j \Lambda_0 (\omega_\varep)\big]
\to 0,
\endaligned
\end{equation}
in $L^q(\partial\Omega)$ for any $1<q<2$, 
where $\frac{\partial f}{\partial t_{ij}}
=(n_i \frac{\partial}{\partial x_j} -n_j \frac{\partial}{\partial x_i})f$.
\end{thm}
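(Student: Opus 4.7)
The plan is to recognize the entire left-hand side of (\ref{estimate-3.3.3}) as the single quantity
\[
\Lambda_\varep(g_x)(x) - \omega_\varep(x)\Lambda_0(\omega_\varep g_x)(x),
\]
where, for each fixed $x\in\partial\Omega$,
\[
g_x(y) := f(y) - f(x) - n_i(x)\frac{\partial f}{\partial t_{ij}}(x)(y_j - x_j),
\]
and then to exploit the quadratic vanishing $|g_x(y)|\le C\|f\|_{C^2(\partial\Omega)}|y-x|^2$ together with the Poisson-kernel expansion (\ref{3.3.10}). Since $\mathcal{L}_\varep(1) = \mathcal{L}_0(1) = 0$, both $\Lambda_\varep$ and $\Lambda_0$ annihilate constants, and linearity combined with $\Lambda_\varep(y_j - x_j)(x) = \Lambda_\varep(x_j)(x)$ yields
\[
\Lambda_\varep(f)(x) - n_i(x)\frac{\partial f}{\partial t_{ij}}(x)\,\Lambda_\varep(x_j)(x) = \Lambda_\varep(g_x)(x).
\]
An identical algebraic manipulation on the $\Lambda_0$ side shows that
\[
\omega_\varep(x)\Lambda_0(\omega_\varep g_x)(x) = \omega_\varep\Lambda_0(\omega_\varep f) - \omega_\varep f\Lambda_0(\omega_\varep) - \omega_\varep n_i\frac{\partial f}{\partial t_{ij}}\big[\Lambda_0(\omega_\varep x_j) - x_j\Lambda_0(\omega_\varep)\big]
\]
at $x$, so subtraction produces exactly the four-term combination appearing in (\ref{estimate-3.3.3}).

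Assume next that $f\in C^2(\partial\Omega)$. Since $g_x(x)=0$ and the tangential derivatives of $g_x$ vanish at $y=x$, Taylor's theorem on the $C^{3,\eta}$ surface gives $|g_x(y)|\le C\|f\|_{C^2(\partial\Omega)}|y-x|^2$. Consequently the boundary integrals
\[
\Lambda_\varep(g_x)(x) = \int_{\partial\Omega}\frac{\partial P_\varep}{\partial\nu_\varep(x)}(x,y)\,g_x(y)\,dy, \qquad \Lambda_0(\omega_\varep g_x)(x) = \int_{\partial\Omega}\frac{\partial P_0}{\partial\nu_0(x)}(x,y)\,\omega_\varep(y) g_x(y)\,dy
\]
converge absolutely. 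Combining (\ref{3.3.10}) with the trivial bound $|\partial_{\nu_\varep(x)}P_\varep(x,y)|\le C|x-y|^{-d}$ gives
\[
\Big|\frac{\partial P_\varep}{\partial\nu_\varep(x)}(x,y) - \omega_\varep(x)\frac{\partial P_0}{\partial\nu_0(x)}(x,y)\omega_\varep(y)\Big| \le C\min\!\Big\{\frac{1}{|x-y|^d},\ \frac{\varep\ln[\varep^{-1}|x-y|+2]}{|x-y|^{d+1}}\Big\}.
\]
Multiplying by the quadratic bound on $g_x$ and integrating over the $(d-1)$-dimensional boundary, splitting at $|x-y|=\varep$, yields
\[
|\Lambda_\varep(g_x)(x) - \omega_\varep(x)\Lambda_0(\omega_\varep g_x)(x)| \le C\,\varep\big(\ln[\varep^{-1}M+2]\big)^2\|f\|_{C^2(\partial\Omega)},
\]
uniformly in $x\in\partial\Omega$; in particular (\ref{estimate-3.3.3}) holds in $L^\infty(\partial\Omega)$ for smooth $f$.

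To upgrade to general $f\in H^1(\partial\Omega)$, I would approximate $f$ in the $H^1$ topology by $C^2$ functions and rely on uniform-in-$\varep$ mapping estimates into $L^q(\partial\Omega)$, $1<q<2$, for each term on the left of (\ref{estimate-3.3.3}). The first two terms are handled by the uniform $W^{1,p}\to L^p$ boundedness of $\Lambda_\varep$ quoted before the theorem, together with $\|\nabla\Phi_{\varep,j}\|_\infty\le C$. The remaining four terms involve $\Lambda_0$ applied to products of the merely $L^\infty$, rapidly oscillating weight $\omega_\varep$ with $H^1$ data or coordinate functions; such products fail to lie in any $W^{1,p}$ uniformly in $\varep$, so the standard boundedness of $\Lambda_0$ is inapplicable. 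The two auxiliary inequalities for $\Lambda_0$ proved in Section 5 are designed precisely to deliver uniform $L^q$ control of such products for $1<q<2$, and the restriction $q<2$ in the statement reflects the strength of those inequalities. This density step is the main obstacle: the kernel asymptotics (\ref{3.3.10}) provide the pointwise bound for smooth $f$ with minimal effort, but the oscillating factor $\omega_\varep$ forces us to invoke the auxiliary $\Lambda_0$-inequalities from Section 5 in order to close the approximation argument.
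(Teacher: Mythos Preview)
Your proposal is correct and follows essentially the same route as the paper: reduce to $f\in C^2(\partial\Omega)$ via uniform $H^1\to L^q$ boundedness of the left-hand side (invoking the two Leibniz inequalities of Section~5 for the $\Lambda_0$--$\omega_\varep$ terms, which is precisely where the restriction $q<2$ enters), then for smooth $f$ exploit the quadratic vanishing of the tangential Taylor remainder $g_x$ together with the kernel asymptotics (\ref{3.3.10}) and a splitting at scale $\varep$ to obtain the $L^\infty$ bound $C_f\,\varep[\ln(\varep^{-1}M+2)]^2$. The only cosmetic difference is that the paper establishes the uniform boundedness first and then does the smooth-$f$ computation via the Poisson integral representation of $u_\varep(z)-f(x)$, whereas you package the same computation as ``$\Lambda_\varep(g_x)(x)-\omega_\varep(x)\Lambda_0(\omega_\varep g_x)(x)$'' before discussing density; the content is identical.
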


\begin{proof} By a linear change of variables 
Theorems \ref{theorem-5.1} and \ref{theorem-5.2} in Section 5 continue to hold for
$\Lambda=\Lambda_0$. It follows that
 $\|\Lambda_0 (\omega_\varep f)-f\Lambda_0 (\omega_\varep)\|_{L^2(\partial\Omega)}
\le C \| f\|_{H^1(\partial\Omega)}$
and $\|\Lambda_0 (\omega_\varep x_j)-x_j \Lambda_0 (\omega_\varep)\|_{L^p(\partial\Omega)}
\le C_p \| \omega_\varep\|_{L^p(\partial\Omega)} \le C_p$ for any $1<p<\infty$.
Hence, by H\"older's inequality,
the left hand side of (\ref{estimate-3.3.3}),
as an operator, is uniformly bounded from
$H^1(\partial\Omega)$ to $L^q(\partial\Omega)$ for any $1<q<2$.
Consequently, it suffices to show that for $f\in C^2(\partial\Omega)$,
the left hand side of (\ref{estimate-3.3.3}) goes to zero in 
$L^\infty(\partial\Omega)$, as $\varep\to 0$.

To this end, recall that $\Lambda_\varep (f)= \frac{\partial u_\varep}{\partial\nu_\varep}$,
where $u_\varep (z)=\int_\Omega P_\varep (z,y) f(y)\, d\sigma (y)$ for
$z\in \Omega$.
Write
\begin{equation}\label{3.3.12}
\aligned
u_\varep (z) -f(x)
&=\int_{\partial\Omega} P_\varep (z,y) \big\{ f(y)-f(x)\big\}\, d\sigma (y)\\
&=\int_{\partial\Omega} P_\varep (z,y)
\left\{ f(y)-f(x) -n_i(x) \frac{\partial f}{\partial t_{ij}(x)} \cdot (y_j-x_j)\right\}\, d\sigma (y)\\
&\qquad
+\int_{\partial\Omega}
P_\varep (z,y) n_i (x) \frac{\partial f}{\partial t_{ij}(x)} \cdot (y_j-x_j)\, d\sigma (y)\\
&=\int_{\partial\Omega} P_\varep (z,y)
\left\{ f(y)-f(x) -n_i(x) \frac{\partial f}{\partial t_{ij}(x)} \cdot (y_j-x_j)\right\}\, d\sigma (y)\\
&\qquad
+n_i (x) \frac{\partial f}{\partial t_{ij}(x)} \cdot \big[ \Phi_{\varep, j} (z)-x_j\big].
\endaligned
\end{equation}
Since $|f(y)-f(x)-n_i(x)\cdot \partial f/\partial t_{ij}(x) \cdot (y_j -x_j)|
\le C_f |y-x|^2$ for $x,y\in \partial\Omega$,
it follows by taking derivatives in $z$ and then letting $z\to x$ in (\ref{3.3.12}) that
$$
\aligned
\Lambda_\varep (f) (x)
&= 
\int_{\partial\Omega} \frac{\partial}{\partial \nu_\varep (x)}
\big\{ P_\varep (x,y)\big\}
\left\{ f(y)-f(x) -n_i(x) \frac{\partial f}{\partial t_{ij}(x)} \cdot (y_j-x_j)\right\}\, d\sigma (y)\\
&\qquad
+n_i (x) \frac{\partial f}{\partial t_{ij}(x)} \Lambda_\varep (x_j).
\endaligned
$$
In view of (\ref{3.3.10}) as well as the estimate
$|\nabla_x P_\varep(x,y)|\le C |\nabla_x\nabla_y G_\varep (x,y)|
\le C|x-y|^{-d}$, we obtain
$$
\aligned
& \Lambda_\varep (f) (x) =I_\varep (x)
+n_i  \frac{\partial f}{\partial t_{ij}} \Lambda_\varep (x_j)\\
&+
\int_{\partial\Omega} \omega_\varep (x) \frac{\partial}{\partial \nu_0 (x)}
\big\{ P_0 (x,y)\big\} \omega_\varep (y)
\left\{ f(y)-f(x) -n_i(x) \frac{\partial f}{\partial t_{ij}(x)} \cdot (y_j-x_j)\right\}\, d\sigma (y)\\
&=
I_\varep (x)
+n_i  \frac{\partial f}{\partial t_{ij}} \Lambda_\varep (x_j)
+\omega_\varep \big[\Lambda_0 (\omega_0 f)-f \Lambda_0 (\omega_\varep)\big]
+\omega_\varep n_i \frac{\partial f}{\partial t_{ij}}
\big[ x_j \Lambda_0 (\omega_\varep)
-\Lambda_0 (\omega_\varep x_j)\big],
\endaligned
$$
where the term $I_\varep (x)$ satisfies
$$
\aligned
|I_\varep (x)|
&\le C_f
\left\{  \varep \int_{\partial\Omega\setminus B(x, \varep)}
\frac{\ln [\varep^{-1} |x-y|+2]}{|y-x|^{d-1}}\, d\sigma(y)
+ \int_{B(x, \varep)\cap \partial\Omega} \frac{d\sigma(y)}{|y-x|^{d-2}}\right\}\\
&\le C_f\, \varep [\ln (\varep^{-1}M+2)]^2.
\endaligned
$$
This gives the desired estimate.
\end{proof}

\section{Asymptotic behavior of Neumann functions}

Throughout this section we will assume that $A(y)$ satisfies conditions
(\ref{ellipticity}), (\ref{periodicity}) and (\ref{smoothness}).
Under these conditions one may construct a matrix of Neumann functions
$N_\varep (x,y)=\big( N_\varep^{\alpha\beta} (x,y)\big)$ in a bounded 
Lipschitz domain $\Omega$ such that 
\begin{equation}\label{definition-of-Neumann functions}
\left\{
\aligned
\mathcal{L}_\varep \big\{ N_\varep^\beta (\cdot,y)\big\} & =e^\beta \delta_y(x) \quad \text{ in }
\Omega,\\
\frac{\partial}{\partial \nu_\varep} \big\{ N_\varep^\beta (\cdot, y)\big\}
& =-e^\beta |\partial\Omega|^{-1}\quad \text{ on } \partial\Omega,\\
\int_{\partial\Omega} N_\varep^\beta (x,y)d\sigma (x) & =0,
\endaligned
\right.
\end{equation}
where $N_\varep^\beta (x,y) =(N_\varep^{1\beta} (x,y), \dots, N_\varep^{m\beta}(x,y))$ and
$e^\beta=(0, \dots, 1, \dots, 0)$ with $1$ in the $\beta^{th}$ position.
Let $u_\varep \in H^1(\Omega)$ be a solution to
$\mathcal{L}_\varep (u_\varep)=F$ in $\Omega$ and $\frac{\partial u_\varep}{\partial\nu_\varep}
=g$ on $\partial\Omega$. Then
\begin{equation}\label{Neumann-representation}
u_\varep (x) -\frac{1}{|\partial\Omega|}
\int_{\partial\Omega} u_\varep
=\int_{\Omega} N_\varep (x,y) F(y)\, dy
+\int_{\partial\Omega} N_\varep (x,y) g(y)\, d\sigma(y).
\end{equation}
Under the additional assumption that $A^*=A$ and $\Omega$ is $C^{1,\eta}$
for some $\eta\in (0,1)$, 
it was proved in \cite{KLS1} that
\begin{equation}\label{Neumann-estimate-4.0.1}
\left\{
\aligned
|N_\varep (x,y)| & \le \frac{C}{|x-y|^{d-2}},\\
|\nabla_x N(x,y)|+|\nabla_y N_\varep (x,y)| &\le \frac{C}{|x-y|^{d-1}},\\
|\nabla_x\nabla_y N_\varep (x,y)|& \le \frac{C}{|x-y|^d}
\endaligned
\right.
\end{equation}
for any $x,y\in \Omega$.
The goal of this section is to establish  the asymptotic estimates
of $N_\varep(x,y)$ and $\nabla_x N_\varep (x,y)$
in Theorem \ref{theorem-B}.

\subsection{$L^\infty$ estimates}

The goal of this subsection is to prove the estimate (\ref{Neumann's-size}).
We also obtain an $O(\varep)$ estimate (up to a logarithmic factor) for
$\|u_\varep-u_0\|_{L^p(\Omega)}$ for solutions with Neumann conditions.
Recall that $D_r =D_r(x_0, r)=B(x_0,r)\cap\Omega$ and
$\Delta_r =\Delta(x_0,r)=B(x_0, r)\cap \partial\Omega$
for some $x_0\in \overline{\Omega}$ and $0<r<r_0$.
We begin with an $L^\infty$ estimate for local solutions.

\begin{lemma}\label{lemma-4.1.1}
Let $\Omega$ be a bounded $C^{1,\eta}$ domain for some $\eta\in (0,1)$.
Let $u_\varep\in H^1(D_{3r})$ and $ u_0\in W^{2,p}(D_{3r})$ for some $p>d$.
Suppose that $\mathcal{L}_\varep (u_\varep)=\mathcal{L}_0(u_0)$ in $D_{3r}$
and $\frac{\partial u_\varep}{\partial\nu_\varep}=\frac{\partial u_0}{\partial\nu_0}$
on $\Delta_{3r}$.
Then,  if $0<\varep<(r/2)$,
\begin{equation}\label{estimate-4.1.1}
\aligned
\| u_\varep -u_0\|_{L^\infty(D_r)}
&\le \frac{C}{r^d} \int_{D_{3r}} |u_\varep -u_0|
+C\varep \ln [\varep^{-1}r+2] \|\nabla u_0\|_{L^\infty(D_{3r})}\\
&\qquad\qquad
+C_p\, \varep r^{1-\frac{d}{p}} \|\nabla^2 u_0\|_{L^p(D_{3r})}.
\endaligned
\end{equation}
\end{lemma}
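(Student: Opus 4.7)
The plan is to parallel Lemma~\ref{lemma-3.1.1}, with the Green function of the auxiliary subdomain replaced by the Neumann function. The new difficulty relative to the Dirichlet setting is that the standard corrector $P_j^\beta+\varep\chi_j^\beta(x/\varep)$ does not satisfy the conormal matching condition of Proposition~\ref{prop-2.4}, so the conormal derivative of $w_\varep$ on $\Delta_{3r}$ has a piece that oscillates at order $O(1)$ rather than $O(\varep)$; converting this into a useful $O(\varep)$ bound is the main obstacle and is what produces the logarithmic factor.

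After rescaling to $r=1$, I fix a $C^{1,\eta}$ subdomain $\wD$ with $D_2\subset\wD\subset D_3$ and $\Delta_2\subset\partial\wD\cap\partial\Omega$, and set
\begin{equation*}
w_\varep = u_\varep - u_0 - \varep\chi_j^\beta(x/\varep)\frac{\partial u_0^\beta}{\partial x_j}.
\end{equation*}
Applying Proposition~\ref{prop-2.1} with $V_{\varep,j}^\beta=P_j^\beta+\varep\chi_j^\beta(x/\varep)$ kills the last line of (\ref{formula-2.1}) identically and leaves $\mathcal{L}_\varep(w_\varep)=\varep\partial_i\{g_{ijk}(x/\varep)\,\partial^2 u_0/\partial x_j\partial x_k\}$ for bounded, periodic $g$. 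A direct computation using $\partial u_\varep/\partial\nu_\varep=\partial u_0/\partial\nu_0$ on $\Delta_3$ and the definition (\ref{definition-of-B}) of $b_{ij}^{\alpha\beta}$ produces
\begin{equation*}
\Big(\frac{\partial w_\varep}{\partial\nu_\varep}\Big)^{\!\alpha} = n_i b_{ij}^{\alpha\beta}(x/\varep)\frac{\partial u_0^\beta}{\partial x_j} - \varep\, n_i a_{ij}^{\alpha\beta}(x/\varep)\chi_k^{\beta\gamma}(x/\varep)\frac{\partial^2 u_0^\gamma}{\partial x_j\partial x_k}\quad\text{on }\Delta_3.
\end{equation*}

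I then split $w_\varep=w_\varep^{(1)}+w_\varep^{(2)}$ in $\wD$, where $w_\varep^{(2)}$ is the $\mathcal{L}_\varep$-harmonic function with zero conormal derivative on $\Delta_2$ and matching $w_\varep$ on $\partial\wD\setminus\partial\Omega$, so that $w_\varep^{(1)}$ solves a Neumann problem in $\wD$. The piece $w_\varep^{(2)}$ is bounded by a Neumann analogue of Lemma~\ref{lemma-3.1.0} derived from the uniform Neumann $L^p$ theory of \cite{KLS1,Kenig-Shen-2} (the symmetry assumption $A^*=A$ of Theorem~\ref{theorem-B} enters here), contributing $Cr^{-d}\int_{D_{3r}}|u_\varep-u_0|+C\varep\|\nabla u_0\|_{L^\infty(D_{3r})}$. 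The piece $w_\varep^{(1)}$ is represented via the Neumann function $\wN_\varep$ of $\wD$. The interior integral, integrated by parts against $\wN_\varep$ and estimated using $\|\nabla_y\wN_\varep(x,\cdot)\|_{L^{p'}(\wD)}\le C_p$ for $p>d$, yields the $C_p\,\varep\,r^{1-d/p}\|\nabla^2 u_0\|_{L^p}$ term exactly as in the proof of Lemma~\ref{lemma-3.1.1}. The small, manifestly $O(\varep)$ piece of the boundary data is controlled in the same way.

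The decisive step is the boundary integral $\int_{\Delta_3}\wN_\varep(x,y)\,n_i b_{ij}(y/\varep)\,\partial u_0^\beta/\partial y_j\,d\sigma(y)$, whose integrand is $O(1)$ rather than $O(\varep)$. The key observation is that the antisymmetry $F_{kij}^{\alpha\beta}=-F_{ikj}^{\alpha\beta}$ from Remark~\ref{remark-2.1} gives the identity
\begin{equation*}
n_i b_{ij}^{\alpha\beta}(y/\varep) = \tfrac{\varep}{2}\Big(n_i\tfrac{\partial}{\partial y_k}-n_k\tfrac{\partial}{\partial y_i}\Big)\!\bigl[F_{kij}^{\alpha\beta}(y/\varep)\bigr],
\end{equation*}
i.e. $n_i b_{ij}(y/\varep)$ is $\varep$ times a tangential derivative of a bounded function. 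Integrating by parts tangentially along $\Delta_3$ (no boundary-of-boundary contribution, since after a suitable cutoff $\partial\Delta_3$ lies in the interior of $\partial\wD$) transfers the tangential derivative onto $\wN_\varep(x,y)\,\partial u_0^\beta/\partial y_j$, producing the bound $C\varep\|\nabla u_0\|_{L^\infty(D_3)}\int_{\Delta_3}|\nabla_y\wN_\varep(x,y)|\,d\sigma(y)$ plus a lower-order $O(\varep\|\nabla^2 u_0\|_{L^p})$ term. Splitting at scale $\varep$ and using $|\nabla_y\wN_\varep(x,y)|\le C|x-y|^{1-d}$ gives $\int_\varep^r s^{-1}\,ds\sim\ln[\varep^{-1}r+2]$ on the far part, while on the near part one bypasses the integration by parts and uses $|\wN_\varep(x,y)|\le C|x-y|^{2-d}$ directly to absorb the $O(\varep)$ contribution. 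Undoing the rescaling yields (\ref{estimate-4.1.1}); the hard part, as indicated, is this final tangential-derivative-and-borderline-singular-integral estimate.
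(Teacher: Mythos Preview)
Your proposal is essentially the paper's proof: the same $w_\varep$, the same recognition that the antisymmetry $F_{kij}=-F_{ikj}$ turns the $O(1)$ conormal piece $n_ib_{ij}(y/\varep)\,\partial_ju_0$ into $\varep$ times a tangential derivative, and the same mechanism producing the logarithm. Two small points where the paper is tighter. First, your split $w_\varep=w_\varep^{(1)}+w_\varep^{(2)}$ with $w_\varep^{(2)}$ carrying mixed Neumann--Dirichlet data is not well posed as written; the paper uses a pure Neumann representation on $\wD$ and decomposes into three pieces $w_\varep^{(1)}+w_\varep^{(21)}+w_\varep^{(22)}$, where $w_\varep^{(1)}$ is the interior integral, $w_\varep^{(21)}$ is the explicit boundary integral over all of $\partial\wD$ carrying the tangential-derivative data, and $w_\varep^{(22)}$ is $\mathcal{L}_\varep$-harmonic with vanishing conormal on $\Delta_2$ (so that the boundary $L^\infty$ estimate from \cite{KLS1} applies). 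Second, for the decisive boundary integral the paper moves the tangential derivative onto $\wN_\varep$ over the closed surface $\partial\wD$, then subtracts $f(\hat x)$ (legitimate since $\int_{\partial\wD}(n_i\partial_j-n_j\partial_i)\wN_\varep\,d\sigma=0$) and estimates $\int_{\partial\wD}|f(y)-f(\hat x)|\,|y-\hat x|^{1-d}\,d\sigma$ using the modulus of continuity of $f=F(\cdot/\varep)\nabla u_0$; this avoids the cutoff you would need to make your ``near part: bypass IBP / far part: IBP'' split rigorous, but yields the same bound.
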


\begin{proof}
By rescaling we may assume $r=1$.
Choose a $C^{1,\eta}$ domain $\widetilde{D}$ such that $D_2\subset \widetilde{D}\subset{D}_3$.
Let 
$$
w_\varep =u_\varep (x)-u_0(x)- \varep \chi_j^\beta (x/\varep) \frac{\partial u_0^\beta}
{\partial x_j}.
$$
Using Proposition \ref{prop-2.1} with 
$V_{\varep, j}^\beta =P_j^\beta (x)+\varep \chi_j^\beta (x/\varep)$, we see that
\begin{equation}\label{4.1.1-0}
\left(\mathcal{L}(w_\varep) \right)^\alpha =\varep 
\frac{\partial}{\partial x_i} \left\{ b_{ijk}^{\alpha\beta} (x/\varep)
\frac{\partial^2 u_0^\beta}{\partial x_j\partial x_k}\right\},
\end{equation}
where $b_{ijk}^{\alpha \beta} (y)
=F_{jik}^{\alpha\beta} (y)+a_{ij}^{\alpha\gamma} (y) \chi_k^{\gamma\beta}(y)$
is a bounded periodic function.
Also, by a direct computation, we have
\begin{equation}\label{4.1.1-1}
\aligned
\left(\frac{\partial w_\varep}{\partial \nu_\varep}\right)^\alpha
= & \left(\frac{\partial u_\varep}{\partial \nu_\varep}\right)^\alpha
-\left(\frac{\partial u_0}{\partial \nu_0}\right)^\alpha
+
\frac{\varep}{2}
\left( n_i\frac{\partial}{\partial x_j} -n_j\frac{\partial}{\partial x_i}\right)
\left\{ F_{jik}^{\alpha\gamma}(x/\varep) \frac{\partial u_0^\gamma}
{\partial x_k}\right\}\\
& \qquad
-\varep n_i b_{ijk}^{\alpha\beta} (x/\varep)
\frac{\partial^2 u_0^\beta}{\partial x_j\partial x_k}
\endaligned
\end{equation}
(see \cite[Lemma 5.1]{KLS2}). 
Let $w_\varep =w_\varep^{(1)}  +w_\varep^{(2)}$, where
\begin{equation}\label{4.1.1-2}
\big( w_\varep^{(1)} (x)\big)^\alpha
=-\varep \int_{\wD} \frac{\partial}{\partial y_i}
\big\{ \wN^{\alpha\beta}_\varep (x,y)\big\}\cdot
b_{ijk}^{\beta\gamma} (y/\varep) \cdot \frac{\partial^2 u_0^\gamma}
{\partial y_j\partial y_k} \, dy
\end{equation}
and $\wN_\varep (x,y)$
denotes the matrix of Neumann functions for $\mathcal{L}_\varep$
in $\wD$.
Since $|\nabla_y \wN_\varep (x,y)|\le C |x-y|^{1-d}$, it follows from H\"older's
inequality that 
\begin{equation}\label{4.1.1}
\|w_\varep^{(1)}\|_{L^\infty (D_2)} \le C_p \, \varep \|\nabla^2 u_0\|_{L^p(D_3)}
\qquad \text{ for any } p>d.
\end{equation}

To estimate $w_\varep^{(2)}$, we observe that
$\mathcal{L}_\varep (w_\varep^{(2)}) =0$ in $\wD$ and
\begin{equation}\label{4.1.1-3}
\left(\frac{\partial w^{(2)}_\varep}{\partial \nu_\varep}\right)^\alpha
= \left(\frac{\partial u_\varep}{\partial \nu_\varep}\right)^\alpha
-\left(\frac{\partial u_0}{\partial \nu_0}\right)^\alpha
+
\frac{\varep}{2}
\left( n_i\frac{\partial}{\partial x_j} -n_j\frac{\partial}{\partial x_i}\right)
\left\{ F_{jik}^{\alpha\gamma}(x/\varep) \frac{\partial u_0^\gamma}
{\partial x_k}\right\}
\end{equation}
on $\partial\wD$. Let $w_\varep^{(2)} =w_\varep^{(21)} +w_\varep^{(22)}$, where
\begin{equation}\label{4.1.1-4}
\big( w_\varep^{(21)}\big)^\alpha (x)
=-\frac{\varep}{2}
\int_{\partial\wD} 
\left( n_i\frac{\partial}{\partial y_j} -n_j\frac{\partial}{\partial y_i}\right)
\big\{ \wN^{\alpha\beta}_\varep (x,y)\big\}
\cdot F_{jik}^{\beta\gamma}(y/\varep) \frac{\partial u_0^\gamma}
{\partial y_k}\, d\sigma (y).
\end{equation}
For each $x\in \wD$, choose 
$\hat{x}\in\partial \wD$ such that $|\hat{x}-x|=\text{dist}(x, \partial\wD)$.
Note that for $y\in \partial\wD$, $|y-\hat{x}|
\le |y-x| +|x-\hat{x}|\le 2|y-x|$. Thus,
$|\nabla_y \wN_\varep (x,y)|\le C |y-\hat{x}|^{1-d}$ and
$$
\aligned
|\big( w_\varep^{(21)}\big)^\alpha (x)|
& = \frac{\varep}{2}
\big|\int_{\partial\wD} 
\left( n_i\frac{\partial}{\partial y_j} -n_j\frac{\partial}{\partial y_i}\right)
\big\{ \wN^{\alpha\beta}_\varep (x,y)\big\}
\cdot \big\{ f_{ji}^\beta (y)-f_{ji}^\beta (\hat{x})\big\} \, d\sigma (y)\big|\\
&
\le C\varep \int_{\partial\wD}
\frac{|f(y)-f(\hat{x})|}{|y-\hat{x}|^{d-1}}\, d\sigma(y),
\endaligned
$$
where $f(y)=(f_{ji}^\beta (y)) =\big(F_{jik}^{\beta\gamma}(y/\varep) \frac{\partial u_0^\gamma}
{\partial y_k} (y)\big)$.
Since $\| f\|_{L^\infty(D_3)} \le C \|\nabla u_0\|_{L^\infty(D_3)}$ and
$$
|f(y)-f(\hat{x})|
\le C\varep^{-1} |y-\hat{x}| \|\nabla u_0\|_{L^\infty(D_3)}
+|y-\hat{x}|^\rho \|\nabla u_0\|_{C^{0,\rho}(D_3)},
$$
where $0<\rho<\eta$ and we have used the fact $\|F_{jik}^{\beta\gamma}\|_{C^1 (Y)}\le C$, it follows that
$$
\aligned
|w_\varep^{(21)} (x)|
& \le C\varep \| \nabla u_0\|_{L^\infty(D_3)}
\int_{\partial\wD\setminus B(\hat{x}, \varep)} |\hat{x}-y|^{1-d} \, d\sigma(y)\\
&\qquad + C\|\nabla u_0\|_{L^\infty(D_3)}\int_{B(\hat{x}, \varep)\cap \partial\wD} 
|y-\hat{x}|^{2-d}\, d\sigma (y)\\
&\qquad +C\varep \|\nabla u_0\|_{C^{0,\rho}(D_3)}
\int_{B(\hat{x}, \varep)\cap \partial\wD} 
 |y-\hat{x}|^{1-d+\rho}\, d\sigma (y)\\
& \le C\varep \ln[\varep^{-1}+2] \|\nabla u_0\|_{L^\infty(D_3)}
+C\varep^{1+\rho} \| \nabla u_0\|_{C^{0,\rho}(D_3)}.
\endaligned
$$ 
By Sobolev imbedding, this implies that
\begin{equation}\label{4.1.2}
\|w_\varep^{(21)}\|_{L^\infty(D_2)}
 \le C\varep \ln[\varep^{-1}+2] \|\nabla u_0\|_{L^\infty(D_3)}
+ C_p\, \varep \|\nabla^2 u_0\|_{L^p(D_3)}
\end{equation}
for any $p>d$.

Finally, to estimate $w_\varep^{(22)}$, we note that
$\mathcal{L}_\varep (w_\varep^{(22)}) =0$ in $D_2$ and
$\frac{\partial}{\partial\nu_\varep} \{ w_\varep^{(22)}\}=
\frac{\partial u_\varep}{\partial\nu_\varep} -
\frac{\partial u_0}{\partial\nu_0} =0$ on $\Delta_2$.
It follows from \cite[Theorem 3.1]{KLS1} that
$$
\aligned
\| w_\varep^{(22)}\|_{L^\infty(D_1)}
&\le C \int_{D_2} |w_\varep^{(22)}|\, dx\\
& \le C \int_{D_2} |u_\varep-u_0|\, dx
+C\varep \|\nabla u_0\|_{L^\infty(D_2)}\\
&\qquad\qquad
+C \|w_\varep^{(1)}\|_{L^\infty(D_2)}
+C \|w_\varep^{(21)}\|_{L^\infty(D_2)}.
\endaligned
$$
Hence,
$$
\aligned
\|u_\varep -u_0\|_{L^\infty(D_1)}
\le & C \int_{D_2} |u_\varep-u_0|\, dx
+C\varep \|\nabla u_0\|_{L^\infty(D_2)}\\
&\qquad +C \|w_\varep^{(1)}\|_{L^\infty(D_2)}
+C \|w_\varep^{(21)}\|_{L^\infty(D_2)}.
\endaligned
$$
This, together with (\ref{4.1.1}) and (\ref{4.1.2}), gives (\ref{estimate-4.1.1}).
\end{proof}

The next lemma on the traces of fractional integrals is known.
We provide a proof for the sake of completness.

\begin{lemma}\label{lemma-4.1.2}
Let $\Omega$ be a bounded Lipschitz domain.
Let $g\in C_0^1 (\Omega\cap B(y_0,r))$ for some $y_0\in \overline{\Omega}$ and
$$
u(x)=\int_\Omega \frac{g(y)\, dy}{|x-y|^{d-1}}.
$$
Then $\|u\|_{L^2(\partial\Omega)} \le Cr^{1/2} \| g\|_{L^2(\Omega)}$.
\end{lemma}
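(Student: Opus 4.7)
The plan is to estimate $\|u\|_{L^2(\partial\Omega)}^2$ directly by splitting $\partial\Omega$ into the near piece $\Sigma_n=\partial\Omega\cap B(y_0,2r)$ and the far piece $\Sigma_f=\partial\Omega\setminus B(y_0,2r)$ and bounding each by $Cr\|g\|_{L^2(\Omega)}^2$.

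For $x\in\Sigma_f$ one has $|x-y|\ge|x-y_0|/2$ for all $y\in B(y_0,r)$, so a crude Cauchy--Schwarz gives $|u(x)|\le Cr^{d/2}|x-y_0|^{1-d}\|g\|_{L^2(\Omega)}$. The standard Lipschitz bound $\sigma(\partial\Omega\cap B(y_0,t))\le Ct^{d-1}$ yields $\int_{\Sigma_f}|x-y_0|^{2-2d}\,d\sigma\le Cr^{1-d}$, and multiplying produces $\int_{\Sigma_f}u^2\,d\sigma\le Cr\|g\|_{L^2(\Omega)}^2$.

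For $x\in\Sigma_n$ I would apply a weighted Cauchy--Schwarz, choosing any exponent $\beta$ in the (nonempty for $d\ge 2$) interval $(1-d,2-d)$, to obtain
$$u(x)^2\le\Bigl(\int g(y)^2|x-y|^\beta\,dy\Bigr)\Bigl(\int_{B(y_0,r)}|x-y|^{2-2d-\beta}\,dy\Bigr).$$
The condition $\beta<2-d$ makes the solid integral convergent at the singularity $y=x$ and bounded by $Cr^{2-d-\beta}$ for $x\in\Sigma_n$. Integrating against $d\sigma$ and applying Fubini reduces the problem to controlling the boundary integral $J(y)=\int_{\Sigma_n}|x-y|^\beta\,d\sigma(x)$; a dyadic argument based on $\sigma(\partial\Omega\cap B(y,t))\le Ct^{d-1}$, convergent thanks to $\beta>1-d$, gives $J(y)\le Cr^{\beta+d-1}$. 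The two bounds combine to $\int_{\Sigma_n}u^2\,d\sigma\le Cr\|g\|_{L^2(\Omega)}^2$.

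The main obstacle is precisely choosing $\beta$ in the narrow window $(1-d,2-d)$: if $\beta\ge 2-d$ the solid integral diverges at $y=x$, while if $\beta\le 1-d$ the boundary integral fails to converge at $x=y$; these are dual conditions reflecting the codimension-one nature of $\partial\Omega$. Once $\beta$ is fixed (for instance $\beta=\tfrac{3}{2}-d$), the two powers of $r$ multiply to exactly the linear factor needed, and the remaining bookkeeping (Fubini, the Lipschitz graph bound on $\sigma$, and summing the near and far contributions) is routine. Taking square roots then yields $\|u\|_{L^2(\partial\Omega)}\le Cr^{1/2}\|g\|_{L^2(\Omega)}$.
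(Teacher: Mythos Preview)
Your argument is correct and genuinely different from the paper's. The paper proceeds by a trace identity: it chooses a vector field $v$ transversal to $\partial\Omega$, uses the divergence theorem to bound $\int_{\partial\Omega}|u|^2\,d\sigma$ by $Cr^{-1}\|u\|_{L^2(\Omega)}^2+Cr\|\nabla u\|_{L^2(\Omega)}^2$, and then invokes the Hardy--Littlewood--Sobolev inequality (together with the support condition on $g$) for the first term and the $L^2$ boundedness of the Riesz transforms for the second. Your approach, by contrast, is a direct kernel computation: a near/far split, a weighted Cauchy--Schwarz with weight $|x-y|^\beta$ for $\beta\in(1-d,2-d)$, and only the Ahlfors-type bound $\sigma(\partial\Omega\cap B(\cdot,t))\le Ct^{d-1}$ for Lipschitz boundaries. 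Your route is more elementary in that it avoids both Calder\'on--Zygmund theory and HLS, and it makes the appearance of the factor $r$ completely transparent through explicit scaling; the paper's route is shorter once those standard tools are granted and fits the usual pattern of passing from boundary to interior norms via a Rellich-type identity.
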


\begin{proof}
By the well known estimates for fractional and singular integrals,
$\|\nabla u\|_{L^2(\Omega)}
\le C \| g\|_{L^2(\Omega)}$ and $\| u\|_{L^2(\Omega)} \le C \| g\|_{L^p(\Omega)}$,
where $p=\frac{2d}{d+2}$. Choose a vector field $v\in C_0^\infty (\br^d, \br^d)$
such that
$<v, n>\ge c_0>0$ on $\partial\Omega$ and $|v|\le 1$, $|\nabla v|\le C/r$
in $\br^d$.
It follows from integration by parts and H\"older's inequality that
$$
\aligned
c_0\int_{\partial \Omega}|u|^2\, d\sigma
& \le \int_{\partial\Omega} <v,n>|u|^2\, d\sigma
\le Cr^{-1}\int_\Omega |u|^2\, dx
+C \int_{\Omega} |u||\nabla u|\, dx\\
& \le Cr^{-1} \int_\Omega |u|^2\, dx +Cr \int_\Omega |\nabla u|^2\, dx\\
&\le Cr^{-1} \| g\|^2_{L^p(\Omega)} + Cr \|g\|_{L^2(\Omega)}^2\\
&\le Cr \| g\|^2_{L^2(\Omega)},
\endaligned
$$
where we have used the fact supp$(g)\subset B(y_0,r)$.
This completes the proof.
\end{proof}

We are now ready to prove the estimate (\ref{Neumann's-size}).

\begin{thm}\label{theorem-4.1.2}
Suppose that $A(y)$ satisfies conditions (\ref{ellipticity}),
(\ref{periodicity}), (\ref{smoothness}) and $A^*=A$.
Let $\Omega$ be a bounded $C^{1,1}$ domain.
Then
\begin{equation}\label{estimate-4.1.2}
|N_\varep (x,y)-N_0 (x,y)|\le
\frac{C\varep \ln [\varep^{-1}|x-y|+2]}{|x-y|^{d-1}},
\end{equation}
for any $x, y\in \Omega$, where $C$ depends only on $d$, $m$, $\mu$, $\lambda$,
$\tau$ and $\Omega$.
\end{thm}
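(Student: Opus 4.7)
The proof will parallel that of Theorem~\ref{theorem-3.1.1}, with Lemma~\ref{lemma-4.1.1} and the Neumann representation (\ref{Neumann-representation}) playing the roles of Lemma~\ref{lemma-3.1.1} and the Green representation. Fix $x_0, y_0 \in \Omega$ and set $r = |x_0 - y_0|/8$. If $\varepsilon \ge r$, the a priori estimate in (\ref{Neumann-estimate-4.0.1}) yields $|N_\varepsilon - N_0| \le 2C|x-y|^{2-d} \le 16C\varepsilon|x-y|^{1-d}$, which is absorbed into the target right-hand side of (\ref{estimate-4.1.2}); henceforth assume $\varepsilon < r$.

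Dualize by taking $f \in C_0^\infty(D(y_0, r))$ with $\int_\Omega f = 0$ and letting $u_\varepsilon, u_0 \in H^1(\Omega)$ be the Neumann solutions of $\mathcal{L}_\varepsilon u_\varepsilon = f$ and $\mathcal{L}_0 u_0 = f$ with vanishing conormal derivative and $\int_{\partial\Omega} u = 0$, so that $u_\varepsilon(x) = \int_\Omega N_\varepsilon(x,y) f(y)\,dy$ by (\ref{Neumann-representation}) and similarly for $u_0$. Standard $W^{2,p}$ Neumann regularity on $C^{1,1}$ domains, used together with the Neumann-function representation and the fact that $\operatorname{dist}(\text{supp} f, D(x_0, 3r)) \ge r$, furnishes for $p > d$ the localized bounds $\|\nabla u_0\|_{L^\infty(D(x_0, 3r))} \le C_p r^{1-d/p}\|f\|_{L^p}$ and $\|\nabla^2 u_0\|_{L^p(D(x_0, 3r))} \le C_p\|f\|_{L^p}$. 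The analytic core is an intermediate $L^1$-bound on $u_\varepsilon - u_0$ in $D(x_0, 3r)$: setting $w_\varepsilon = u_\varepsilon - u_0 - \varepsilon\chi_j^\beta(x/\varepsilon)\partial u_0^\beta/\partial x_j$, Proposition~\ref{prop-2.1} with $V_{\varepsilon,j}^\beta = P_j^\beta + \varepsilon\chi_j^\beta(x/\varepsilon)$ and the conormal computation (\ref{4.1.1-1}) reduce $\mathcal{L}_\varepsilon w_\varepsilon$ to $\varepsilon\partial_i\{b_{ijk}(x/\varepsilon)\partial^2 u_0/\partial x_j\partial x_k\}$ in $\Omega$ with a conormal trace equal to an antisymmetric tangential-divergence term $\frac{\varepsilon}{2}(n_i\partial_j - n_j\partial_i)\{F_{jik}(\cdot/\varepsilon)\partial u_0/\partial x_k\}$ plus a lower-order $\varepsilon \cdot \nabla^2 u_0$ piece on $\partial\Omega$. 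Testing against $w_\varepsilon$, integrating the tangential surface term by parts on $\partial\Omega$, and invoking trace and Hardy inequalities yields (\ref{H-0.5-estimate}), in particular $\|w_\varepsilon\|_{H^{1/2}(\Omega)} + \|\delta^{1/2}\nabla w_\varepsilon\|_{L^2(\Omega)} \le C\varepsilon\|f\|_{L^2}$; combined with Sobolev embedding, the corrector bound $\|\varepsilon\chi\nabla u_0\|_{L^\infty} \le C\varepsilon\|\nabla u_0\|_{L^\infty}$, and H\"older on the localized ball, one obtains $\int_{D(x_0, 3r)}|u_\varepsilon - u_0|\,dx \le C_p\varepsilon r^{d+1-d/p}\ln[\varepsilon^{-1}r+2]\|f\|_{L^p}$.

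Plugging this into Lemma~\ref{lemma-4.1.1} applied on $D(x_0, r) \subset D(x_0, 3r)$ (where $f\equiv 0$, so $\mathcal{L}_\varepsilon u_\varepsilon = \mathcal{L}_0 u_0 = 0$ and both conormal derivatives vanish) produces $|u_\varepsilon(x_0) - u_0(x_0)| \le C_p\varepsilon r^{1-d/p}\ln[\varepsilon^{-1}r+2]\|f\|_{L^p}$, whence by duality over mean-zero test functions $\|N_\varepsilon(x_0, \cdot) - N_0(x_0, \cdot) - c^*\|_{L^{p'}(D(y_0, r))} \le C_p\varepsilon r^{1-d/p}\ln[\varepsilon^{-1}r+2]$ for some constant $c^*$. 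A second application of Lemma~\ref{lemma-4.1.1} in the $y$-variable to $v_\varepsilon(y) = N_\varepsilon(x_0, y)$, $v_0(y) = N_0(x_0, y)$ on $D(y_0, r)$ (both solve $\mathcal{L}^* v = 0$ there with identical conormal trace $-|\partial\Omega|^{-1}$) closes the argument: the derivative terms are controlled by $C\varepsilon r^{1-d}\ln[\varepsilon^{-1}r+2]$ via the a priori bounds $\|\nabla_y N_0\|_{L^\infty(D(y_0, r))} \le Cr^{1-d}$ and $\|\nabla^2_y N_0\|_{L^p(D(y_0, r))} \le Cr^{d/p-d}$ (boundary $W^{2,p}$ theory for the $C^{1,1}$ Neumann problem applied to $N_0(x_0, \cdot)$), while $\int_{D(y_0, r)}|v_\varepsilon - v_0|\,dy$ decomposes as $\int|v_\varepsilon - v_0 - c^*| + r^d|c^*|$; the first piece is $\le C\varepsilon r\ln[\varepsilon^{-1}r+2]$ by H\"older and the preceding duality estimate, and $|c^*|$ is controlled through the normalization $\int_{\partial\Omega}(v_\varepsilon - v_0)\,d\sigma = 0$ (valid by the symmetry $A^* = A$, via $N_\varepsilon(x, y) = N_\varepsilon(y, x)^T$) combined with the off-diagonal size bound $|v_\varepsilon - v_0| \le C|x-y|^{2-d}$ on $D(y_0, r)$. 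Collecting yields (\ref{estimate-4.1.2}).

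The principal obstacle is the $H^{1/2}$ energy estimate on $w_\varepsilon$: unlike in the Dirichlet case, $w_\varepsilon$ does not vanish on $\partial\Omega$, so the boundary contribution in the energy identity must be tamed by exploiting the antisymmetric structure $(n_i\partial_j - n_j\partial_i)$ of $\partial w_\varepsilon/\partial\nu_\varepsilon$ via tangential integration by parts on $\partial\Omega$ --- this mechanism is also what forces the logarithmic factor in the final bound. A secondary subtlety is that Neumann solutions are unique only modulo constants, so the duality step controls $N_\varepsilon - N_0$ only up to an additive constant $c^*$, which must be tracked and absorbed through the symmetry-induced normalization rather than discarded.
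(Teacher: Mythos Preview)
Your overall architecture is correct --- two applications of Lemma~\ref{lemma-4.1.1} linked by a duality step --- but the argument has a genuine scaling gap.

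\medskip
\textbf{The main gap.} Your claimed intermediate bound
\[
\int_{D(x_0,3r)}|u_\varep-u_0|\,dx\le C_p\,\varep\,r^{d+1-\frac{d}{p}}\ln[\varep^{-1}r+2]\,\|f\|_{L^p}
\]
does \emph{not} follow from the $H^{1/2}$ estimate (\ref{H-0.5-estimate}). From $\|w_\varep\|_{H^{1/2}(\Omega)}\le C\varep\|f\|_{L^2}$ one gets, via Sobolev embedding into $L^{2d/(d-1)}$ and H\"older on the ball, only
\[
\|w_\varep\|_{L^1(D(x_0,3r))}\le C\,r^{(d+1)/2}\,\varep\,\|f\|_{L^2}\le C\,\varep\,r^{\,d+\frac12-\frac{d}{p}}\,\|f\|_{L^p},
\]
which is short of your claim by a factor $r^{1/2}$. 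Propagating this loss through both applications of Lemma~\ref{lemma-4.1.1} yields $|N_\varep-N_0|\le C\varep\,|x-y|^{\frac12-d}$ rather than (\ref{estimate-4.1.2}). The paper recovers the missing $r^{1/2}$ by splitting $w_\varep=\theta_\varep+z_\varep+\rho$ and treating the pieces differently: $z_\varep$ sits in $H^1$ (embedding exponent $2d/(d-2)$ gains a full power $r$), while for $\theta_\varep$ one first obtains $\|\theta_\varep\|_{H^{1/2}(\Omega)}\le C\varep\|\nabla u_0\|_{L^2(\partial\Omega)}$ and then invokes the trace estimate of Lemma~\ref{lemma-4.1.2},
\[
\|\nabla u_0\|_{L^2(\partial\Omega)}\le C\,r^{1/2}\|g\|_{L^2(\Omega)},
\]
which exploits that $g$ is supported in a ball of radius $r$. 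This second $r^{1/2}$ is the ingredient your sketch lacks.

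\medskip
\textbf{A secondary point.} The restriction $\int_\Omega f=0$ is unnecessary and creates the constant $c^*$ that you do not convincingly control (the pointwise bound $|v_\varep-v_0|\le C|x_0-y|^{2-d}$ only gives $|c^*|\le Cr^{2-d}$, not the required $C\varep r^{1-d}$, and the boundary normalization $\int_{\partial\Omega}(v_\varep-v_0)=0$ lives on the wrong set). The paper avoids this entirely: for arbitrary $g\in C_0^\infty(D(y_0,r))$ the functions $u_\varep(x)=\int N_\varep(x,y)g(y)\,dy$ automatically satisfy $\partial u_\varep/\partial\nu_\varep=\partial u_0/\partial\nu_0=-|\partial\Omega|^{-1}\int g$, so Lemma~\ref{lemma-4.1.1} applies directly and duality yields the full $L^{p'}$ bound on $N_\varep(x_0,\cdot)-N_0(x_0,\cdot)$ with no floating constant.
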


\begin{proof}
By rescaling we may assume that diam$(\Omega)= 1$.
Fix $x_0,y_0\in \Omega$ and let $r=|x_0-y_0|/8$. 
Since $|N_\varep (x_0,y_0)|\le Cr^{2-d}$, we may assume that
$\varep<r$.
For $g\in C_0^\infty(D(y_0,r))$ and $\varep\ge 0$,
let
$
u_\varep (x)=\int_\Omega N_\varep (x,y) g(y)\, dy.
$
Then $\mathcal{L}_\varep (u_\varep)=g$ in $\Omega$,
$\frac{\partial u_\varep}{\partial\nu_\varep}
=-\frac{1}{|\partial\Omega|}\int_\Omega g$ on $\partial\Omega$
and $\int_{\partial\Omega} u_\varep =0$.
It follows that $\mathcal{L}_\varep (u_\varep)=\mathcal{L}_0 (u_0)$
in $\Omega$ and $\frac{\partial u_\varep}{\partial\nu_\varep}=
\frac{\partial u_0}{\partial\nu_0}$ on $\partial\Omega$.
Let
$
w_\varep =u_\varep (x) -u_0(x) -\varep \chi_j^\beta (x/\varep)
\frac{\partial u_0^\beta}{\partial x_j}.
$
As in the proof of Lemma \ref{lemma-4.1.1},
$$
\left(\mathcal{L}(w_\varep) \right)^\alpha =\varep 
\frac{\partial}{\partial x_i} \left\{ b_{ijk}^{\alpha\beta} (x/\varep)
\frac{\partial^2 u_0^\beta}{\partial x_j\partial x_k}\right\} \qquad \text{ in }\Omega
$$
and
$$
\left(\frac{\partial w_\varep}{\partial \nu_\varep}\right)^\alpha
=\frac{\varep}{2}
\left( n_i\frac{\partial}{\partial x_j} -n_j\frac{\partial}{\partial x_i}\right)
\left\{ F_{jik}^{\alpha\gamma}(x/\varep) \frac{\partial u_0^\gamma}
{\partial x_k}\right\}
-\varep n_i b_{ijk}^{\alpha\beta} (x/\varep)
\frac{\partial^2 u_0^\beta}{\partial x_j\partial x_k}
$$
on $\partial\Omega$.
Now, write $w_\varep =\theta_\varep +z_\varep +\rho$, where
$\mathcal{L}_\varep (z_\varep)=\mathcal{L}_\varep (w_\varep)$ in $\Omega$,
$\int_\Omega z_\varep=0$,
$$
\frac{\partial z_\varep}{\partial\nu_\varep}
=-\varep n_i b_{ijk}^{\alpha\beta} (x/\varep)
\frac{\partial^2 u_0^\beta}{\partial x_j\partial x_k}
\qquad \text{ on } \partial\Omega,
$$
and $\rho=\frac{1}{|\partial\Omega|} \int_{\partial\Omega} \big\{ w_\varep -z_\varep\big\}$ 
is a constant.
Note that $\|\nabla z_\varep\|_{L^2(\Omega)} \le C \varep \| \nabla^2 u_0\|_{L^2(\Omega)}
\le C\varep \| g\|_{L^2(\Omega)}$.
Since $\int_\Omega z_\varep =0$, by the Poincar\'e inequality, we obtain
$\| z_\varep\|_{L^p(\Omega)} \le C\varep \| g\|_{L^2(\Omega)}$,
where $p=\frac{2d}{d-2}$.
It follows by H\"older's inequality that
\begin{equation}\label{4.1.20}
\| z_\varep\|_{L^2(D(x_0,r))}
\le Cr^{\frac{d}{2}-\frac{d}{p}} \| z_\varep\|_{L^p(D(x_0, r))}
\le C\varep r\| g\|_{L^2(\Omega)}.
\end{equation}

Next, to estimate $\theta_\varep$, we observe that $\mathcal{L}_\varep (\theta_\varep)
=0$ in $\Omega$, $\int_{\partial\Omega} \theta_\varep =0$ and
$$
\frac{\partial \theta_\varep}{\partial \nu_\varep}
=\frac{\varep}{2}
\left( n_i\frac{\partial}{\partial x_j} -n_j\frac{\partial}{\partial x_i}\right)
\left\{ F_{jik}^{\alpha\gamma}(x/\varep) \frac{\partial u_0^\gamma}
{\partial x_k}\right\}.
$$
Using a duality argument and $L^2$ estimates for the Neumann problem in \cite{Kenig-Shen-1},
we may deduce that $\|\theta_\varep \|_{L^2(\partial\Omega)}
\le C\varep \| \nabla u_0\|_{L^2(\partial\Omega)}$ 
(see the proof of Theorem 5.2 in \cite{KLS2}).
By the square function estimate for the $L^2$ Dirichlet problem \cite{KLS2},
this implies that
$$
\|\theta_\varep\|_{H^{1/2}(\Omega)} \le C\| \theta_\varep\|_{L^2(\partial\Omega)}
\le C\varep \| \nabla u_0\|_{L^2(\partial\Omega)}.
$$
It follows from the Sobolev imbedding that $\|\theta_\varep\|_{L^{p_1}(\Omega)}
\le C\varep \| \nabla u_0\|_{L^2(\partial\Omega)}$, where $p_1=\frac{2d}{d-1}$. 
By H\"older's inequality, this
gives
\begin{equation}\label{4.1.21}
\| \theta_\varep\|_{L^2(D(x_0,r))}
\le C\varep r^{\frac12}\|\nabla u_0\|_{L^2(\partial\Omega)}.
\end{equation}
Since 
$$
|\nabla u_0 (x)|\le C \int_\Omega \frac{|g(y)|\, dy}{|x-y|^{d-1}},
$$
we may invoke Lemma \ref{lemma-4.1.2} to claim  that $\|\nabla u_0\|_{L^2(\Omega)}
\le C r^{1/2}\| g\|_{L^2(\Omega)}$. In view of (\ref{4.1.21})
we obtain $\| \theta_\varep\|_{L^2(D(x_0,r))} \le C\varep r \| g\|_{L^2(\Omega)}$.
This, together with (\ref{4.1.20}) and the observation
$$
|\rho|\le C \int_{\partial\Omega} 
\big\{ \varep |\nabla u_0| +|z_\varep|\big\} d\sigma \le C \varep \| g\|_{L^2(\Omega)},
$$
gives $\| w_\varep \|_{L^2(D(x_0,r))} \le C\varep r \| g\|_{L^2(\Omega)}$.
It follows that
$$
\left\{\frac{1}{r^d} \int_{D(x_0,r)} |u_\varep -u_0|^2\right\}^{1/2}
\le C \varep r^{\frac{2-d}{2}} \| g\|_{L^2(\Omega)}.
$$
Since $\|\nabla u_0\|_{L^\infty (D(x_0,r))}
\le C r^{\frac{2-d}{2}} \| g\|_{L^2(\Omega)}$
and $\|\nabla^2 u_0\|_{L^p (\Omega)} \le C \| g\|_{L^p(\Omega)}$,
by Lemma \ref{lemma-4.1.1}, we obtain
$$
|u_\varep (x_0)-u_0 (x_0)|\le C_p\, \varep r^{1-\frac{d}{p}} \ln [\varep^{-1} r+2]
\| g\|_{L^p(\Omega)},
$$
where $p>d$.
By duality this gives
$$
\left\{\frac{1}{r^d}\int_{D(y_0,r)}
|N_\varep (x_0,y)-N_0(x_0,y)|^{p^\prime}\, dy\right\}^{1/p^\prime}
\le C_p\,  \varep r^{1-d} \ln [\varep^{-1} r+2].
$$

Finally, since 
$$ 
\frac{\partial}{\partial \nu_\varep (y)}
\big\{ N_\varep (x,y)\big\}
=\frac{\partial}{\partial \nu_0 (y)}
\big\{ N_0 (x,y)\big\}=-\frac{1}{|\partial\Omega|} \qquad \text{ on }\partial\Omega,
$$
$|\nabla_y N_0 (x,y)|\le C|x-y|^{1-d}$ and
$\| \nabla^2_y N_0(x_0, y)\|_{L^p(D(y_0,r))}
\le Cr^{\frac{d}{p}-d}$,
we may invoke Lemma \ref{lemma-4.1.1} again to obtain
$$
\aligned
|N_\varep (x_0, y_0)-N_0 (x_0,y_0)|
& \le \frac{C}{r^d}\int_{D(y_0,r)}
|N_\varep (x_0,y)-N_0(x_0,y)|\, dy\\
&\qquad\qquad
+C\varep r^{1-d} \ln [\varep^{-1} r+2]\\
&\le 
C\varep r^{1-d} \ln [\varep^{-1} r+2].
\endaligned
$$
This completes the proof.
\end{proof}

\begin{remark}
{\rm
It is not clear whether the logarithmic factor in (\ref{estimate-4.1.2}) is
necessary. Also,
in view of Theorem \ref{theorem-3.1.1} on Green's functions, it would be interesting to show
that the estimate (\ref{estimate-4.1.2}) holds for scalar equations
with no smoothness condition on the coefficients.
}
\end{remark}

As a corollary of Theorem \ref{theorem-4.1.2}, we obtain an $O(\varep)$ (up to a logarithmic factor)
estimate
for $\|u_\varep -u_0\|_{L^q(\Omega)}$.

\begin{thm}\label{theorem-4.1.3}
Suppose that $A(y)$ and $\Omega$ satisfy the same assumptions 
as in Theorem \ref{theorem-4.1.2}. Let $1<p<\infty$.
For $\varep\ge 0$ and $F\in L^p(\Omega)$ with $\int_\Omega F=0$, 
let $u_\varep\in W^{1,p}(\Omega)$ be the solution to
the Neumann problem:
$\mathcal{L}_\varep (u_\varep)=F$ in $\Omega$, $\frac{\partial u_\varep}{\partial\nu_\varep}
=0$ and $\int_{\partial \Omega} u_\varep =0$.
Then
\begin{equation}\label{estimate-4.1.3}
\| u_\varep -u_0\|_{L^q (\Omega)}
\le C \varep \ln [\varep^{-1} M +2] \| F\|_{L^p(\Omega)}
\end{equation}
holds if $1<p<d$ and $\frac{1}{q}=\frac{1}{p}-\frac{1}{d}$, or $p>d$ and $q=\infty$,
where $M=\text{\rm diam} (\Omega)$.
Moreover,
\begin{equation}\label{estimate-4.1.3.1}
 \| u_\varep -u_0\|_{L^\infty (\Omega)}
\le C \varep \big[\ln (\varep^{-1} M +2)\big]^{{2-\frac{1}{d}}}
 \| F\|_{L^d(\Omega)}.
\end{equation}
\end{thm}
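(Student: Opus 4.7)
The plan is to combine the Neumann representation formula with the pointwise estimate (\ref{estimate-4.1.2}) from Theorem \ref{theorem-4.1.2}, and then invoke classical estimates for fractional integrals. Since $\frac{\partial u_\varep}{\partial\nu_\varep}=0$ on $\partial\Omega$ and $\int_{\partial\Omega} u_\varep=0$, the formula (\ref{Neumann-representation}) gives
$$
u_\varep(x)-u_0(x)=\int_\Omega \bigl[N_\varep(x,y)-N_0(x,y)\bigr] F(y)\, dy,
$$
and Theorem \ref{theorem-4.1.2} applies pointwise. Note $|x-y|\le M$, so $\ln[\varep^{-1}|x-y|+2]\le C\ln[\varep^{-1}M+2]$.

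For the first assertion (\ref{estimate-4.1.3}), I would simply bound
$$
|u_\varep(x)-u_0(x)|\le C\varep\,\ln[\varep^{-1}M+2]\int_\Omega\frac{|F(y)|}{|x-y|^{d-1}}\,dy.
$$
Hardy--Littlewood--Sobolev yields the $L^p\to L^q$ estimate with $\frac{1}{q}=\frac{1}{p}-\frac{1}{d}$ when $1<p<d$, and for $p>d$ H\"older's inequality gives a uniform $L^\infty$ bound on the right-hand side (since $(d-1)p'<d$). This produces exactly (\ref{estimate-4.1.3}).

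For (\ref{estimate-4.1.3.1}), the borderline $p=d$ case, the above argument loses a logarithm, so I would instead use the sharper splitting used already in Theorem \ref{theorem-3.1.2}. On $|x-y|<\varep$ I control $|N_\varep-N_0|$ by the size bound $|N_\varep|+|N_0|\le C|x-y|^{2-d}$; on $|x-y|\ge\varep$ I use (\ref{estimate-4.1.2}) with the logarithmic factor replaced by $\ln[\varep^{-1}M+2]$. Thus
$$
|u_\varep(x)-u_0(x)|\le C\int_{D(x,\varep)}\frac{|F(y)|}{|x-y|^{d-2}}\,dy
+C\varep\ln[\varep^{-1}M+2]\int_{\Omega\setminus D(x,\varep)}\frac{|F(y)|}{|x-y|^{d-1}}\,dy.
$$
For each term I apply H\"older with exponent $d$ on $F$ and $d/(d-1)$ on the kernel. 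The first integral becomes $\|F\|_{L^d}\cdot(\int_0^\varep r^{1/(d-1)}\,dr)^{(d-1)/d}=O(\varep)\|F\|_{L^d}$, and the second contributes $(\int_\varep^M r^{-1}\,dr)^{(d-1)/d}=O\bigl(\{\ln[\varep^{-1}M+2]\}^{1-1/d}\bigr)$, giving the claimed $\varep\{\ln[\varep^{-1}M+2]\}^{2-1/d}$.

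There is no real obstacle here: the main ingredients are already in place (Theorem \ref{theorem-4.1.2} and the Neumann representation), and the proof simply parallels the Green-function argument of Theorem \ref{theorem-3.1.2}. The only care needed is book-keeping the extra logarithmic factor coming from (\ref{estimate-4.1.2}) relative to (\ref{estimate-3.1.2}), which is what produces the exponent $2-1/d$ in (\ref{estimate-4.1.3.1}) in place of $1-1/d$.
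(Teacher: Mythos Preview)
Your proof is correct and follows essentially the same approach as the paper: the paper uses the Neumann representation to write $u_\varep-u_0=\int_\Omega (N_\varep-N_0)F$, applies the pointwise bound (\ref{estimate-4.1.2}), and then refers to the argument of Theorem \ref{theorem-3.1.2} for the fractional integral estimates, exactly as you do. Your handling of the borderline case $p=d$ via the near/far splitting is precisely the computation behind that reference.
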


\begin{proof}
Note that by the estimate (\ref{estimate-4.1.2}),
$$
\aligned
|u_\varep (x)-u_0(x)|
& \le \int_{\Omega} |N_\varep (x,y)-N_0 (x,y)|| F(y)|\, dy\\
& \le C \varep \ln (\varep^{-1} M+2) \int_\Omega
\frac{|F(y)| dy}{|x-y|^{d-1}}.
\endaligned
$$
The rest of the proof is the same as that of Theorem \ref{theorem-3.1.2}.
\end{proof}

\subsection{Lipschitz estimates}

In this subsection we give the proof of the estimate (\ref{Neumann-derivative}).
We also establish an $O(\varep^t)$ estimate for
$u_\varep -u_0 -\big\{ \Psi_{\varep, j}-P_j\big\} \frac{\partial u_0}{\partial x_j}$
in $W^{1,p}(\Omega)$ for any $t\in (0,1)$ and $1<p<\infty$.

\begin{lemma}\label{lemma-4.2.0}
Let $\Omega$ be a bounded $C^{1,\eta}$ domain for some $\eta\in (0,1]$
and
$$
u_\varep (x)=\text{\rm p.v.}
\int_\Omega \frac{\partial^2}{\partial x_i\partial y_j}
\big\{ N_\varep (x,y)\big\} f(y)\, dy
$$
for some $1\le i,j\le d$.
Then
\begin{equation}\label{estimate-4.2.0}
\|u_\varep\|_{L^\infty(\Omega)}
\le C\big\{ \ln [\varep^{-1}M +2] + M^\eta\big\} \| f\|_{L^\infty(\Omega)}
+ C \varep^\eta H_{\varep, \eta} (f),
\end{equation}
where $M=\text{\rm diam} (\Omega)$ and
$H_{\varep, \eta} (f) =\sup \big\{ \frac{|f(x)-f(y)|}{|x-y|^\eta}: x,y\in \Omega
\text{ and } |x-y|<\varep\big\}$.
\end{lemma}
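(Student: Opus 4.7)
The plan is a $T(1)$-style decomposition. Write
\[
u_\varep(x) = f(x)\, T_\varep(x) + I_\varep(x),
\]
where $T_\varep(x) := \text{p.v.}\int_\Omega \partial^2_{x_i y_j} N_\varep(x,y)\,dy$ and $I_\varep(x) := \int_\Omega \partial^2_{x_i y_j} N_\varep(x,y)(f(y) - f(x))\,dy$. The $T_\varep$ piece is a pure Calder\'on--Zygmund object that must be controlled by boundary behavior of $N_\varep$, while the H\"older regularity of $f$ on scale $\varep$ tames $I_\varep$. All bounds rest on the kernel estimate $|\partial^2_{x_i y_j} N_\varep(x,y)| \le C|x-y|^{-d}$ and the pointwise size $|\partial_{x_i} N_\varep(x,y)| \le C|x-y|^{1-d}$ from (\ref{Neumann-estimate-4.0.1}).

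For $I_\varep(x)$ I split at $|y-x| = \varep$. On $\Omega \cap B(x,\varep)$ the integrand is absolutely integrable (no p.v.\ needed) since $|f(y) - f(x)| \le H_{\varep,\eta}(f) |y-x|^\eta$, and $\int_{B(x,\varep)} |y-x|^{\eta-d}\,dy \le C\varep^\eta$ gives a contribution $\le C\varep^\eta H_{\varep,\eta}(f)$. On $\Omega \setminus B(x,\varep)$ I use $|f(y) - f(x)| \le 2\|f\|_\infty$ together with the kernel bound to obtain
\[
\Big|\int_{\Omega \setminus B(x,\varep)} \partial^2_{x_i y_j}N_\varep(x,y)(f(y)-f(x))\,dy\Big| \le C\|f\|_\infty \int_{\varep < |x-y| < M} \frac{dy}{|x-y|^d} \le C\|f\|_\infty \ln[\varep^{-1} M + 2].
\]

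For $T_\varep(x)$, integration by parts in $y$ on $\Omega\setminus B(x,\delta)$ (valid for $\delta < \delta(x)$) yields
\[
\int_{\Omega \setminus B(x,\delta)} \partial^2_{x_i y_j} N_\varep\,dy = \int_{\partial\Omega} n_j(y) \partial_{x_i}N_\varep(x,y)\,d\sigma(y) + \int_{\partial B(x,\delta)} \frac{x_j - y_j}{\delta}\partial_{x_i}N_\varep(x,y)\,d\sigma(y).
\]
The spherical integral is bounded by a constant independent of $\delta$ via the pointwise bound $|\partial_{x_i}N_\varep|\le C|x-y|^{1-d}$ and surface area $C\delta^{d-1}$; passing to the limit as $\delta \to 0^+$ gives $T_\varep(x) = c_\varep(x) + J_\varep(x)$, with $|c_\varep(x)| \le C$ and $J_\varep(x) := \int_{\partial\Omega} n_j(y) \partial_{x_i}N_\varep(x,y)\,d\sigma(y)$. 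The crucial step is to bound $J_\varep$ uniformly in $x$, including when $x$ is close to $\partial\Omega$. Here I would exploit the identity
\[
\int_{\partial\Omega} N_\varep(x,y)\,d\sigma(y) = 0 \qquad \text{for every } x\in\Omega,
\]
which follows from the normalization $\int_{\partial\Omega} N^*_\varep(z,w)\,d\sigma(z) = 0$ for the adjoint Neumann function together with $N^{\alpha\beta}_\varep(x,y) = N^{*\beta\alpha}_\varep(y,x)$. Since $|x-y| \ge \delta(x)>0$ on $\partial\Omega$, differentiation under the integral legitimately gives $\int_{\partial\Omega} \partial_{x_i}N_\varep(x,y)\,d\sigma(y) = 0$. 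Choosing $x_*\in\partial\Omega$ with $|x-x_*| = \delta(x)$ and subtracting $n_j(x_*)$ times this null integral,
\[
|J_\varep(x)| = \Big|\int_{\partial\Omega} [n_j(y)-n_j(x_*)] \partial_{x_i}N_\varep(x,y)\,d\sigma(y)\Big| \le C\int_{\partial\Omega}\frac{|y-x_*|^\eta}{|y-x|^{d-1}}\,d\sigma(y) \le CM^\eta,
\]
using the $C^{1,\eta}$ regularity of $\partial\Omega$ and the inequality $|y-x_*| \le |y-x| + \delta(x) \le 2|y-x|$ for $y\in\partial\Omega$, followed by a routine surface integral estimate. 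Combining the three bounds and absorbing the $\|f\|_\infty$ from $|c_\varep|$ into $\ln[\varep^{-1}M+2]$ yields the lemma.

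The main obstacle is precisely this uniform bound on $J_\varep$: a direct estimate $|J_\varep(x)| \le C\int_{\partial\Omega}|x-y|^{1-d}\,d\sigma(y)$ would produce a $\ln[M/\delta(x)]$ factor that blows up as $x \to \partial\Omega$, so recognizing the null-integral identity coming from the normalization of the adjoint Neumann kernel and subtracting against a closest boundary point is the key idea that makes the bound independent of $\delta(x)$.
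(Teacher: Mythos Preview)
Your proof is correct and follows essentially the same route as the paper: write $u_\varep(x)$ as the integral against $f(y)-f(x)$ plus $f(x)$ times $T_\varep(1)$, split the first piece at scale $\varep$, and convert the second to a boundary integral which is rendered uniformly bounded by subtracting $n_j(\hat{x})$ via the identity $\int_{\partial\Omega}\partial_{x_i} N_\varep(x,\cdot)\,d\sigma=0$. The only difference is that you explicitly track the spherical boundary term $c_\varep(x)$ arising from the integration by parts, which the paper's displayed identity appears to suppress.
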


\begin{proof}
For $x\in \Omega$, choose $\hat{x}\in \partial\Omega$
such that $|x-\hat{x}|=\text{\rm dist}(x, \partial\Omega)$.
Note that
$$
\aligned
u_\varep (x)
= &\int_\Omega 
\frac{\partial^2}{\partial x_i\partial y_j}
\big\{ N_\varep (x,y)\big\}  \cdot \{  f(y)- f(x)\} \, dy\\
 &\quad +
\int_{\partial\Omega}
\big\{ n_j(y)-n_j (\hat{x})\big\} \cdot \frac{\partial}{\partial x_i}
\big\{ N_\varep (x,y) \big\} \cdot f(x)\, d\sigma (y),
\endaligned
$$
where we have used the fact $\int_{\partial\Omega} N_\varep (x,y)\, d\sigma (y)=0$.
This, together with the estimates  $|\nabla_x N_\varep (x,y)|\le C |x-y|^{1-d}$
and $|\nabla_x\nabla_y N_\varep (x,y)|\le C |x-y|^{-d}$, gives
$$
\aligned
\| u_\varep \|_{L^\infty(\Omega)}
&\le C \int_\Omega \frac{|f(y)-f(x)|}{|x-y|^d}\, dy
+C \| f\|_{L^\infty(\Omega)}
\int_{\partial\Omega} \frac{d\sigma(y)}{|y-\hat{x}|^{d-1-\eta}}\\
&\le C\| f\|_{L^\infty(\Omega)}
\int_{\Omega\setminus B(x, \varep)} \frac{dy}{|x-y|^d}
+CH_{\varep, \eta} (f) \int_{|y-x|<\varep} 
\frac{dy}{|y-x|^{d-\eta}}
+ C\| f\|_{L^\infty(\Omega)} M^\eta\\
&\le C \| f\|_{L^\infty(\Omega)} \ln [\varep^{-1}M+2]
+C \varep^\eta H_{\varep, \eta} (f) 
+ C\| f\|_{L^\infty(\Omega)} M^\eta.
\endaligned
$$
This completes the proof.
\end{proof}

The following lemma provides a Lipschitz estimate
for local solutions with Neumann boundary conditions.

\begin{lemma}\label{lemma-4.2.1}
Let $\Omega$ be a bounded $C^{2,\eta}$ domain for some $\eta\in (0,1)$.
Suppose that $u_\varep \in H^1(D_{3r})$, $u_0\in C^{2, \eta}(D_{3r})$ and
$\mathcal{L}_\varep (u_\varep) =\mathcal{L}_0(u_0)$ in $D_{3r}$.
Also assume that $\frac{\partial u_\varep}{\partial \nu_\varep}
=\frac{\partial u_0}{\partial \nu_0}$ on $\Delta_{3r}$.
Then, if $0<\varep<(r/2)$,
\begin{equation}
\aligned
& \|\frac{\partial u_\varep^\alpha}{\partial x_i}
-\frac{\partial}{\partial x_i}
\big\{ \Psi_{\varep, j}^{\alpha\beta}\big\} 
\cdot \frac{\partial u_0^\beta}{\partial x_j}\|_{L^\infty(D_r)}\\
&\le \frac{C}{r^{d+1}}
\int_{D_{3r}} |u_\varep -u_0|\, dx
+C\varep r^{-1}\ln [\varep^{-1}M +2] \|\nabla u_0\|_{L^\infty(D_{3r})}\\
&\qquad +C\varep^{1-\rho} r^\rho \ln [\varep^{-1}M+2] \|\nabla^2 u_0\|_{L^\infty(D_{3r})}
+C\varep r^\rho \ln [\varep^{-1}M +2] \|\nabla^2 u_0\|_{C^{0, \rho}(D_{3r})}
\endaligned 
\end{equation}
for any $0<\rho<\min (\eta, \tau)$,
where $(\Psi_{\varep, j}^\beta)$ denotes the matrix of Neumann correctors for
$\mathcal{L}_\varep$ in $\Omega$ and $M=\text{\rm diam}(\Omega)$.
\end{lemma}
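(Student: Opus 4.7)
The plan is to mimic the proof of Lemma \ref{lemma-3.2.1}, with three substitutions dictated by the Neumann setting: the Dirichlet correctors $\Phi_{\varep,j}^\beta$ are replaced by the Neumann correctors $\Psi_{\varep,j}^\beta$; the Green function $\widetilde{G}_\varep$ on an auxiliary $C^{2,\eta}$ domain $\widetilde{D}$ (with $D_{2r}\subset\widetilde{D}\subset D_{3r}$) is replaced by the Neumann function $\widetilde{N}_\varep$; and the Dirichlet trace of $w_\varep$ is replaced by the conormal derivative, computed via Proposition \ref{prop-2.4}. Setting $w_\varep=u_\varep-u_0-\{\Psi_{\varep,j}^\beta-P_j^\beta\}\partial u_0^\beta/\partial x_j$, Proposition \ref{prop-2.1} (with $V_{\varep,j}^\beta=\Psi_{\varep,j}^\beta$) decomposes $\mathcal{L}_\varep(w_\varep)$ into a divergence-form piece $\partial_i f_i$ (gathering the $\varep F(x/\varep)\partial^2 u_0$ and $a(x/\varep)(\Psi_\varep-P)\partial^2 u_0$ contributions) plus a pointwise piece involving $\partial_j\{\Psi_{\varep,k}-P_k-\varep\chi_k(x/\varep)\}\,\partial^2 u_0$. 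Proposition \ref{prop-2.4} yields $(\partial w_\varep/\partial\nu_\varep)^\alpha=-n_i a_{ij}^{\alpha\beta}(x/\varep)(\Psi_{\varep,k}^{\beta\gamma}-P_k^{\beta\gamma})\,\partial^2_{jk}u_0^\gamma$ on $\Delta_{3r}$, whose modulus is at most $C\varep\ln[\varep^{-1}M+2]\|\nabla^2 u_0\|_{L^\infty(D_{3r})}$ by Proposition \ref{prop-2.3}.

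I then split $w_\varep=w_\varep^{(1)}+w_\varep^{(2)}$ on $\widetilde{D}$: $w_\varep^{(1)}$ is constructed via the Neumann function representation in $\widetilde{D}$ so as to absorb both the interior source and the conormal data on $\partial\widetilde{D}\cap\partial\Omega$, while $w_\varep^{(2)}$ satisfies $\mathcal{L}_\varep w_\varep^{(2)}=0$ in $\widetilde{D}$ with vanishing conormal derivative on $\Delta_{2r}$. The boundary Lipschitz estimate for the Neumann problem proved in \cite{KLS1} bounds $\|\nabla w_\varep^{(2)}\|_{L^\infty(D_r)}$ by $Cr^{-d-1}\int_{D_{2r}}|w_\varep^{(2)}|$. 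Writing $w_\varep^{(2)}=w_\varep-w_\varep^{(1)}$, invoking $\|\Psi_{\varep,j}-P_j\|_\infty\le C\varep\ln[\varep^{-1}M+2]$ from Proposition \ref{prop-2.3}, and separating out $\|\nabla w_\varep^{(1)}\|_{L^\infty(D_{2r})}$ produces the first two terms of the target right-hand side plus the remainder $C\|\nabla w_\varep^{(1)}\|_{L^\infty(D_{2r})}$.

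To estimate $\nabla w_\varep^{(1)}$ I follow the scheme of Lemma \ref{lemma-3.2.1}: integration by parts against $\widetilde{N}_\varep$, the cancellation trick $f_i(y)\mapsto f_i(y)-f_i(x)$, and the bounds $|\nabla_x\nabla_y\widetilde{N}_\varep(x,y)|\le C|x-y|^{-d}$, $|\nabla_x\widetilde{N}_\varep(x,y)|\le C|x-y|^{1-d}$ from \eqref{Neumann-estimate-4.0.1}. The pointwise interior piece of $\mathcal{L}_\varep w_\varep$ is controlled via $|\nabla\{\Psi_{\varep,k}-P_k-\varep\chi_k(\cdot/\varep)\}|\le C\min(1,\varep/\delta(x))$ from Proposition \ref{prop-2.3}, splitting the integral at distance $\varep$ from the pole. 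Combined with $\|F\|_{C^1(Y)}+\|\chi\|_{C^1(Y)}\le C$ (Remark \ref{remark-2.1}), this yields the $\varep\ln[\varep^{-1}M+2]\|\nabla^2 u_0\|_{L^\infty}$, $\varep^{1-\rho}r^\rho\ln[\varep^{-1}M+2]\|\nabla^2 u_0\|_{L^\infty}$, and $\varep r^\rho\ln[\varep^{-1}M+2]\|\nabla^2 u_0\|_{C^{0,\rho}}$ terms, where the $\varep^{-\rho}$ Hölder scaling of $F(x/\varep)$ and $a(x/\varep)$ enters as usual. The boundary contribution on $\Delta_{3r}$ coming from the conormal formula is handled by the same type of kernel estimates, as in Lemma \ref{lemma-4.1.1}.

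The main obstacle is the careful bookkeeping of the logarithmic factor $\ln[\varep^{-1}M+2]$, which now enters at every stage through the weaker Neumann-corrector bound of Proposition \ref{prop-2.3} (contrast with the clean $O(\varep)$ estimate in Proposition \ref{prop-2.2}) and through the boundary integral of $\widetilde{N}_\varep$ against the conormal data on $\Delta_{3r}$; one must verify that every such logarithm can be absorbed into the four terms on the right of the target inequality, with the prescribed powers of $\varep$ and $r$, without incurring further losses.
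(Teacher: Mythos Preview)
Your overall strategy is right and matches the paper's, but your splitting $w_\varep=w_\varep^{(1)}+w_\varep^{(2)}$ is not the one the paper uses, and the difference is exactly where the $\varep^{1-\rho}$ term comes from.

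In the paper, after integrating by parts in the Neumann representation on $\widetilde D$, the boundary term coming from the $a_{ij}(x/\varep)(\Psi_{\varep,k}-P_k)\partial^2 u_0$ divergence piece \emph{exactly cancels} the conormal contribution $-n_i a_{ij}(x/\varep)(\Psi_{\varep,k}-P_k)\partial^2 u_0$ supplied by Proposition~\ref{prop-2.4}. What remains on $\partial\widetilde D$ is only $\varep\, n_i F_{jik}(x/\varep)\partial^2 u_0$ together with $\partial u_\varep/\partial\nu_\varep-\partial u_0/\partial\nu_0$, and \emph{this} is what defines $w_\varep^{(2)}$. Consequently $w_\varep^{(2)}$ does \emph{not} have vanishing conormal on $\Delta_{2r}$; its conormal there equals $\varep\, n_i F_{jik}(x/\varep)\partial^2 u_0$ minus a constant. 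The boundary Lipschitz estimate from \cite{KLS1} is then applied with this nonzero H\"older data, and since the $C^{0,\rho}$ seminorm of $\varep F_{jik}(x/\varep)\partial^2 u_0$ is of order $\varep^{1-\rho}\|\nabla^2 u_0\|_\infty+\varep\|\nabla^2 u_0\|_{C^{0,\rho}}$, this is precisely the source of the $\varep^{1-\rho}$ term in the statement. Your claim that $w_\varep^{(2)}$ has zero conormal, with the Lipschitz estimate reducing to $Cr^{-d-1}\int|w_\varep^{(2)}|$, forces this boundary piece into $w_\varep^{(1)}$; your appeal to ``kernel estimates, as in Lemma~\ref{lemma-4.1.1}'' does not immediately apply, because Lemma~\ref{lemma-4.1.1} bounds only the $L^\infty$ norm (not the gradient) of such a layer potential, and it relies on the tangential structure $(n_i\partial_j-n_j\partial_i)$ which your conormal data lacks.

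A second point you gloss over: the cancellation trick $f_i(y)\mapsto f_i(y)-f_i(x)$ does not transplant verbatim from Lemma~\ref{lemma-3.2.1}. In the Dirichlet case $\int_{\widetilde D}\partial_{y_i}\widetilde G_\varep(x,y)\,dy=0$ because $\widetilde G_\varep$ vanishes on $\partial\widetilde D$; for $\widetilde N_\varep$ this integral equals $\int_{\partial\widetilde D} n_i(y)\widetilde N_\varep(x,y)\,d\sigma$, which is not zero. The paper isolates this issue as Lemma~\ref{lemma-4.2.0}: one uses $\int_{\partial\widetilde D}\widetilde N_\varep(x,\cdot)\,d\sigma=0$ to replace $n_i(y)$ by $n_i(y)-n_i(\hat x)$ and then exploits the $C^{1,\eta}$ regularity of $\partial\widetilde D$. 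That lemma (applied on $\widetilde D$) is what controls the first two interior integrals of $w_\varep^{(1)}$; without it the $f(y)-f(x)$ subtraction alone is not enough to close the estimate.
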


\begin{proof}
By rescaling and translation we may assume that $r=1$ and $0\in D_1$.
Let 
$$
w_\varep =u_\varep (x)-u_0(x)- \big\{ \Psi_{\varep, j}^\beta -P_j^\beta\big\} \cdot 
\frac{\partial u_0^\beta}{\partial x_j}.
$$ 
Choose a $C^{2, \eta}$ domain such that $D_{2}\subset \wD\subset D_{3}$.
 We now write
$$
w_\varep (x)
=\int_{\widetilde{D}} \wN_\varep (x,y) \mathcal{L}_\varep (w_\varep)\, dy
+\int_{\partial\widetilde{D}} \wN_\varep (x,y) \frac{\partial w_\varep}{\partial\nu_\varep}\, 
d\sigma(y)
+\frac{1}{|\partial\widetilde{D}|}\int_{\partial\widetilde{D}} w_\varep
$$
for $x\in D_2$,
where $\wN_\varep (x,y)$ denotes the matrix of Neumann functions
 for $\mathcal{L}_\varep$ in $\wD$.
In view of Propositions \ref{prop-2.1} and \ref{prop-2.4}, we have
$w_\varep =w_\varep^{(1)} +w_\varep^{(2)}+c$, 
where $c=\frac{1}{|\partial\widetilde{D}|}\int_{\partial\widetilde{D}} w_\varep$,
\begin{equation}\label{4.2.10}
\aligned
w_\varep^{(1)} (x)
=& -\varep \int_{\wD} \frac{\partial}{\partial y_i}
\big\{ \wN_\varep (x,y)\big\} \cdot
\big\{ F_{jik}(y/\varep)\big\} \cdot
\frac{\partial^2 u_0}{\partial y_j\partial y_k}\, dy\\
&-\int_{\wD} \frac{\partial}{\partial y_i}
\big\{ \wN_\varep (x,y)\big\} \cdot
a_{ij}(y/\varep) \big\{ \Psi_{\varep, k}(y) -P_k(y) \big\}
\cdot \frac{\partial^2 u_0}{\partial y_j\partial y_k}\, dy\\
&+\int_{\wD} \wN_{\varep} (x,y) \cdot a_{ij}(y/\varep)
\frac{\partial}{\partial y_j} \big\{ \Psi_{\varep, k}(y) -P_k (y)
 -\varep \chi_k(y/\varep)\big\}
\cdot \frac{\partial^2 u_0}{\partial y_i\partial y_k}\, dy
\endaligned
\end{equation}
and
\begin{equation}\label{4.2.11}
\aligned
w^{(2)}_\varep (x)= &
\varep \int_{\partial\wD}
\wN_\varep (x,y) \cdot n_i (y)  F_{jik} (y/\varep) \cdot 
\frac{\partial^2 u_0}{\partial y_j\partial y_k}\, d\sigma(y)\\
&
\qquad +\int_{\partial\wD}\wN_\varep (x,y) \cdot \left\{ \frac{\partial u_\varep}
{\partial \nu_\varep} -\frac{\partial u_0}{\partial\nu_0}\right\}\, d\sigma(y)
\endaligned
\end{equation}
(we have supressed all superscripts for notational simplicity).

To estimate $w_\varep^{(2)}$ in $D_1$, we note that
$\mathcal{L}_\varep (w_\varep^{(2)})=0$ in $\wD$ and
$$
\frac{\partial} {\partial \nu_\varep} \big\{ w_\varep^{(2)}\big\}
=\varep n_i F_{jik} (x/\varep) \frac{\partial^2 u_0}{\partial x_j\partial x_k}
-\frac{\varep}{|\partial \wD|}\int_{\partial\wD}
n_i F_{jik}(x/\varep) \frac{\partial^2 u_0}{\partial x_j\partial x_k}\, d\sigma
\qquad \text{ on } \Delta_{2},
$$
where we have used $\frac{\partial u_\varep}{\partial\nu_\varep}
=\frac{\partial u_0}{\partial\nu_0}$ on $\Delta_{2}$.
Since $\|\frac{\partial} {\partial \nu_\varep} 
\big\{ w_\varep^{(2)}\big\}\|_{L^\infty(\Delta_{2})}
\le C \varep \| \nabla^2 u_0\|_{L^\infty(D_{3})}$ and
$$
\| \frac{\partial} {\partial \nu_\varep} \big\{ w_\varep^{(2)}\big\}
\|_{C^{0,\rho} (\Delta_{2})}
\le C \varep^{1-\rho} \|\nabla^2 u_0\|_{L^\infty(D_{2})}
+C \varep \|\nabla^2 u_0\|_{C^{0, \rho}(D_{2})},
$$
it follows from the boundary Lipschitz estimate in \cite[Theorem 7.1]{KLS1} that
$$
\|\nabla w_\varep^{(2)}\|_{L^\infty(D_1)}
\le C \varep^{1-\rho} \|\nabla^2 u_0\|_{L^\infty(D_2)}
+C \varep \|\nabla^2 u_0\|_{C^{0, \rho}(D_2)}
+C\int_{D_2} |w_\varep^{(2)}-c|
$$
for any constant $c$. This leads to
$$
\aligned
\|\nabla w_\varep \|_{L^\infty(D_1)}
&\le \|\nabla w_\varep^{(1)}\|_{L^\infty(D_1)}
+ \|\nabla w_\varep^{(2)}\|_{L^\infty(D_1)}\\
& \le C\int_{D_2} |w_\varep|\, dx
+C \|\nabla w_\varep^{(1)} \|_{L^\infty(D_2)}
+C \varep^{1-\rho} \|\nabla^2 u_0\|_{L^\infty(D_3)}\\
&\qquad\qquad
+C\varep \|\nabla^2 u_0\|_{C^{0,\rho}(D_3)}.
\endaligned
$$
Since $|\Psi_{\varep, j}^\beta -P_j^\beta|\le C \varep \ln [\varep^{-1}M +2]$ by Proposition 
\ref{prop-2.4},
we obtain
\begin{equation}\label{4.2.11-5}
\aligned
& \|\frac{\partial u_\varep^\alpha}{\partial x_i}
-\frac{\partial}{\partial x_i}
\big\{ \Psi_{\varep, j}^{\alpha\beta}\big\} 
\cdot \frac{\partial u_0^\beta}{\partial x_j}\|_{L^\infty(D_1)}\\
&\le C \int_{D_2} |u_\varep -u_0|
+C \varep \ln [\varep^{-1}M+2] \| \nabla u_0\|_{L^\infty(D_3)}
+C \varep \ln [\varep^{-1}M+2] \| \nabla^2 u_0\|_{L^\infty(D_3)}\\
& \qquad
+C \varep^{1-\rho} \|\nabla^2 u_0\|_{L^\infty(D_3)}
+C\varep \|\nabla^2 u_0\|_{C^{0,\rho}(D_3)}
+ C\|\nabla w_\varep^{(1)}\|_{L^\infty(D_2)}.
\endaligned
\end{equation}

It remains to estimate $\nabla w_\varep^{(1)}$ on $D_2$.
The first two integrals in the right hand side of (\ref{4.2.10}) may be handled
by applying Lemma \ref{lemma-4.2.0} on $\wD$. Indeed, let
$$
f(x)=-\varep F_{jik} (x/\varep)\cdot \frac{\partial^2 u_0}{\partial x_j\partial x_k}
-a_{ij}(x/\varep)
\big\{ \Psi_{\varep, k}(x) -P_k (x)\big\}
\frac{\partial^2 u_0}{\partial x_j\partial x_k}.
$$
Note that $\| f\|_{L^\infty(\wD)}
\le C \varep \ln [\varep^{-1} M +2] \|\nabla^2 u_0\|_{L^\infty(D_3)}$ and
$$
H_{\varep, \rho} (f)
\le
C \varep^{1-\rho} \ln [\varep^{-1} M+2] \|\nabla^2 u\|_{L^\infty(D_3)}
+C \varep \ln [\varep^{-1}M+2] \|\nabla^2 u_0\|_{C^{0, \rho}(D_3)}.
$$
It follows by Lemma \ref{lemma-4.2.0} that the first two integrals
in the right hand side of (\ref{4.2.10}) are bounded by
$$
C \varep \ln [\varep^{-1}M +2]
\big\{ \varep^{-\rho} \| \nabla^2 u_0\|_{L^\infty(D_3)}
+\|\nabla^2 u_0\|_{C^{0, \rho}(D_3)}\big\}.
$$

Finally, the third integral in (\ref{4.2.10}) is bounded by
\begin{equation}\label{4.2.12}
C \|\nabla^2 u_0\|_{L^\infty(D_3)}
\int_{\wD} \frac{|\nabla_y \big\{ \Psi_{\varep, k} (y)-P_y (y) -\varep \chi_k (y/\varep)\big\}|}
{|x-y|^{d-1}}\, dy.
\end{equation}
Using that $|\nabla_y \{ \Psi_{\varep, k} (y) -P_k (y) -\varep \chi_k (y/\varep)\}|
\le C \min \big(1, \varep [\text{dist}(y, \partial\Omega)]^{-1} \big)$,
one may show that the integral in (\ref{4.2.12}) is bounded by
$C \varep \big[ \ln (\varep^{-1} +2)\big]^2$.
Thus, we have proved that
$$
\|w_\varep^{(1)}\|_{L^\infty(D_2)}
\le C \varep^{1-\rho} \ln [\varep^{-1}M+2] \|\nabla^2 u_0\|_{L^\infty(D_3)}
+C \varep \ln [\varep^{-1} M +2] \|\nabla^2 u_0\|_{C^{0, \rho}(D_3)}.
$$
This, together with (\ref{4.2.11-5}), yields the desired estimate.
\end{proof}

We are now in a position to give the proof of estimate
(\ref{Neumann-derivative}).

\begin{thm}\label{theorem-4.2.3}
Suppose that $A(y)$ satisfies conditions (\ref{ellipticity}),
(\ref{periodicity}) and (\ref{smoothness}).
Also assume that $A^*=A$.
Let $\Omega$ be a bounded $C^{2,\eta}$ domain for some $\eta\in (0,1)$.
Then
\begin{equation}\label{estimate-4.3.2}
\big| \frac{\partial }{\partial x_i} \left\{ N^{\alpha\beta}_\varep (x,y)\right\}
-\frac{\partial }{\partial x_i}
\big\{ \Psi_{\varep, j}^{\alpha\gamma} (x)\big\}
\cdot \frac{\partial}{\partial x_j}
\left\{ N_0^{\gamma\beta}(x,y)\right\} \big|
\le \frac{C\varep^{1-\rho}\ln [\varep^{-1}M +2]}
{|x-y|^{d-\rho}}
\end{equation}
for any $x,y\in \Omega$ and $\rho\in (0,1)$,
where $C$ depends only on $d$, $m$, $\mu$, $\lambda$, $\tau$, $\rho$
and $\Omega$.
\end{thm}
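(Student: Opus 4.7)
The plan is to follow the same template used in the proof of Theorem \ref{theorem-3.2.1}, replacing Lemma \ref{lemma-3.2.1} with the Neumann-type Lipschitz estimate of Lemma \ref{lemma-4.2.1} and Theorem \ref{theorem-3.1.1} with its Neumann analog Theorem \ref{theorem-4.1.2}. Fix $x_0, y_0 \in \Omega$, set $r = |x_0-y_0|/8$, and first dispose of the easy regime $\varep \ge r/2$: there, the pointwise bounds $|\nabla_x N_\varep(x,y)| \le C|x-y|^{1-d}$, $|\nabla \Psi_\varep| \le C$ from Proposition \ref{prop-2.3}, and $|\nabla_x N_0(x,y)| \le C|x-y|^{1-d}$ control the left side by $C|x-y|^{1-d}$, which in turn is bounded by $C\varep^{1-\rho}|x-y|^{\rho-d}$ whenever $\varep \gtrsim |x-y|$.

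Assume then $\varep < r/2$, and put $u_\varep(x) = N_\varep^{\cdot\beta}(x,y_0)$, $u_0(x) = N_0^{\cdot\beta}(x,y_0)$. Since $y_0 \notin \overline{D_{3r}}$ for $D_{3r} = B(x_0, 3r)\cap \Omega$, one has $\mathcal{L}_\varep(u_\varep) = \mathcal{L}_0(u_0) = 0$ in $D_{3r}$, and the defining Neumann boundary data force $\partial u_\varep/\partial \nu_\varep = \partial u_0/\partial \nu_0 = -|\partial \Omega|^{-1} e^\beta$ on $\Delta_{3r}$. The hypotheses of Lemma \ref{lemma-4.2.1} are thus met for any fixed $\rho \in (0,\min(\eta,\lambda))$.

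To run Lemma \ref{lemma-4.2.1} I collect the necessary bounds on $D_{3r}$. Theorem \ref{theorem-4.1.2} yields $\|u_\varep - u_0\|_{L^\infty(D_{3r})} \le C\varep r^{1-d}\ln[\varep^{-1}r + 2]$. The function $u_0$ is a Neumann function for the constant-coefficient homogenized operator $\mathcal{L}_0$ on the $C^{2,\eta}$ domain $\Omega$, with pole at $y_0$ and $|x-y_0|\sim r$ on $D_{3r}$; standard interior and boundary Schauder theory for $\mathcal{L}_0$ therefore give
$$
\|\nabla u_0\|_{L^\infty(D_{3r})} \le Cr^{1-d}, \qquad \|\nabla^2 u_0\|_{L^\infty(D_{3r})} \le Cr^{-d}, \qquad \|\nabla^2 u_0\|_{C^{0,\rho}(D_{3r})} \le Cr^{-d-\rho}.
$$
Feeding these into Lemma \ref{lemma-4.2.1} and using $|D_{3r}| \le Cr^d$, the four terms on the right read $C\varep r^{-d}\ln[\varep^{-1}r+2]$, $C\varep r^{-d}\ln[\varep^{-1}M+2]$, $C\varep^{1-\rho}r^{\rho-d}\ln[\varep^{-1}M+2]$, and $C\varep r^{-d}\ln[\varep^{-1}M+2]$. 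Since $\varep<r$ gives $(r/\varep)^\rho > 1$, the third term dominates; with $r\sim |x_0-y_0|$ this is exactly (\ref{estimate-4.3.2}) at $x=x_0$. The full range $\rho\in(0,1)$ claimed in the statement is then recovered by interpolating the proven estimate at some fixed $\rho_0<\min(\eta,\lambda)$ (which is no larger than the target when $\varep \le |x-y|$) with the trivial pointwise bound $C|x-y|^{1-d}$ (which is no larger than the target when $\varep > |x-y|$).

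The only real obstacle is bookkeeping: carefully tracking the logarithmic factors and verifying that the $C\varep^{1-\rho}r^{\rho-d}$ contribution genuinely dominates the three $O(\varep/r^d)$ contributions once $\varep<r$. All the substantive analytic content has been packaged into Lemma \ref{lemma-4.2.1} and Theorem \ref{theorem-4.1.2}, whose proofs in turn rest on the uniform Neumann-function bounds from \cite{KLS1} and the representation formulas of Propositions \ref{prop-2.1} and \ref{prop-2.4}.
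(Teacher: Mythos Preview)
Your proposal is correct and follows essentially the same approach as the paper's proof: fix $x_0,y_0$, set $r=|x_0-y_0|/8$, handle $\varep\gtrsim r$ by the trivial gradient bounds, and for $\varep<r$ apply Lemma \ref{lemma-4.2.1} to $u_\varep=N_\varep(\cdot,y_0)$, $u_0=N_0(\cdot,y_0)$ using Theorem \ref{theorem-4.1.2} for the $L^\infty$ difference and Schauder bounds on $N_0$ for the derivative terms. Your explicit observation that the $\varep^{1-\rho}r^{\rho-d}$ term dominates the $\varep r^{-d}$ terms when $\varep<r$, and your reduction of the full range $\rho\in(0,1)$ to small $\rho$, match the paper's remark that one ``may assume that $\varep<|x-y|$ and $\rho$ is small.''
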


\begin{proof}
Since $|\nabla_x N_\varep (x,y)|\le C |x-y|^{1-d}$ and $|\nabla\Psi_{\varep, j}^\beta|\le C$,
we may assume that $\varep<|x-y|$ and $\rho$ is small.
Fix $x_0, y_0\in \Omega$, $1\le \gamma \le d$ and let $r=|x_0-y_0|/8$.
Let $u^\alpha_\varep(x)= N^{\alpha\gamma}_\varep (x,y_0)$ and $u_0^{\alpha} (x)
=N_0^{\alpha\gamma} (x,y_0)$.
Observe that 
$$
\left\{
\aligned
\mathcal{L}_\varep (u_\varep) & =\mathcal{L}_0 (u_0) =0 \quad \text{ in } D(x_0, r),\\
\left( \frac{\partial u_\varep}{\partial\nu_\varep}\right)^\alpha
&=\left(\frac{\partial u_0}{\partial\nu_0}\right)^\alpha
=-|\partial\Omega|^{-1}\delta^{\alpha\gamma}\quad \text{ on } \Delta(x_0,r).
\endaligned
\right.
$$ 
Also, note that $\|\nabla u_0\|_{L^\infty(D(x_0, r))} \le Cr^{1-d}$, 
$\|\nabla^2 u_0\|_{L^\infty(D(x_0,r))}\le C r^{-d}$
and $\| \nabla^2 u_0\|_{C^{0, \rho}(D(x_0,r))} \le C r^{-d-\rho}$.
Furthermore, it follows from Theorem \ref{theorem-4.1.2} that
$$
\|u_\varep -u_0\|_{L^\infty(D(x_0,r))} \le C \varep r^{1-d} \ln [\varep^{-1} r+2].
$$
Thus, by Lemma \ref{lemma-4.2.1}, we obtain
$$
\|\frac{\partial u_\varep^\alpha}{\partial x_i}
-\frac{\partial}{\partial x_i} \big\{ \Psi_{\varep, j}^{\alpha\beta}\big\} \cdot
\frac{\partial u_0^\beta}{\partial x_j}\|_{L^\infty (D(x_0, r/3))}
\le C \varep^{1-\rho} r^{\rho-d} \ln [\varep^{-1} M+2].
$$
This completes the proof.
\end{proof}

\begin{remark}\label{remark-4.2.1}
{\rm
Under the symmetry condition $A^*=A$, one has 
$N_\varep^{\alpha\beta} (x,y)=N_\varep^{\beta\alpha}(y,x)$.
Thus, it follows from (\ref{estimate-4.3.2}) that for any $x,y\in \Omega$,
\begin{equation}\label{estimate-4.2.3-1}
\big| \frac{\partial }{\partial y_j} \left\{ N^{\alpha\beta}_\varep (x,y)\right\}
-\frac{\partial }{\partial y_j}
\big\{ \Psi_{\varep, \ell}^{\beta \sigma} (y)\big\}
\cdot \frac{\partial}{\partial y_\ell}
\left\{ N_0^{\alpha \sigma}(x,y)\right\} \big|
\le \frac{C\varep^{1-\rho}\ln [\varep^{-1}M +2]}
{|x-y|^{d-\rho}}.
\end{equation}
Fix $\beta$ and $j$.
Let 
$$
u^\alpha_\varep (x) =\frac{\partial }{\partial y_j} \left\{ N^{\alpha\beta}_\varep (x,y)\right\}
\text{ and }
u_0^\alpha (x)
=\frac{\partial }{\partial y_j}
\big\{ \Psi_{\varep, k}^{\beta \sigma} (y)\big\}
\cdot \frac{\partial}{\partial y_k}
\left\{ N_0^{\alpha \sigma}(x,y)\right\}.
$$
Note that $\mathcal{L}_\varep (u_\varep)=\mathcal{L}_0 (u_0)=0
$ in $\Omega\setminus \{ y\}$
and $\frac{\partial u_\varep}{\partial\nu_\varep}
=\frac{\partial u_0}{\partial\nu_0}=0$ on $\partial\Omega$.
We may use Lemma \ref{lemma-4.2.1} and estimate (\ref{estimate-4.2.3-1})
to deduce that if $\Omega$ is $C^{3, \eta}$ for some $\eta\in (0,1)$,
\begin{equation}\label{estimate-4.2.3-2}
\aligned
\big|
\frac{\partial^2}{\partial x_i \partial y_j}
\big\{ N_\varep^{\alpha\beta} (x,y)\big\}
& -\frac{\partial}{\partial x_i}
\big\{ \Psi_{\varep, k}^{\alpha\gamma} (x) \big\}
\cdot
\frac{\partial^2 }{\partial x_k \partial y_\ell}
\big\{ N_0^{\gamma \sigma} (x,y) \big\}
\cdot
\frac{\partial}{\partial y_j}
\big\{ \Psi_{\varep, \ell}^{\beta \sigma}(y) \big\}
\big|\\
&\le 
\frac{C_\rho \,\varep^{1-\rho}\ln [\varep^{-1}M+2]}{|x-y|^{d+1-\rho}}
\endaligned
\end{equation}
for any $x,y\in \Omega$ and $\rho\in (0,1)$, 
where $C_\rho$ depends only on $d$, $m$, $\mu$, $\lambda$, $\tau$, $\rho$ and $\Omega$.
}
\end{remark}

As a corollary of Theorem \ref{theorem-4.2.3}, we obtain an $O(\varep^t)$
estimate for any $t\in (0,1)$
in $W^{1,p}(\Omega)$ for solutions with Neumann boundary conditions.

\begin{thm}\label{theorem-4.2.4}
Suppose that $A(y)$ and $\Omega$ satisfy the same conditions as in Theorem
\ref{theorem-4.2.3}. Let $1<p<\infty$.
For $\varep\ge 0$ and $F\in L^p(\Omega)$ with $\int_\Omega F=0$, let
$u_\varep\in W^{1,p}(\Omega)$ be the solution of the Neumann problem:
$\mathcal{L}_\varep (u_\varep)=F$ in $\Omega$, $\frac{\partial u_\varep}
{\partial\nu_\varep}=0$ and $\int_{\partial\Omega} u_\varep =0$.
Then
\begin{equation}\label{estimate-4.2.4}
\| u_\varep -u_0 - \big\{ \Psi^\beta_{\varep, j} -P^\beta_j\big\}
\frac{\partial u_0^\beta}{\partial x_j} \|_{W^{1,p}(\Omega)}
\le C_t\, \varep^t  \| F\|_{L^p(\Omega)}
\end{equation}
for any $t\in (0,1)$, where
$C_t$ depends only on $d$, $m$, $\mu$, $\lambda$, $\tau$, $t$, $p$
and $\Omega$.
\end{thm}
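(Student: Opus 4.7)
The plan is to deduce the $W^{1,p}$ bound on
$w_\varep := u_\varep - u_0 - \{\Psi_{\varep,j}^\beta - P_j^\beta\}\,\partial u_0^\beta/\partial x_j$
from an $L^p$ bound on the principal gradient error
$\partial u_\varep^\alpha/\partial x_i - (\partial\Psi_{\varep,j}^{\alpha\gamma}/\partial x_i)\cdot \partial u_0^\gamma/\partial x_j$, by reading Theorem \ref{theorem-4.2.3} as the kernel bound for a singular integral operator. Starting from the Neumann representation
$u_\varep(x) - |\partial\Omega|^{-1}\int_{\partial\Omega} u_\varep\,d\sigma = \int_\Omega N_\varep(x,y) F(y)\,dy$, together with the analogous formula for $u_0$, differentiation in $x_i$ yields
\[
\frac{\partial u_\varep^\alpha}{\partial x_i}(x)
- \frac{\partial}{\partial x_i}\big\{\Psi_{\varep,j}^{\alpha\gamma}(x)\big\}\cdot \frac{\partial u_0^\gamma}{\partial x_j}(x)
= \int_\Omega K_\varep^{\alpha\beta}(x,y)\, F^\beta(y)\, dy,
\]
where $K_\varep$ is exactly the quantity estimated in Theorem \ref{theorem-4.2.3}.

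The central step is a Schur-test $L^p\!\to\!L^p$ bound for the operator with kernel $K_\varep$. I would split at $|x-y|=\varep$, using the trivial bound $|K_\varep(x,y)|\le C|x-y|^{1-d}$ (from $|\nabla_x N_\varep|,|\nabla_x N_0|\le C|x-y|^{1-d}$ and $|\nabla\Psi_\varep|\le C$) on $|x-y|<\varep$, and the refined bound of Theorem \ref{theorem-4.2.3} on $|x-y|\ge\varep$. Direct computation of the two resulting radial integrals then gives
\[
\sup_{x\in\Omega}\int_\Omega |K_\varep(x,y)|\, dy + \sup_{y\in\Omega}\int_\Omega |K_\varep(x,y)|\, dx \le C_\rho\, \varep^{1-\rho}\ln\big[\varep^{-1}M+2\big]
\]
for every $\rho\in(0,1)$. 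Schur's test then bounds the operator norm by the same quantity on every $L^p(\Omega)$, $1\le p\le\infty$. Given $t\in(0,1)$, choosing $\rho=(1-t)/2$ absorbs the logarithmic factor and yields
\[
\big\|\frac{\partial u_\varep^\alpha}{\partial x_i} - \frac{\partial \Psi_{\varep,j}^{\alpha\gamma}}{\partial x_i}\cdot \frac{\partial u_0^\gamma}{\partial x_j}\big\|_{L^p(\Omega)} \le C_t\, \varep^t\, \|F\|_{L^p(\Omega)}.
\]

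To finish, decompose $\nabla w_\varep = (\nabla u_\varep - \nabla\Psi_\varep\cdot\nabla u_0) - (\Psi_\varep - P)\cdot \nabla^2 u_0$. The first bracket is controlled by the previous step; for the second, Proposition \ref{prop-2.3} gives $\|\Psi_\varep-P\|_{L^\infty}\le C\varep\ln[\varep^{-1}M+2]$, and the Calder\'on--Zygmund estimate on a $C^{2,\eta}$ domain gives $\|\nabla^2 u_0\|_{L^p}\le C_p\|F\|_{L^p}$, yielding an $O(\varep\log)$ contribution. For the $L^p$ norm of $w_\varep$ itself, the identities $\int_{\partial\Omega}u_\varep=\int_{\partial\Omega}u_0=0$ give $|\partial\Omega|^{-1}\int_{\partial\Omega} w_\varep = -|\partial\Omega|^{-1}\int_{\partial\Omega}(\Psi_\varep-P)\,\partial u_0$, which is $O(\varep\log)\|F\|_{L^p}$ by trace and $W^{2,p}$ bounds; combined with the Poincar\'e--Wirtinger inequality $\|w_\varep - \overline{w_\varep}_{\partial\Omega}\|_{L^p(\Omega)} \le C\|\nabla w_\varep\|_{L^p(\Omega)}$, this gives the $L^p$ part of the $W^{1,p}$ estimate.

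The main obstacle is the careful balance between the factor $\varep^{1-\rho}$ and the singularity $|x-y|^{\rho-d}$ appearing in Theorem \ref{theorem-4.2.3}: one must take $\rho$ strictly less than $1-t$, and the Schur-norm constant $C_\rho$ blows up as $\rho\to 0$, which is precisely the reason this method cannot reach $t=1$ and why the constant $C_t$ in (\ref{estimate-4.2.4}) degenerates as $t\to 1$. Every other piece is routine bookkeeping with estimates already established earlier in the paper.
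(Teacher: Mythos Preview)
Your proof is correct and follows essentially the same route as the paper: both read Theorem \ref{theorem-4.2.3} as a kernel bound, integrate against $F$, and choose $\rho<1-t$ to absorb the logarithmic factor into $\varep^t$. The paper's version is slightly leaner---it applies the bound of Theorem \ref{theorem-4.2.3} directly for all $x,y\in\Omega$ (no split at $|x-y|=\varep$ is needed, since that estimate already covers the near-diagonal region) and handles $\|w_\varep\|_{L^p}$ by invoking Theorem \ref{theorem-4.1.3} rather than Poincar\'e--Wirtinger---but the core argument is identical.
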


\begin{proof}
Since $\|\Psi_{\varep, j}^\beta-P_k^\beta\|_{L^\infty(\Omega)}
\le C\varep \ln [\varep^{-1}M +2]$ and $\|\nabla u_0\|_{L^p(\Omega)}+
\|\nabla^2 u_0\|_{L^p(\Omega)}\le C \| F\|_{L^p(\Omega)}$, 
in view of Theorem \ref{theorem-4.1.3},
it suffices to prove that
\begin{equation}\label{4.2.40}
\|\frac{\partial u^\alpha_\varep}{\partial x_i}
-\frac{\partial }{\partial x_i} \big\{ \Psi_{\varep, j}^\beta\big\}
\cdot \frac{\partial u_0^\beta}{\partial x_j} \|_{L^p(\Omega)}
\le C_t \, \varep^t \| F\|_{L^p(\Omega)}.
\end{equation}
We will prove that estimate (\ref{4.2.40}) holds
for any $1\le p\le \infty$.
To this end,
note that $u_\varep (x)=\int_\Omega N_\varep (x,y) F(y)\, dy$, 
by Theorem \ref{theorem-4.2.3},
\begin{equation}\label{4.2.41}
\big| 
\frac{\partial u^\alpha_\varep}{\partial x_i}
-\frac{\partial }{\partial x_i} \big\{ \Psi_{\varep, j}^\beta\big\}
\cdot \frac{\partial u_0^\beta}{\partial x_j}
\big|
\le C_\rho\, \varep^{1-\rho} \ln [\varep^{-1}M+2]
\int_\Omega \frac{|F(y)|\, dy}{|x-y|^{d-\rho}}
\end{equation}
for any $\rho\in (0,1)$.
Note that if $\rho<1-t$, then  $\varep^{1-\rho}\ln [\varep^{-1}M+2]
\le C \varep^{t}$. Estimate (\ref{4.2.40}) follows easily from (\ref{4.2.41}).
\end{proof}

\section{Leibniz Rules for the Dirichlet-to-Neumann map}

Let $\Lambda: H^{1/2}(\partial\Omega)
\to H^{-1/2}(\partial\Omega)$ denote the Dirichlet-to-Neumann map associated
with  $\mathcal{L}=-\Delta$. 
It is known that the estimate
$\|\Lambda (f)\|_{L^p(\partial\Omega)}
\le C_p \| f\|_{W^{1,p}(\partial\Omega)}$ holds
for $1<p<\infty$ if $\Omega$ is $C^1$ \cite{Fabes-1978}, 
and for $1<p<2+\delta$ if $\Omega$ is Lipschitz 
\cite{Jerison-Kenig-1980, Verchota-1984,
Dahlberg-Kenig-1987}.
The goal of this section is to prove the Leibniz estimates that
were used in subsection 3.3.
We remark that our basic line of argument extends to the general
case $\mathcal{L}=-\text{div}(A(x)\nabla)$. 
Indeed,
Theorem \ref{theorem-5.2}
continues to hold for 
if $A(x)$ satisfies conditions (\ref{ellipticity}),
$\|\nabla A\|_\infty\le C$ and $A^*=A$.
Under the same conditions on $A$, Theorem \ref{theorem-5.1} also holds in the 
scalar case $(m=1)$. If $m>1$, estimate (\ref{estimate-5.1}) holds
for $C^1$ domains.
The details will be given elsewhere.

\begin{thm}\label{theorem-5.1}
Let $\mathcal{L}=-\Delta$ and $\Omega$ be a bounded Lipschitz domain.
Then
\begin{equation}\label{estimate-5.1}
\|\Lambda (fg)-f \Lambda (g)\|_{L^2(\partial\Omega)}
\le C \| f\|_{H^1(\partial\Omega)} \| g\|_{L^\infty(\partial\Omega)}
\end{equation}
for any $f\in H^1(\partial\Omega)$ and $g\in L^\infty(\partial\Omega)$.
\end{thm}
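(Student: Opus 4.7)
The plan is to realize $\Lambda(fg)-f\Lambda(g)$ as the conormal derivative of an auxiliary function with zero boundary data, and then to control that conormal derivative by a Rellich-type identity.

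Let $F,u,U\in H^1(\Omega)$ denote the harmonic extensions of $f,g,fg$, respectively, and set $W=U-Fu$. Since $U$ and $Fu$ agree on $\partial\Omega$, we have $W\in H^1_0(\Omega)$. Because $\Delta F=\Delta u=0$ in $\Omega$,
\begin{equation*}
-\Delta W = 2\nabla F\cdot\nabla u = 2\,\mbox{div}(u\nabla F)\quad\text{in }\Omega,
\end{equation*}
and a direct computation on $\partial\Omega$, using $\nabla W=(\partial_\nu W)n$ there, yields the symmetric commutator identity
\begin{equation*}
\partial_\nu W = \Lambda(fg)-f\Lambda(g)-g\Lambda(f).
\end{equation*}
By the $L^2$ regularity theory for the Dirichlet problem for $\mathcal{L}=-\Delta$ on Lipschitz domains (Jerison--Kenig, Verchota), $\|\Lambda(f)\|_{L^2(\partial\Omega)}\le C\|f\|_{H^1(\partial\Omega)}$, so $\|g\Lambda(f)\|_{L^2(\partial\Omega)}\le C\|g\|_{L^\infty}\|f\|_{H^1(\partial\Omega)}$. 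Hence it suffices to prove $\|\partial_\nu W\|_{L^2(\partial\Omega)}\le C\|f\|_{H^1(\partial\Omega)}\|g\|_{L^\infty}$.

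To bound $\partial_\nu W$, I apply the Rellich identity: choose a smooth transverse vector field $\beta$ with $\beta\cdot n\ge c_0>0$ on $\partial\Omega$; multiplying $\Delta W$ by $\beta\cdot\nabla W$ and integrating by parts (using $W|_{\partial\Omega}=0$, so $|\nabla W|=|\partial_\nu W|$ on $\partial\Omega$) gives
\begin{equation*}
\tfrac{c_0}{2}\|\partial_\nu W\|_{L^2(\partial\Omega)}^2 \le \Bigl|\int_\Omega(\Delta W)(\beta\cdot\nabla W)\,dx\Bigr|+C\|\nabla W\|_{L^2(\Omega)}^2.
\end{equation*}
The second term is handled by the energy estimate combined with the divergence structure: testing $-\Delta W = -2\,\mbox{div}(u\nabla F)$ against $W$ yields $\|\nabla W\|_{L^2(\Omega)}\le C\|u\nabla F\|_{L^2(\Omega)}\le C\|g\|_{L^\infty}\|\nabla F\|_{L^2(\Omega)}\le C\|g\|_{L^\infty}\|f\|_{H^{1/2}(\partial\Omega)}$. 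For the main term $I=\int_\Omega(\nabla F\cdot\nabla u)(\beta\cdot\nabla W)\,dx$ I use $\nabla F\cdot\nabla u=\mbox{div}(u\nabla F)$ and integrate by parts:
\begin{equation*}
I = \int_{\partial\Omega} g\Lambda(f)\,(\beta\cdot n)\,\partial_\nu W\,d\sigma - \int_\Omega u\nabla F\cdot\nabla(\beta\cdot\nabla W)\,dx.
\end{equation*}
The boundary integral is dominated by $\|g\|_{L^\infty}\|\Lambda(f)\|_{L^2(\partial\Omega)}\|\partial_\nu W\|_{L^2(\partial\Omega)}$ and absorbed via Young's inequality together with the $L^2$ regularity bound on $\Lambda(f)$.

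The main obstacle is the remaining volume term $\int_\Omega u\nabla F\cdot\nabla(\beta\cdot\nabla W)$, which cannot be estimated by a naive Cauchy--Schwarz since neither $\nabla F$ nor $\nabla u$ is in $L^\infty$ and $W$ has no $H^2$ regularity on a merely Lipschitz $\Omega$. The remedy is a further integration by parts that moves a derivative off $W$: writing $u\nabla F\cdot\nabla(\beta\cdot\nabla W)=\mbox{div}(u\,(\beta\cdot\nabla W)\nabla F)-(\beta\cdot\nabla W)(\nabla u\cdot\nabla F)-u(\beta\cdot\nabla W)\Delta F$, the last term vanishes by harmonicity of $F$; the surface term produces another $\int_{\partial\Omega}g\Lambda(f)(\beta\cdot n)\partial_\nu W$ (absorbed as before); and the middle term equals $-I/2$, so $I$ is resolved by a linear algebra step that couples the two surface terms together. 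The residual interior pieces, which involve $u$, $\nabla F$, $\nabla W$, and $\nabla\beta$ only through $L^\infty\cdot L^2\cdot L^2$ products, are controlled by $C\|g\|_{L^\infty}^2\|f\|_{H^1(\partial\Omega)}^2$ using $\|u\|_{L^\infty(\Omega)}\le\|g\|_{L^\infty}$, the energy bound on $\nabla W$, and the $L^2(\partial\Omega)$-regularity estimate $\|\nabla F\|_{L^2(\Omega)}+\|(\nabla F)^*\|_{L^2(\partial\Omega)}\le C\|f\|_{H^1(\partial\Omega)}$, together with the Carleson measure property of $|\nabla F|^2\,\mbox{dist}(x,\partial\Omega)\,dx$ in Lipschitz domains. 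Combining with the trivial estimate for $g\Lambda(f)$ completes the proof.
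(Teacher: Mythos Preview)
Your setup is correct and matches the paper: with harmonic extensions $F,u,U$ of $f,g,fg$ and $W=U-Fu$, one has $\partial_\nu W=\Lambda(fg)-f\Lambda(g)-g\Lambda(f)$, and the term $g\Lambda(f)$ is harmless by the $L^2$ regularity $\|\Lambda(f)\|_{L^2}\le C\|f\|_{H^1}$. The Rellich identity and the energy bound $\|\nabla W\|_{L^2}\le C\|g\|_{L^\infty}\|f\|_{H^{1/2}}$ are also fine.

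The gap is in your treatment of $I=\int_\Omega(\nabla F\cdot\nabla u)(\beta\cdot\nabla W)\,dx$. Your two integrations by parts are circular. After the first you get $I=S-J$ with $S=\int_{\partial\Omega}g\Lambda(f)(\beta\cdot n)\partial_\nu W$ and $J=\int_\Omega u\nabla F\cdot\nabla(\beta\cdot\nabla W)$. Your product-rule identity for the integrand of $J$ is correct, but the ``middle term'' $(\beta\cdot\nabla W)(\nabla u\cdot\nabla F)$, once integrated, is \emph{exactly} $I$, not $I/2$; hence $J=S-I$ and you recover only the tautology $I=I$. No linear-algebra step rescues this. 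Moreover, your final appeal to ``the Carleson measure property of $|\nabla F|^2\,\delta(x)\,dx$'' is misplaced: that Carleson bound (Fabes--Neri) requires BMO boundary data, and here $F$ is the extension of $f\in H^1(\partial\Omega)$, for which the correct estimates are $\|(\nabla F)^*\|_{L^2}\le C\|f\|_{H^1}$ and $\int_\Omega|\nabla^2 F|^2\delta\le C\|f\|_{H^1}^2$, not a Carleson condition on $|\nabla F|^2\delta$.

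The paper avoids this difficulty by duality rather than Rellich. One tests $\partial_\nu W$ against an arbitrary harmonic $h$ with $(h)^*\in L^2(\partial\Omega)$ and reduces to the trilinear bound
\[
\Big|\int_\Omega \nabla F\cdot\nabla h\, u\,dx\Big|\le C\|f\|_{H^1(\partial\Omega)}\|g\|_{L^\infty(\partial\Omega)}\|h\|_{L^2(\partial\Omega)}.
\]
This is precisely the content of Dahlberg's bilinear estimate: since $h$ is harmonic with $(h)^*\in L^2$, the left side is controlled by $\|h\|_{L^2(\partial\Omega)}$ times the square root of $\|(u\nabla F)^*\|_{L^2}^2+\int_\Omega|\nabla(u\nabla F)|^2\delta$. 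The first piece and the cross term $\int_\Omega|\nabla^2 F|^2|u|^2\delta$ use the maximum principle $\|u\|_{L^\infty}\le\|g\|_{L^\infty}$ together with the regularity/square-function estimates for $F$, while the remaining piece $\int_\Omega|\nabla F|^2|\nabla u|^2\delta$ uses the Carleson measure property of $|\nabla u|^2\delta$ (valid because $u$ extends $g\in L^\infty\subset\mathrm{BMO}$) against $(\nabla F)^*\in L^2$. That is the missing ingredient your Rellich computation cannot produce on its own.
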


\begin{proof}
Let $w$ be the solution to the $L^2$ Dirichlet problem with data $fg$; i.e.,
 $\mathcal{L} (w)=0$ in $\Omega$, $w=fg$ on $\partial\Omega$
and $(w)^*\in L^2(\partial\Omega)$, where $(w)^*$ denotes the
nontangential maximal function of $w$.
Let $u$ and $v$ denote the solutions of the $L^2$ Dirichlet problem with 
boundary data $f$ and $g$ respectively.
Then
$$
\Lambda( fg) -f\Lambda(g)-g\Lambda (f)
=\frac{\partial}{\partial\nu} \big\{ w-uv\big\}.
$$
To estimate $\partial (w-uv)/\partial\nu$ in $L^2(\partial\Omega)$,
let $h$ be a function such that 
$\mathcal{L}(h)=0$ in $\Omega$ and $(h)^*\in L^2(\partial\Omega)$.
Since $w-uv=0$ on $\partial\Omega$ and
$$
\mathcal{L} (w-uv)=2\frac{\partial u}{\partial x_j}\frac{\partial v}
{\partial x_j}
=2\frac{\partial}{\partial x_j} \left\{ \frac{\partial u}{\partial x_j}
\cdot v \right\},
$$
by the Green's formula, we obtain
\begin{equation}\label{5.1.1}
\int_{\partial\Omega} 
\frac{\partial}{\partial\nu} \big\{ w-uv\big\} \cdot h
=2\int_\Omega \frac{\partial u}{\partial x_j}
\cdot v\cdot \frac{\partial h}{\partial x_j}
-2\int_{\partial\Omega} \Lambda(f) \cdot g\cdot h.
\end{equation}
Thus, by duality, it suffices to prove the following estimate
\begin{equation}\label{5.1.2}
\big|
\int_\Omega \frac{\partial u}{\partial x_j}
\cdot v\cdot \frac{\partial h}{\partial x_j}
\big|
\le C \| f\|_{H^1(\partial\Omega)} \| g\|_{L^\infty(\partial\Omega)}
\| h\|_{L^2(\partial\Omega)}.
\end{equation}

The trilinear estimate (\ref{5.1.2}) on Lipschitz domains
 is a consequence of 
a bilinear estimate due to B. Dahlberg \cite{Dahlberg-1986}
(also see \cite{Hofmann-2008} for related work).
Indeed, since $\Delta (h)=0$ in $\Omega$ and $(h)^*\in L^2(\partial\Omega)$,
it follows from estimate (1.7) in \cite{Dahlberg-1986} that
the left hand side of (\ref{5.1.2}) is bounded by
\begin{equation}\label{5.1.3}
C \| h\|_{L^2(\partial\Omega)}
\left\{
\int_{\partial\Omega}
|\big((\nabla u) v\big)^*|^2\, d\sigma
+\int_\Omega |\nabla^2 u|^2 |v|^2 \delta (x)\, dx
+\int_\Omega |\nabla u|^2 |\nabla v|^2 \delta (x)\, dx
\right\}^{1/2},
\end{equation}
where $\delta (x) =\text{dist}(x, \partial\Omega)$.
The first two integals in (\ref{5.1.3}) are bounded by
$C \| f\|^2_{H^1(\partial\Omega)} \| g\|^2_{L^\infty(\partial\Omega)}$.
This follows from  estimates $\|(\nabla u)^*\|_{L^2(\partial\Omega)}
\le C \| f\|_{H^1(\partial\Omega)}$ 
and $\int_\Omega |\nabla^2 u|^2 \delta (x)\, dx
\le C \| f\|^2_{H^1 (\partial\Omega)}$ \cite{Jerison-Kenig-1980, Dahlberg-1980} 
as well as
the maximum principle
$\|v\|_{L^\infty(\Omega)} \le \| g\|_{L^\infty(\partial\Omega)}$.

Finally, to handle the last integral in (\ref{5.1.3}), we use the fact
that $|\nabla v (x)|^2 \delta (x) dx$ is a Carleson measure
whose norm is less than $C\|g\|^2_{BMO(\partial\Omega)}$ \cite{Fabes-Neri-1980}.
This implies that the integral is bounded by
$C \|(\nabla u)^*\|^2_{L^2(\partial\Omega)} \|g\|^2_{L^\infty(\partial\Omega)}$.
Estimate (\ref{5.1.2}) now follows.
\end{proof}

If $\Omega$ is smooth, then $\Lambda$ is a pseudo-differential operator of
order one. In this case the $L^p$ boundedness of the
commutator $[\Lambda, x_i]$ in the next theorem is well known.

\begin{thm}\label{theorem-5.2}
Let $\mathcal{L}=-\Delta$ and $\Omega$ be a bounded $C^1$ domain.
Then
\begin{equation}\label{estimate-5.2}
\|\Lambda (f x_i)-x_i \Lambda (f)\|_{L^p(\partial\Omega)}
\le C_p \| f\|_{L^p(\partial\Omega)}
\end{equation}
for any $1<p<\infty$ and $f\in L^p(\partial\Omega)$.
If $\Omega$ is Lipschitz, the estimate (\ref{estimate-5.2}) holds for
$p=2$.
\end{thm}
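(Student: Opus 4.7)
The plan is to exploit the self-adjointness of $\Lambda$ on $L^2(\partial\Omega)$ (valid since $\mathcal{L} = -\Delta$) in order to rewrite the pairing against a test function in symmetric bilinear form. For smooth $f, h$ on $\partial\Omega$, three applications of self-adjointness give the key identity
\[
\int_{\partial\Omega}\bigl[\Lambda(x_if) - x_i\Lambda(f)\bigr]h\,d\sigma = \int_{\partial\Omega} x_i\bigl[f\,\Lambda(h) - h\,\Lambda(f)\bigr]\,d\sigma,
\]
using $\int\Lambda(x_if)h = \int (x_if)\Lambda h$, $\int x_i\Lambda(f)h = \int f\Lambda(x_ih)$, and $\int f\Lambda(x_ih) = \int (x_ih)\Lambda f$. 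By duality it therefore suffices to bound the right-hand side by $C\|f\|_{L^p(\partial\Omega)}\|h\|_{L^{p'}(\partial\Omega)}$.

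A useful reformulation: if $u, U$ denote the harmonic extensions of $f, h$, the integrand on $\partial\Omega$ equals $n\cdot V$ with $V := u\nabla U - U\nabla u$, which is divergence-free in $\Omega$ because both $u, U$ are harmonic. Applying the divergence theorem to $x_iV$ converts the bilinear form into an interior integral
\[
\int_{\partial\Omega} x_i(n\cdot V)\,d\sigma = \int_\Omega V_i\,dx = \int_\Omega(u\,\partial_iU - U\,\partial_iu)\,dx.
\]
The resulting integrand has no pointwise absolute bound (each term is only $O(1/\delta)$ near $\partial\Omega$, giving a logarithmic divergence), so any successful estimate must genuinely exploit the cancellation between the two pieces.

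To obtain the bound I would use the layer-potential representation $\Lambda = (-\tfrac{1}{2}I + K^*)\mathcal{S}^{-1}$, where $\mathcal{S}$ is the single-layer potential and $K^*$ the adjoint double-layer potential on $\partial\Omega$. The algebraic identity $[\mathcal{S}^{-1},x_i] = -\mathcal{S}^{-1}[\mathcal{S},x_i]\mathcal{S}^{-1}$ then gives
\[
[\Lambda, x_i] = [K^*, x_i]\mathcal{S}^{-1} - \Lambda\,\mathcal{S}^{-1}[\mathcal{S},x_i]\mathcal{S}^{-1}.
\]
The commutators $[\mathcal{S},x_i]$ and $[K^*,x_i]$ have integral kernels $(y_i - x_i)\Gamma(x-y)$ and $(y_i - x_i)\partial_{\nu(x)}\Gamma(x-y)$, of orders $|x-y|^{3-d}$ and $|x-y|^{2-d}$ respectively; the factor $y_i - x_i$ improves the singularity of $\mathcal{S}$ and $K^*$ by exactly one order. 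Hence $[K^*,x_i]\colon L^p\to L^p$ and $[\mathcal{S},x_i]\colon L^p\to W^{1,p}(\partial\Omega)$ are bounded by Schur-type estimates.

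Combining these bounds with the $L^p$-isomorphism $\mathcal{S}\colon L^p(\partial\Omega)\to W^{1,p}(\partial\Omega)$ (Fabes--Jodeit--Rivi\`ere for $C^1$ domains and $1 < p < \infty$; Verchota for Lipschitz domains at $p=2$), together with $\Lambda\colon W^{1,p}\to L^p$, will yield the desired commutator estimate. The hard part will be the careful function-space bookkeeping: for $f\in L^p\setminus W^{1,p}$ the intermediate $\mathcal{S}^{-1}f$ lies only in $W^{-1,p}$, so each individual factor in $\Lambda\,\mathcal{S}^{-1}[\mathcal{S},x_i]\mathcal{S}^{-1}$ is not separately $L^p$-bounded. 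The saving is that $[\mathcal{S},x_i]$ is regularizing by exactly one derivative, so the full composition returns to $L^p$; this realizes concretely the pseudodifferential principle that $\Lambda$ has order $1$ while $[\Lambda, x_i]$ has order $0$. For Lipschitz boundaries one additionally invokes the Coifman--McIntosh--Meyer $L^2$-bounds on singular Cauchy-type layer integrals to handle the case $p = 2$.
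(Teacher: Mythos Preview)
Your first reformulation via self-adjointness and the divergence theorem is essentially the paper's proof, and you abandoned it one step too early. The paper reaches the same point more directly: with $w,u$ the harmonic extensions of $fx_i$ and $f$, the function $w-x_iu$ vanishes on $\partial\Omega$ with $-\Delta(w-x_iu)=2\,\partial_i u$, so Green's formula against a harmonic $h$ gives
\[
\int_{\partial\Omega}\partial_\nu(w-x_iu)\,h\,d\sigma = -2\int_\Omega h\,\partial_i u\,dx.
\]
Your integral $\int_\Omega(u\,\partial_iU-U\,\partial_iu)\,dx$ reduces to exactly this after one integration by parts, plus the harmless boundary term $\int_{\partial\Omega}fhn_i\,d\sigma$. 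The point you did not invoke is that the \emph{single} term $\int_\Omega h\,\partial_i u\,dx$ is itself bounded by $C\|f\|_{L^p(\partial\Omega)}\|h\|_{L^{p'}(\partial\Omega)}$; this bilinear estimate is due to Dahlberg--Kenig--Verchota (the biharmonic paper) for $p=2$ on Lipschitz domains, and the same argument extends to $1<p<\infty$ on $C^1$ domains. So there is no logarithmic divergence to overcome once you stop insisting on a pointwise bound and quote this result.

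Your layer-potential alternative is a legitimate different route, but note an algebraic slip: from $[\mathcal{S}^{-1},x_i]=-\mathcal{S}^{-1}[\mathcal{S},x_i]\mathcal{S}^{-1}$ one gets
\[
[\Lambda,x_i]=[K^*,x_i]\,\mathcal{S}^{-1}-\Lambda\,[\mathcal{S},x_i]\,\mathcal{S}^{-1},
\]
with only one $\mathcal{S}^{-1}$ to the right of $\Lambda$, not two. With this correction the bookkeeping does close (the kernel of $[\mathcal{S},x_i]$ is $O(|x-y|^{3-d})$, a fractional integral of order $2$ on the $(d-1)$-dimensional boundary, so it carries $W^{-1,p}$ to $W^{1,p}$, after which $\Lambda$ lands in $L^p$), but this is considerably heavier than the paper's two-line reduction to the known bilinear estimate.
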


\begin{proof}
Let $\Omega$ be a bounded $C^1$ domain.
Let $w$ and $u$ be harmonic functions in $\Omega$ 
with boundary data $fx_i$ and $f$ respectively
such that $(w)^*, (u)^* \in L^p(\partial\Omega)$.
Then
$\Lambda(fx_i)-f\Lambda (x_i)-x_i \Lambda (f)
=\partial\{ w-x_i u\}/\partial\nu$.
Let $h$ be a harmonic function in $\Omega$ such that
$(h)^*\in L^{p^\prime}(\partial\Omega)$.
Since $w-x_i u=0$ on $\partial\Omega$ and
$\mathcal{L}(w-x_i u)=2 \frac{\partial u}{\partial x_i}$,
\begin{equation}\label{5.2.1}
\big| \int_{\partial\Omega} \frac{\partial}{\partial\nu}
\big\{ w-x_i u\big\} \cdot h\big|
 =2 \big|\int_\Omega \frac{\partial u}{\partial x_i} \cdot h\big|
\le C \| f\|_{L^p(\partial\Omega)}
\| h\|_{L^{p^\prime}(\partial\Omega)}.
\end{equation}
By duality this gives (\ref{estimate-5.2}).
We remark that the inequality in (\ref{5.2.1})
was proved in \cite{dkv-biharmonic} for $p=2$ in Lipschitz domains.
The same argument there also gives the inequality
for $1<p<\infty$ in $C^1$ domains.
Thus we may conclude that estimate (\ref{estimate-5.2})
holds for $1<p<\infty$ if $\Omega$ is $C^1$, and for $p=2$
if $\Omega$ is Lipschitz.
\end{proof}

\bibliography{kls3}

\small
\noindent\textsc{Department of Mathematics, 
University of Chicago, Chicago, IL 60637}\\
\emph{E-mail address}: \texttt{cek@math.uchicago.edu} \\

\noindent \textsc{Courant Institute of Mathematical Sciences, New York University, New York, NY 10012}\\
\emph{E-mail address}: \texttt{linf@cims.nyu.edu}\\

\noindent\textsc{Department of Mathematics, 
University of Kentucky, Lexington, KY 40506}\\
\emph{E-mail address}: \texttt{zshen2@uky.edu} \\

\noindent \today

\end{document}